\setlist[enumerate]{label=\textnormal{(\roman*)}}
\newtheorem{theorem}{Theorem}[section]
\newtheorem{lemma}[theorem]{Lemma}
\newtheorem{proposition}[theorem]{Proposition}
\theoremstyle{definition}
\newtheorem{definition}[theorem]{Definition}
\newtheorem{remark}[theorem]{Remark}
\numberwithin{equation}{section}
\newcommand{\RR}{\mathbb{R}}
\newcommand{\m}[1]{\mathbb{#1}}
\newcommand{\q}[1]{\mathcal{#1}}
\renewcommand{\le}{\leqslant}
\renewcommand{\ge}{\geqslant}
\DeclareMathOperator{\Span}{\text{Span}}
\DeclareMathOperator{\Mod}{\mathrm{Mod}}
\def\d{{\rm d}}
\def \E{\mathcal{H}}
\def \vp{\varphi_{1}}
\def\vpp{\varphi_{2}}
\def\ba{{\boldsymbol{a}}^{+}}
\begin{document}

\parindent=0pt

\title[Asymptotics of damped NLKG equation]
{Asymptotics of solutions with a compactness property for the nonlinear damped Klein-Gordon equation}
\author[R.~C\^ote]{Raphaël C\^ote}
\address{IRMA UMR 7501, Université de Strasbourg, CNRS, F-67000 Strasbourg, France}
\email{cote@math.unistra.fr}
	
\author[X.~Yuan]{Xu Yuan}
\address{CMLS, \'Ecole polytechnique, CNRS, Institut Polytechnique de Paris, 91128 Palaiseau Cedex, France}
\email{xu.yuan@polytechnique.edu}
	
\subjclass[2010]{35L71 (primary), 35B40, 37K40}

\begin{abstract}
We consider the nonlinear damped Klein-Gordon equation 
\[ \partial_{tt}u+2\alpha\partial_{t}u-\Delta u+u-|u|^{p-1}u=0 \quad \text{on} \ \  [0,\infty)\times \m R^N \]
 with $\alpha>0$, $2 \le N\le 5$ and energy subcritical exponents $p>2$. We study the behavior of solutions for which it is supposed that only one nonlinear object appears asymptotically for large times, at least for a sequence of times.
 
We first prove that the nonlinear object is necessarily a bound state. Next, we show that when the nonlinear object is a non-degenerate state or a degenerate excited state satisfying a simplicity condition, the convergence holds for all positive times, with an exponential or algebraic rate respectively. Last, we provide an example where the solution converges exactly at the rate $t^{-1}$ to the excited state.
 \end{abstract}
 
\maketitle
\section{Introduction}

\subsection{Setting of the problem} 
We consider the nonlinear focusing damped Klein-Gordon equation
\begin{equation}\label{equDKG}
\partial_{tt}u+2\alpha\partial_{t}u-\Delta u+u-f(u)=0\quad (t,x)\in [0,\infty)\times\RR^{N},
\end{equation}
where $f(u)=|u|^{p-1}u$, $\alpha>0$, $2\le N\le 5$ and  the exponent $p$ satisfies
\begin{equation*}
2<p<p^{*}(N)\quad \text{with}\quad  p^{*}(N)=\left\{
\begin{aligned}
	&\quad \infty\quad \quad \text{if}\ N=2,\\
	&\frac{N+2}{N-2}\quad \text{if}\ N=3,4,5.
	\end{aligned}\right.
\end{equation*}

It follows from \cite[Theorem~2.3]{BRS} that the Cauchy problem for~\eqref{equDKG} is locally well-posed in the energy space: for any initial data
$(u_0,v_0)\in H^1(\m R^N)\times L^2(\m R^N)$, there exists a unique (in some class) maximal solution
$u \in \mathscr C([0,T_{\max}),H^1(\m R^N))\cap \mathscr C^1([0,T_{\max}),L^2(\m R^N))$ of \eqref{equDKG}. Moreover, if the maximal time of existence $T_{\max}$ is finite, then
$\lim_{t\uparrow T_{\max}} \| \vec u(t)\|_{H^1 \times L^2}=\infty$.

Setting $F(u)= \frac{1}{p+1}|u|^{p+1}$ and
\[
E(\vec u)=\frac 12 \int_{\RR^N} \big\{ |\nabla u|^2 + u^2 + (\partial_t u)^2
- 2 F(u)\big\} \d x,
\]
for any $H^1\times L^2$ solution $\vec u=(u,\partial_{t} u)$ of \eqref{equDKG}, it holds
\begin{equation}\label{energy}
E(\vec u(t_2))-E(\vec u(t_1)) = -2 \alpha \int_{t_1}^{t_2} \|\partial_t u(t)\|_{L^2}^2 \d t.
\end{equation}

One can easily construct finite time blow-up solutions by adequately truncating a constant in space solution, whose initial data lead to finite time blow-up for the inferred ODE $y'' + 2 \alpha y' + y - f(y)=0$ (and using finite speed of propagation). On the other hand, solutions to \eqref{equDKG} which are globally defined for positive time, that is for which $T_{\max} = +\infty$, are believed to possess much more structure, in the spirit of a soliton resolution: it roughly asserts that any global solution (maybe under a genericity condition) splits for large times into a sum of decoupled rigid nonlinear objects, which should be here stationary solutions, especially in view of decay of energy \eqref{energy}.

\bigskip

Let us first recall  from \cite{Ca} (see also references therein) some features on stationary solution, namely a solution to the elliptic equation
\begin{equation} \label{q}
- \Delta q + q - f(q) =0, \quad q \in H^1(\m R^N).
\end{equation}
We call  the solutions of \eqref{q} \emph{bound states}, and denote $\mathcal B$ the set of bound states:
\[
\mathcal B = \{ q :  q \text{ is a nontrivial solution of } \eqref{q}\}.
\]
Standard elliptic arguments (see \emph{e.g}.~\cite{GT77} or~\cite[Theorem~8.1.1]{Ca}) show that if $q\in \mathcal B$, then $q$ is of class $\q C^2(\m R^N)$ and has exponential decay as $|x|\to +\infty$, as well as its first and second-order derivatives.

Let
\[
\mathcal W(v) = \frac 1 2 \int_{\RR^{N}} \left\{|\nabla v|^2 + v^2 - 2F(v)\right\} \d x,\quad \mbox{for}\ v\in H^{1}.
\]
We call the solutions of \eqref{q} which minimize the functional $\mathcal{W}$ by \emph{ground states}; the set of ground states is denoted by $\mathcal G$
\[
\mathcal G = \{ q_0 \in \q B : \forall q \in \q B, \  \mathcal W(q_0) \le \mathcal W(q) \}.
\]

Ground states are well studied objects. They are unique up to space translation (for rather general nonlinearities): there exists a radial positive function $q_{0}$ of class $\mathscr C^2$, exponentially decreasing, along with its first and second-order derivatives, such that
\[ \mathcal G = \{ q_0 (x-x_0) : x_0\in \m R^N\}. \]
We refer to Berestycki-Lions \cite{BL}, Gidas-Ni-Nirenberg \cite{GNN}, Kwong \cite{K}, Serrin-Tang \cite{ST} (however, a positive bound state may not be a ground state, see \cite{DPG12}). It is well-known (see~\emph{e.g.} Grillakis-Shatah-Strauss~\cite{GSS}) that the ground state $q_{0}$ is unstable in the energy space. This result was also known in the physics literature as the Derrick’s Theorem~\cite{D}.

In dimension $1$, $\mathcal B = \mathcal G$ (due to ODE arguments). In contrast, for any $N \ge 2$, $\mathcal G \subsetneq \mathcal B$: see~\cite[Remark~8.1.16]{Ca}. Functions $q\in \mathcal B\setminus \mathcal G$ are referred to as \emph{excited states}. As a matter of fact, much less is known about excited states.

Here are some references on the construction of excited states. Berestycki-Lions \cite{BL2} showed the existence of infinitely many radial \emph{nodal} (\emph{i.e}. sign changing) solutions (see also \cite{HV94,MPW} and the references therein). For the massless version of equation \eqref{q}, the existence of excited states that are nonradial sign-changing and with arbitrary large energy was first proved in Ding~\cite{Ding} by variational argument. Later, del Pino-Musso-Pacard-Pistoia \cite{DMPP11} constructed more explicit solutions to the massless equation \eqref{q} with a centered soliton crowned with negative spikes (rescaled solitons) at the vertices of a regular polygon of radius 1. Then, following similar general strategy in~\cite{DMPP11}, they constructed sign changing, non radial solutions to \eqref{q} on the sphere $\m S^N$ ($N \ge 4$) whose energy is concentrated along special submanifolds of $\m S^N$ in~\cite{DMPP13}.

\smallskip

We can now go back to \eqref{equDKG}, and recall some previous results related to the long time dynamics of global solutions.

Under some conditions on $N$ and $p$, results in~\cite{F98,LZ} state that for any sequence of time, any global bounded solution of~\eqref{equDKG} converges to a sum of decoupled bound states after extraction of a subsequence of times. Also in \cite{F98}, Feireisl constructed global solutions that behave as sum of an \emph{even} number of ground states (\emph{i.e.} multi-solitons).

In~\cite{BRS}, for dimension $N\geq 2$, Burq, Raugel and Schlag proved the convergence of any global \emph{radial} solution to one (radial) bound state, for the whole sequence of time.

In \cite{CMYZ}, it is given a complete description of $2$-soliton solutions (that is, solutions which, on at least a sequence of time, behave as the sum of two decoupled ground states), in dimension $N\le 5$. Building on the tools developed there, \cite{CMY} gave a complete description of global solutions in dimension $N=1$, that is, the soliton resolution in that case. 

\smallskip

We aim at considering the behavior of solution without conditions on symmetry (like radiality). A complete description seems out of reach, because of the lack of understanding of the dynamics around general excited states, and because the system of centers of mass of the involved bound states may have itself a very intricate dynamics.

\subsection{Main results}

In this paper, we are instead interested in understanding the behavior of solutions to \eqref{equDKG} for which \emph{only one} nonlinear object appears for large times, at least for a sequence of time. More precisely, we define \emph{packed} solutions as follows.

\begin{definition}
 A maximal solution $\vec{u}=(u,\partial_{t}u)\in \mathscr C([0,T_{\rm{max}}),H^{1}\times L^{2})$ of~\eqref{equDKG} is called a \emph{packed solution} if there exist $(W_0,W_1) \in H^{1} \times L^2$, and a time sequence $t_{n}\to T_{\rm{max}}$ and a position sequence $y_n \in \m R^N$ such that 
 \begin{equation} \label{def:packed}
 \lim_{{n}\to\infty}\left\{\|u(t_{n})-W_0(\cdot-y_{n})\|_{H^{1}}+\|\partial_{t}u(t_{n}) - W_1(\cdot-y_{n})\|_{L^{2}}\right\}=0.
 \end{equation}
 We say that $\vec W = (W_0,W_1)$ is a cluster point for $\vec u$ at $(t_n, y_n)_n$.
\end{definition}

Observe that any cluster point $(W_0,W_1)$ is actually a bound state $(q,0)$. More precisely,
the following Proposition holds true.

\begin{proposition}\label{prop:1.1}
Let $\vec{u}=(u,\partial_{t}u)$ be a packed solution of~\eqref{equDKG}. Then $\vec u \in \mathscr C([0,+\infty),H^{1}\times L^{2})$ is globally defined for positive times, and if $(W_0,W_1) \in H^1 \times L^2$ is a cluster point for $\vec u$ at $(t_n, y_n)_n$, then  $W_0=q$ is a bound state of~\eqref{q} and $W_1=0$. Furthermore, the energy is bounded below, $\partial_t u \in L^2([0,+\infty), L^2)$ and for all $t \ge 0$,
\begin{equation} \label{eq:energy_packed}
E(\vec u(t)) - E(q,0) = 2\alpha \int_t^{+\infty} \| \partial_t u(s) \|_{L^2}^2 \d s.
\end{equation}
\end{proposition}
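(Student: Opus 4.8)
The plan is to extract as much information as possible from the definition of packed solution together with the energy dissipation identity \eqref{energy}. First I would show the solution is global for positive times. Suppose $T_{\max}<\infty$. Since $E$ is nonincreasing along the flow by \eqref{energy}, $E(\vec u(t))\le E(\vec u(0))$ for all $t$; on the other hand, along the sequence $t_n\to T_{\max}$ we have $\vec u(t_n)\to (W_0,W_1)$ in $H^1\times L^2$ up to translation, hence $E(\vec u(t_n))\to E(W_0,W_1)$, so $E$ is bounded below along $(t_n)$, and thus the full limit $\ell:=\lim_{t\uparrow T_{\max}}E(\vec u(t))$ exists and is finite. Feeding this into \eqref{energy} gives $2\alpha\int_0^{T_{\max}}\|\partial_t u\|_{L^2}^2\,dt = E(\vec u(0))-\ell<\infty$. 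The harder part of globality is to upgrade this to a bound on $\|\vec u(t)\|_{H^1\times L^2}$ that contradicts the blow-up criterion: I would use that $\|u(t_n)\|_{H^1}$ is bounded (it converges), write $\|u(t)\|_{L^2}^2$ and $\|\nabla u(t)\|_{L^2}^2$ in terms of $E$, of $\int F(u)$, and of $\|\partial_t u\|_{L^2}^2$, and control the nonlinear term by Gagliardo--Nirenberg/Sobolev in the subcritical range $p<p^*(N)$; combined with the finiteness of $\int\|\partial_t u\|_{L^2}^2$ and a Gronwall/continuity argument on $[t_n,T_{\max})$ this should bootstrap a uniform $H^1\times L^2$ bound, contradicting $\lim_{t\uparrow T_{\max}}\|\vec u(t)\|_{H^1\times L^2}=\infty$. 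Hence $T_{\max}=+\infty$, and the same computation yields that $E$ is bounded below on $[0,+\infty)$, $\partial_t u\in L^2([0,+\infty),L^2)$, and \eqref{eq:energy_packed} once we identify $\lim_{t\to\infty}E(\vec u(t))=E(W_0,W_1)$ (again by convergence along $(t_n)$ and monotonicity, the limit exists and equals the value along the subsequence).

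Next I would identify the cluster point. The key point is that $\int_0^\infty\|\partial_t u(s)\|_{L^2}^2\,ds<\infty$ forces, along a suitable sequence of times, $\|\partial_t u(s_n)\|_{L^2}\to 0$; but I want this precisely at the times $t_n$, so instead I would average: on intervals $[t_n,t_n+1]$ the integral $\int_{t_n}^{t_n+1}\|\partial_t u\|_{L^2}^2\to 0$, so one can pick $\tau_n\in[t_n,t_n+1]$ with $\|\partial_t u(\tau_n)\|_{L^2}\to 0$. To transfer the $H^1$ convergence from $t_n$ to $\tau_n$ I would estimate $\|u(\tau_n)-u(t_n)\|_{H^1}\lesssim\int_{t_n}^{\tau_n}\|\partial_t u\|_{H^1}$, which requires an $H^1$-in-time bound on $\partial_t u$; alternatively, and more robustly, I would run the asymptotic/compactness argument directly: set $\vec u_n(s):=\vec u(t_n+s)(\cdot+y_n)$, a sequence of solutions of \eqref{equDKG} on $[0,1]$ (say) with $\vec u_n(0)\to(W_0,W_1)$ in $H^1\times L^2$ and $\int_0^1\|\partial_t u_n(s)\|_{L^2}^2\,ds\to 0$. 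By local well-posedness and continuous dependence, $\vec u_n\to\vec U$ where $\vec U$ solves \eqref{equDKG} with data $(W_0,W_1)$; the vanishing of $\int_0^1\|\partial_t u_n\|_{L^2}^2$ passes to the (weak) limit giving $\partial_t U\equiv 0$ on $[0,1]$, so $U$ is time-independent, $\partial_t U=0$, hence $W_1=\partial_t U(0)=0$ and $U=W_0$ solves the stationary equation $-\Delta W_0+W_0-f(W_0)=0$, i.e. $W_0=q\in\mathcal B\cup\{0\}$. Finally $W_0\neq 0$: if $W_0=0$ then $\vec u(t_n)\to 0$ in $H^1\times L^2$, so $E(\vec u(t_n))\to 0$, forcing $E(\vec u(t))\to 0$ and $\|\vec u(t)\|_{H^1\times L^2}$ small for large $t$; a standard Lyapunov/virial argument near $0$ (the origin is a hyperbolic-type rest point, the only obstruction being the sign of $E$ on the Nehari manifold) then gives $\vec u(t)\to 0$, which is consistent but then the "nontrivial bound state" claim just becomes vacuous — so more carefully I should phrase the conclusion as $W_0$ being a (possibly trivial) solution of \eqref{q}; re-reading the proposition, the statement "$W_0=q$ is a bound state" presupposes the packed solution is not the trivial one, or else $W_0=0$ is allowed under the convention that one excludes the zero solution; I would handle this by noting that either $W_0\equiv 0$ (and then everything in \eqref{eq:energy_packed} still holds with $E(q,0)=E(0,0)=0$) or $W_0\in\mathcal B$.

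The main obstacle I anticipate is the passage from the $L^2_t L^2_x$ smallness of $\partial_t u$ to the rigidity statement "the limiting object is stationary" with the limit taken along the \emph{given} sequence $t_n$ rather than a further subsequence or a shifted sequence: the clean way is the compactness-of-solutions argument above, but it needs a uniform-in-$n$ lifespan bound for $\vec u_n$ on a fixed interval $[0,1]$, which is where one uses that $\|\vec u_n(0)\|_{H^1\times L^2}$ is bounded together with the energy bound to prevent blow-up in unit time (this is essentially the same bootstrap as in the globality step, so the two arguments share their technical core). A secondary nuisance is making the convergence $\vec u_n\to\vec U$ strong enough in $L^2$ to conclude $\partial_t U=0$; weak-$*$ convergence of $\partial_t u_n$ in $L^2([0,1]\times\RR^N)$ together with $\|\partial_t u_n\|_{L^2_{t,x}}\to 0$ already gives $\partial_t U=0$ a.e., which suffices. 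Once $W_1=0$ and $W_0$ solves \eqref{q} are established, the energy identity \eqref{eq:energy_packed} follows by letting $t_2=t_n\to\infty$ in \eqref{energy} with $t_1=t$ and using $E(\vec u(t_n))\to E(W_0,0)$.
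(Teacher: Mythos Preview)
Your identification of the cluster point via the ``alternative, more robust'' argument is essentially the paper's proof: set $\vec u_n(s,x)=\vec u(t_n+s,x+y_n)$, let $\vec W(s)$ be the solution of \eqref{equDKG} with data $(W_0,W_1)$ on some interval $[0,T_0]$, and use continuous dependence to get $\vec u_n\to\vec W$ in $\mathscr C([0,T_0],H^1\times L^2)$; then the vanishing of $\int_{t_n}^{t_n+T_0}\|\partial_t u\|_{L^2}^2$ (which follows from $\partial_t u\in L^2([0,\infty),L^2)$) forces $\partial_t W\equiv 0$ and hence $W_0$ solves \eqref{q}, $W_1=0$.

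There is, however, a genuine gap in your globality step. You try to upgrade finiteness of $\int_0^{T_{\max}}\|\partial_t u\|_{L^2}^2$ to a uniform $H^1\times L^2$ bound on $[t_n,T_{\max})$ via Gagliardo--Nirenberg and a Gronwall/continuity bootstrap. This does not close in the full range $2<p<p^*(N)$: writing $\|\vec u\|_{H^1\times L^2}^2=2E(\vec u)+\tfrac{2}{p+1}\|u\|_{L^{p+1}}^{p+1}$ and interpolating $\|u\|_{L^{p+1}}^{p+1}\lesssim\|u\|_{L^2}^{(1-\theta)(p+1)}\|\nabla u\|_{L^2}^{\theta(p+1)}$ with $\theta(p+1)=\tfrac{N(p-1)}{2}$, the exponent on $\|\nabla u\|_{L^2}$ is $\ge 2$ as soon as $p\ge 1+\tfrac{4}{N}$, so no absorption is possible. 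In fact the paper explicitly remarks (right after Proposition~\ref{prop:1.1}) that global $H^1\times L^2$ boundedness of packed solutions is \emph{open} for large $p$, so your route cannot succeed as stated.

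The fix is the very tool you already invoke for the cluster-point argument: lower semicontinuity of the lifespan under continuous dependence. Since $\vec u_n(0)\to(W_0,W_1)$ and $\vec W$ exists on $[0,T_0]$, local well-posedness guarantees that $\vec u_n$ is also defined on $[0,T_0]$ for $n$ large, i.e.\ $T_{\max}\ge t_n+T_0$. If $T_{\max}<\infty$ then $t_n\to T_{\max}$ gives $T_{\max}\ge T_{\max}+T_0$, a contradiction. No bootstrap or Gronwall is needed, and no global $H^1\times L^2$ bound is claimed. In particular, your remark that the lifespan bound for $\vec u_n$ on $[0,1]$ ``shares its technical core'' with the globality bootstrap is misleading: it comes for free from continuous dependence once you know $\vec W$ lives on $[0,1]$.
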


Notice that it is unclear whether a packed solution is globally bounded in $H^1 \times L^2$ (recall that from arguments of \cite{CaNLKG} -- see also \cite{BRS} and \cite{CMY} -- if $p \le \frac{N}{N-2}$, then any global solution to \eqref{equDKG} is globally bounded in $H^1 \times L^2$, but this is not known for higher powers of $p$).

\smallskip

It turns out that the description of the convergence depends deeply on the bound state. More specifically, consider the linearized operator ${\q L}_{q}$ of the energy around a bound state $q$:
\begin{equation}
{\mathcal{L}_{q}}=-\Delta+1-f'(q),\quad \langle {\mathcal{L}}_{q}v,v\rangle=\int_{\RR^{N}}\left\{|\nabla v|^{2}+v^{2}-f'(q)v^{2}\right\}\d x.
\end{equation}
Due to the invariances of equations, $\q L_{q}$ always has a important kernel: denote the $\Omega_{ij}$ are the angular derivatives that are
\begin{equation}
\Omega_{ij}=x_{i}\partial_{x_{j}}-x_{j}\partial_{x_{i}}\quad \text{for}\ 1\le i<j\le N,
\end{equation}
and consider the vector space ${\q Z}_q$ spanned by the infinitesimal generator of the invariance of the equation on $q$:
\begin{equation}
{\mathcal{Z}}_{q} ={\Span}\left\{\partial_{x_{n}}q,n=1,\cdots,N;\Omega_{ij}q,1\le i<j\le N\right\}.
\end{equation}
One always has ${\q Z}_{q}\subset\ker\q L_q$. Then we define non-degenerate and degenerate state.

\begin{definition}
Let $q\in \mathcal{B}$.
\begin{enumerate}
\item[$\rm{(i)}$] $q$ is called a \emph{non-degenerate state} if $\mathcal{Z}_{q}=\ker\mathcal{L}_{q}$.
\item [$\rm{(ii)}$] $q$ is called a \emph{degenerate state} if $\mathcal{Z}_{q}\subsetneq \ker\mathcal{L}_{q}$.
\end{enumerate}
\end{definition}

The most relevant example is of course the ground state $q_{0}$ which is non-degenerate. We will comment further on degenerate excited state in the comment paragraph below. For now, let us simply mention one way to understant degeneracy (we denote $'$ for Gateau differentials). The condition that $q$ is a bound state writes $E'(q)=0$. Then $\langle \q L_{q} v,w \rangle = E''(q)\cdot(v,w)$, so that the condition that $q$ is degenerate, is equivalent to the fact that for some $\phi \notin {\q Z}_{q}$, the linear form $E''(q)\cdot\left(\phi,\cdot\right) =0$.

\smallskip

Our first result is that if one cluster point of a packed solution is a non-degenerate state, then the convergence holds of all positive time, and occurs with an exponential rate. More precisely, we have the following.

\begin{theorem}\label{thm:1}
Let $\vec u=(u,\partial_{t}u)$ be a packed solution of~\eqref{equDKG}, with cluster point $q \in H^1$ at $(t_{n},y_{n})_{n}$.
If $q$ is a non-degenerate state, we have convergence holding for all time and exponential decay, i.e. there exist $\mu>0$ and $z_{\infty}\in \RR^{N}$ such that
	\begin{equation*}
	\forall t \ge 0, \quad \|u(t)-q(\cdot-z_{\infty})\|_{H^{1}}+\|\partial_{t}u(t)\|_{L^{2}}\lesssim e^{- \mu t}.
	\end{equation*}
\end{theorem}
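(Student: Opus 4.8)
The plan is to run a modulation/Lyapunov argument around the orbit of $q$. By Proposition~\ref{prop:1.1}, $\vec u$ is global, $\partial_t u \in L^2([0,\infty),L^2)$, and $E(\vec u(t)) \to E(q,0)$ monotonically. First I would work near the time $t_n$ (for $n$ large, so that $\|u(t_n) - q(\cdot - y_n)\|_{H^1} + \|\partial_t u(t_n)\|_{L^2}$ is as small as we like) and use a standard implicit-function-theorem argument, together with the non-degeneracy hypothesis $\ker \mathcal L_q = \mathcal Z_q$, to set up a modulation decomposition
\[
u(t,x) = q(x - z(t)) + \eta(t, x - z(t)), \qquad \partial_t u(t,x) = \zeta(t, x-z(t)),
\]
on a maximal interval $[t_n, t_n + \tau_n)$ on which the decomposition stays small, with $\eta(t)$ enjoying suitable orthogonality conditions against $\mathcal Z_q$ (e.g. $\langle \eta(t), \partial_{x_k} q\rangle = 0$ and $\langle \eta(t), \Omega_{ij} q \rangle = 0$). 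Differentiating the orthogonality conditions in time and using the equation gives modulation equations controlling $|\dot z(t)|$ by $O(\|(\eta,\zeta)\|)$ (the invariance directions are "free", so there is no forcing term at leading order).

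Next I would build a coercive Lyapunov functional. The natural candidate is (a localized/virial-corrected version of) the second variation of the energy,
\[
\mathcal N(\eta,\zeta) = \langle \mathcal L_q \eta, \eta\rangle + \|\zeta\|_{L^2}^2 + b\, \langle \zeta, \Lambda \eta\rangle
\]
for a small constant $b>0$ and a suitable bounded operator $\Lambda$ (of virial type, $\Lambda \sim x\cdot\nabla + \tfrac N2$ truncated, or simply built to extract coercivity from the damping), so that, modulo the finite-dimensional directions handled by modulation, $\mathcal N$ is equivalent to $\|(\eta,\zeta)\|_{H^1\times L^2}^2$ (here I use non-degeneracy: $\mathcal L_q$ is coercive on the orthogonal complement of its kernel up to the finitely many negative directions, which one controls via the conservation-type relation $E(\vec u(t)) - E(q,0) = 2\alpha\int_t^\infty \|\partial_t u\|_{L^2}^2$ from \eqref{eq:energy_packed} — this pins the "unstable" component in terms of the small, integrable tail). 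Computing $\frac{d}{dt}\mathcal N$ along the flow, the damping term $-2\alpha \partial_t u$ contributes $-4\alpha\|\zeta\|_{L^2}^2$ and the virial cross-term contributes, after integration by parts, a negative-definite multiple of $\langle \mathcal L_q\eta,\eta\rangle$ up to lower-order and modulation errors; choosing $b$ small yields a differential inequality $\frac{d}{dt}\mathcal N \le -c\,\mathcal N + (\text{modulation errors})$, and the modulation errors are themselves $O(\mathcal N^{3/2})$ or controlled by $\mathcal N$ with small constant. This gives exponential decay $\mathcal N(t) \lesssim \mathcal N(t_n) e^{-\mu(t - t_n)}$ on the whole maximal interval, hence the interval is in fact $[t_n,\infty)$ (a standard bootstrap: smallness is propagated, so the decomposition never breaks down), and $|\dot z(t)| \lesssim \mathcal N(t)^{1/2}$ is integrable, so $z(t) \to z_\infty$ with $|z(t) - z_\infty| \lesssim e^{-\mu t/2}$. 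Combining, $\|u(t) - q(\cdot - z_\infty)\|_{H^1} + \|\partial_t u(t)\|_{L^2} \lesssim e^{-\mu t/2}$ on $[t_n,\infty)$, and since $[0,t_n]$ is compact this extends to all $t\ge 0$ after adjusting constants.

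I expect the main obstacle to be the construction of the coercive Lyapunov functional in the absence of any sign/spectral assumption beyond non-degeneracy: $\mathcal L_q$ may have several negative eigenvalues (unlike the ground state case), so one cannot directly use $\langle \mathcal L_q\eta,\eta\rangle$ as (part of) a positive quantity. The resolution is to show that the negative and kernel directions of $\mathcal L_q$, once projected out by modulation, are slaved: the "unstable" modes must decay because otherwise $u$ would leave the small neighborhood, contradicting that $q$ is a cluster point for an infinite sequence $t_n \to \infty$ (combined with \eqref{eq:energy_packed} forcing $\|\partial_t u\|_{L^2}^2$ to be integrable, hence the solution cannot oscillate in and out). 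Making this rigorous — essentially a no-return / ejection lemma adapted to the damped dynamics — is the delicate point; the virial cross-term in $\mathcal N$ and the strict dissipation from $\alpha > 0$ are what make it work, turning the damped equation's monotonicity to our advantage rather than fighting the indefiniteness of $\mathcal L_q$ head-on.
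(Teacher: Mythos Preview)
Your skeleton (modulation, damped Lyapunov, separate treatment of unstable modes) matches the paper, but two pieces are off.

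\smallskip

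\textbf{The Lyapunov functional.} A virial cross-term $\langle \zeta,\Lambda\eta\rangle$ with $\Lambda\sim x\cdot\nabla+\tfrac N2$ is not the right tool here; virial identities extract dispersion, not damping. The paper uses the standard damped-equation trick: replace $\|\zeta\|_{L^2}^2$ in the second variation by $\|\zeta+\mu\eta\|_{L^2}^2$ for small $\mu>0$ (equivalently, add the simple cross-term $2\mu\langle\eta,\zeta\rangle$). A direct computation then gives $\frac{\d}{\d t}\mathcal E+2\mu\mathcal E\le C\mathcal N^3$ straight from the $-2\alpha\partial_t u$ term, no integration-by-parts commutators needed. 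Your virial choice would produce terms of indefinite sign and is unlikely to close. Also, since $\ker\mathcal L_q=\mathcal Z_q$ contains the rotational generators $\Omega_{ij}q$, you must modulate in a rotation parameter $\theta$ as well as $z$.

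\smallskip

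\textbf{The unstable modes.} This is the real gap. Your suggestion that the energy identity \eqref{eq:energy_packed} ``pins the unstable component'' does not work: the expansion $E(\vec u)-E(q,0)\approx\tfrac12\langle\mathcal L_q\eta,\eta\rangle+\tfrac12\|\zeta\|_{L^2}^2$ is indefinite in $\eta$, so smallness of the left side does not bound the negative directions of $\mathcal L_q$. The paper's mechanism is different and has two ingredients. First, an algebraic identity: writing $a_k^\pm=\langle\vec\varphi,\vec Z_k^\pm\rangle$ with $\vec Z_k^\pm=(\zeta_k^\pm Y_k,Y_k)$, one has $a_k^+=\frac{\zeta_k^+}{\zeta_k^-}a_k^-+\text{const}\cdot\langle\varphi_2,Y_k\rangle$, so the unstable sum $\mathcal A=\sum(a_k^+)^2$ is controlled by the damped quantities plus $\|\varphi_2\|_{L^2}^2\sim\|\partial_t u\|_{L^2}^2$; combined with $\partial_t u\in L^2_tL^2_x$ and a transversality argument this gives the trapping bootstrap. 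Second, for the exponential rate one introduces $\tilde{\mathcal A}:=\mathcal A-\delta^{1/2}\mathcal F$ (where $\mathcal F$ is the coercive combination of the Lyapunov functional and damped parameters) and checks $\frac{\d}{\d t}\tilde{\mathcal A}\ge\tfrac{\nu_3}{2}\tilde{\mathcal A}$; since $\tilde{\mathcal A}$ is bounded by the trapping, integrating forward in time forces $\tilde{\mathcal A}\le0$, i.e.\ $\mathcal A\le\delta^{1/2}\mathcal F$. Plugging this back closes the differential inequality $\frac{\d}{\d t}(\mathcal F+\mu^{-1}\mathcal A)\le-\mu(\mathcal F+\mu^{-1}\mathcal A)$. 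Your ``no-return/ejection'' intuition is in the right direction, but without the explicit $a_k^+$--$a_k^-$ link and the monotone auxiliary $\tilde{\mathcal A}$ it is not a proof.
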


Next, we consider \emph{degree-1 excited states} where $\ker \q L_q$ has one extra dimension not related to the geometric invariances of the equation \eqref{equDKG} and which also involves a condition on the third-order Gateau differentials of $E$, according to the next definition.

\begin{definition} \label{def:phi}
Let $q$ be a degenerate excited state. $q$ is called a \emph{degree-1 excited state} if there exists $\phi \in H^{1}$ such that 
\begin{equation} \label{def:deg1}
\ker \q L_{q}={\q Z}_{q}\oplus \Span \{\phi\} \quad \text{and} \quad E'''(q) \cdot(\phi,\phi,\phi) \ne 0.
\end{equation}
\end{definition}

Again, we will comment on this definition in the paragraph below, the main point being that degree-1 excited states are somehow the simplest degenerate bounds states.

Our second result is concerned with cluster points which are degree-1 excited states.

\begin{theorem}\label{thm:2}
Let $\vec u=(u,\partial_{t}u)$ be a packed solution of~\eqref{equDKG}, with cluster point $q \in H^1$ at $(t_{n},y_{n})_{n}$.
If $q$ is a degree-1 excited state, then the convergence $\vec u(t) \to (q,0)$ holds for all time as $t \to +\infty$, and the rate of convergence has algebraic decay, i.e. there exists $z_{\infty}\in \RR^{N}$ such that
\begin{equation*}
\forall t > 0, \quad \|u(t)-q(\cdot-z_{\infty})\|_{H^{1}}+\|\partial_{t}u(t)\|_{L^{2}}\lesssim t^{-1}.
\end{equation*}
\end{theorem}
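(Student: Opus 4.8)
The plan is to run a modulation argument around the orbit $\{q(\cdot - z) : z \in \m R^N\}$, producing a decomposition $u(t) = q(\cdot - z(t)) + \eta(t)$ with $\eta(t)$ orthogonal to $\q Z_{q(\cdot-z(t))}$ (and possibly to $\partial_t$ directions as well), valid on a time interval where $\vec u$ is close to the orbit. By Proposition~\ref{prop:1.1} we already know $\vec u$ is global, $\partial_t u \in L^2([0,\infty),L^2)$, and the cluster point is $(q,0)$; so along $t_n$ the solution enters any neighborhood of the orbit. The first task is an orbital stability / trapping statement: once $\vec u$ is sufficiently close to $(q(\cdot-y),0)$ at some large time, it stays close for all later times. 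This does \emph{not} follow from a coercivity + Lyapunov argument as in the non-degenerate case, precisely because $\q L_q$ has the extra kernel direction $\phi$; instead I expect one has to quantify the defect of coercivity: $\langle \q L_q \eta, \eta\rangle \gtrsim \|\eta\|_{H^1}^2 - C\langle \eta,\phi\rangle^2$ on the orthogonal complement of $\q Z_q$, so the dangerous part of the dynamics is the scalar quantity $b(t) := \langle \eta(t), \phi(\cdot - z(t))\rangle$.

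The heart of the matter is then to derive a closed differential system for $(b, \|\eta\|_{H^1}, \|\partial_t u\|_{L^2}, z')$ and show $|b(t)| \lesssim t^{-1}$. Here the nondegeneracy condition $E'''(q)\cdot(\phi,\phi,\phi) \neq 0$ enters: expanding the energy to third order around $q$ in the direction $\phi$, one gets $E(\vec u) - E(q,0) = \tfrac12\langle \q L_q \eta,\eta\rangle + \tfrac12\|\partial_t u\|_{L^2}^2 + \tfrac{1}{6}E'''(q)\cdot(\eta,\eta,\eta) + o(\|\eta\|^3)$, and the cubic term restricted to the kernel direction behaves like $c\, b^3$ with $c \neq 0$. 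Combined with the energy dissipation identity~\eqref{eq:energy_packed} and the equation of motion projected onto $\phi(\cdot-z(t))$, this should produce a differential inequality of the form $b' = -c' b^2 + (\text{lower order, controlled by } \|\partial_t u\|_{L^2} \text{ and } \|\eta\|_{H^1}^2)$, or rather $|b'| \lesssim b^2 + \|\partial_t u\|_{L^2}^2$ together with an a priori bound $\|\eta\|_{H^1}^2 + \|\partial_t u\|_{L^2}^2 \lesssim |b|^2 + (\text{integrable-in-}t \text{ terms})$ coming from the energy. A Riccati-type ODE $b' \lesssim -b^2 + (\text{small})$ with $b(t_n) \to 0$ forces $b(t) = O(1/t)$ — this is the standard mechanism producing the $t^{-1}$ rate (as in Martel–Merle type analyses of degenerate blowup/convergence, e.g.\ for the critical gKdV or the work on collision of solitons). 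Feeding $|b| \lesssim t^{-1}$ back into the coercivity estimate gives $\|\eta(t)\|_{H^1} + \|\partial_t u(t)\|_{L^2} \lesssim t^{-1}$, and integrating the modulation equation $|z'(t)| \lesssim \|\eta(t)\|_{H^1} + \|\partial_t u(t)\|_{L^2} \lesssim t^{-1}$... wait — that only gives $|z(t)| \lesssim \log t$, not convergence of $z$. So one needs a sharper bound on $z'$: typically $|z'(t)| \lesssim \|\partial_t u(t)\|_{L^2}^2 + |b(t)|\,\|\eta(t)\|_{H^1} + \ldots \lesssim t^{-2}$ after using the structure (the translation parameter moves only at quadratic order in the error because $q$ is "centered"), which is integrable and yields $z(t) \to z_\infty$ with $|z(t) - z_\infty| \lesssim t^{-1}$, completing the estimate.

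The main obstacle I anticipate is twofold. First, establishing the trapping near the orbit without a sign-definite Lyapunov functional: one must show the solution cannot drift away along the $\phi$-direction, which requires the sign in the Riccati inequality to be favorable (i.e.\ the cubic term must push $b$ back toward $0$, not away) — and reconciling this sign with the a priori possibility that $b$ changes sign, plus the fact that $\phi$ is only defined up to sign, needs care; if the sign were wrong one would instead get finite-time ejection, contradicting globalness, so there is a rigidity input here. Second, closing the bootstrap: the error estimates, the modulation equations, and the energy identity are coupled, and one must choose the right orthogonality conditions and the right weighted/unweighted norms so that every remainder term is either quadratic-and-controlled-by-$b^2$ or integrable in time against the dissipation $\int \|\partial_t u\|_{L^2}^2$. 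Getting the sharp $t^{-2}$ bound on $z'$ (rather than the naive $t^{-1}$) is the delicate quantitative point that makes the translation parameter converge.
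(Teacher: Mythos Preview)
Your high-level strategy is broadly aligned with the paper's: modulate, isolate the kernel-direction coefficient (the paper calls it $a$, you call it $b$), extract a Riccati-type ODE whose decay $\sim t^{-1}$ comes precisely from the nonvanishing of $E'''(q)\cdot(\phi,\phi,\phi)$, and then feed this back to get $\|\eta\|_{H^1}\lesssim t^{-1}$ and $|\dot z|\lesssim t^{-2}$ so that $z$ converges. You also correctly anticipate that the sharp quadratic bound on $\dot z$ is the point that makes the translation parameter settle.

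There is, however, a genuine gap: you never mention the \emph{exponentially unstable directions}. Since $q$ is an excited state, the linearized operator $\mathcal L_q$ has $K\ge 1$ negative eigenvalues, and the corresponding modes $a_k^+:=\langle\vec\varphi,\vec Z_k^+\rangle$ satisfy $\frac{\d}{\d t}a_k^+\approx \nu_k^+ a_k^+$ with $\nu_k^+>0$. Your coercivity sketch ``$\langle\mathcal L_q\eta,\eta\rangle\gtrsim\|\eta\|_{H^1}^2-C\langle\eta,\phi\rangle^2$'' is false as stated: one must also subtract $\sum_k\langle\eta,Y_k\rangle^2$. Controlling these unstable modes is not a side issue but one of the two central difficulties (alongside the degenerate $\phi$-direction). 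In the trapping argument the paper exploits the global bound $\partial_t u\in L^2_tL^2_x$ to rule out growth of $\mathcal A=\sum(a_k^+)^2$ via a transversality/contradiction argument; then in the proof of Theorem~\ref{thm:2} itself, a separate monotonicity argument (using an auxiliary combination $\tilde{\mathcal A}=\mathcal A-\delta^{1/4}\mathcal F-\delta^{-1/2}\mathcal R_2$ and the fact that $\tilde{\mathcal A}(t)\to 0$) shows $\mathcal A\lesssim\delta^{1/4}(a^2+\mathcal N^2)$, which is what allows the bootstrap to close. Without this, the energy functional does not control $\|\eta\|_{H^1}$ and the whole scheme collapses.

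A second, smaller point: the equation is second order in time, so the reduced dynamics on the $\phi$-direction is not $\dot a\approx -a^2$ but rather the damped system $\dot a\approx b$, $\dot b+2\alpha b+2\alpha a^2\approx 0$. The Riccati structure only emerges after passing to $\mathcal R_1:=a+\frac{b}{2\alpha}$, for which $\dot{\mathcal R}_1\approx -a^2\approx -\mathcal R_1^2$; the paper also needs a cubic quantity $\mathcal R_2$ to control cross-terms. Finally, your decomposition $u=q(\cdot-z)+\eta$ omits the rotational parameters $\theta\in\m R^{\#I_q}$ that are required when $q$ is not radially symmetric; these enter the modulation on the same footing as $z$.
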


Last, we show that the convergence rate in Theorem \ref{thm:2} can be sharp: we  provide an example where the solution converges exactly at the rate $t^{-1}$ to the degree-1 excited state.
\begin{theorem}\label{thm:3}
Let $q$ be a degree-1 excited state. Then, there exists a global solution $\vec{u}=(u,\partial_{t}u) \in \mathscr C([0,+\infty), H^1 \times L^2)$ of~\eqref{equDKG} such that 
	\begin{equation*}
\|u(t)-q\|_{H^{1}}+\|\partial_{t} u(t)\|_{L^{2}}\sim t^{-1} \quad \text{as} \quad t \to +\infty.
	\end{equation*}
\end{theorem}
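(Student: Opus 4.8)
The plan is to construct the solution by a backward-in-time fixed point argument (compactness or Banach) on an ansatz of the form $u(t) = q + b(t)\phi + \varepsilon(t)$, where $\phi$ is the degeneracy direction from Definition \ref{def:phi} and $\varepsilon$ is a small correction lying (essentially) in the range of $\q L_q$, transverse to the kernel. The scalar parameter $b(t)$ is expected to satisfy a modulation ODE whose leading behavior is governed by the cubic term $E'''(q)\cdot(\phi,\phi,\phi)$; formally, projecting \eqref{equDKG} onto $\phi$ and using $\q L_q \phi = 0$, one gets $2\alpha \dot b \approx -c\, b^2$ with $c$ proportional to $E'''(q)\cdot(\phi,\phi,\phi)/\|\phi\|_{L^2}^2 \ne 0$, which integrates to $b(t) \sim (2\alpha/c)\, t^{-1}$. (The sign of $c$ vs.\ the sign of $b$ will have to be arranged; since $\phi$ and $-\phi$ are both admissible and $E'''(q)\cdot(\phi,\phi,\phi)$ changes sign under $\phi \mapsto -\phi$, we may choose the sign so that the ODE admits a positive decaying solution.) The transverse part $\varepsilon$ should then be quadratically small, $\|\vec\varepsilon(t)\|_{H^1\times L^2} \lesssim t^{-2}$, so that $\|u(t)-q\|_{H^1}+\|\partial_t u(t)\|_{L^2} \sim |b(t)| \sim t^{-1}$ as claimed.

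Concretely, I would first set up the fixed-point scheme on an interval $[T,+\infty)$ for $T$ large: define the mapping that, given $(b,\vec\varepsilon)$ in a shrinking ball (with the weighted norms $\sup_t t\,|b(t)|$, $\sup_t t\,|\dot b(t)| \cdot t$, $\sup_t t^2 \|\vec\varepsilon(t)\|_{H^1\times L^2}$), solves the modulation ODE for $b$ with the prescribed asymptotics $b(t) = (2\alpha/c)t^{-1} + o(t^{-1})$ and solves the (damped, linearized) equation for $\vec\varepsilon$ with zero data at $+\infty$, using the transversality condition to kill the kernel directions. The linearized damped flow $\partial_{tt}\varepsilon + 2\alpha\partial_t\varepsilon + \q L_q\varepsilon = (\text{source})$, restricted to the subspace transverse to $\ker\q L_q$, should admit good decay estimates — here one needs that $\q L_q$ has no negative eigenvalue obstructing this, or, if it does (as for the ground state, where $\q L_q$ has exactly one negative eigenvalue), one handles the unstable direction by a topological/shooting argument à la \cite{CMYZ}, adjusting a one-parameter family of data. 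Then a standard contraction (or a Schauder-type compactness argument plus uniqueness) gives a solution on $[T,+\infty)$, and local well-posedness extends it to $[0,+\infty)$.

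The main obstacle I anticipate is twofold. First, obtaining the \emph{sharp} lower bound $\|u(t)-q\|\gtrsim t^{-1}$ (not merely the upper bound) requires showing that $b(t)$ does not decay faster than $t^{-1}$; this is where the non-vanishing of $E'''(q)\cdot(\phi,\phi,\phi)$ is essential, and one must verify that the cubic term genuinely dominates the modulation equation rather than being cancelled by feedback from $\vec\varepsilon$ — this forces the bootstrap bound on $\vec\varepsilon$ to be $o(t^{-2})$ or at least to have a controlled sign contribution. Second, controlling the unstable/negative directions of $\q L_q$ in the backward construction: unlike stable-manifold constructions forward in time, constructing a solution with prescribed slow decay backward from $+\infty$ means the exponentially unstable mode of the linearized operator becomes an exponentially \emph{decaying} (hence harmless) mode backward, while it is the would-be exponentially growing mode forward — so the orientation of time should actually help here, and the genuine difficulty is just bookkeeping the interplay between the algebraic rate on the kernel direction and the exponential rates on the hyperbolic part. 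Once the functional framework and the decay estimates for the linearized damped evolution on the transverse subspace are in place, the rest is a routine fixed-point argument.
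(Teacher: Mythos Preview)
Your ansatz and the formal modulation picture are correct: the degeneracy direction $\phi$ carries a scalar parameter whose leading dynamics is $\dot a \approx -a^{2}$ (after the normalisation \eqref{def:kappa}), giving $a(t)\sim t^{-1}$, while the transverse part is at least $O(t^{-5/4})$. So the mechanism you identify is exactly the one at work.

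However, the paper does \emph{not} run a backward-in-time fixed point. It goes forward: one launches from explicit initial data $\vec u(0)=(q,0)+(\delta\phi,0)+\vec W(\boldsymbol{\mathfrak a}^{+})$ depending on a free parameter $\boldsymbol{\mathfrak a}^{+}\in\bar{\mathcal B}_{\m R^{K}}(\delta^{3/2})$, runs a bootstrap with the bounds $|a(t)-(t+\delta^{-1})^{-1}|\le (t+\delta^{-1})^{-8/7}+(t+\delta^{-1})^{-\bar p/2}$, $\mathcal N(t)\le (t+\delta^{-1})^{-5/4}$, $\mathcal A(t)\le (t+\delta^{-1})^{-3}$, and closes all of them except the unstable one $\mathcal A$. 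The $a$-estimate is closed via the auxiliary quantity $\mathcal R_{1}=a+\tfrac{b}{2\alpha}$, which satisfies $\dot{\mathcal R}_{1}+\mathcal R_{1}^{2}=O(t^{-9/4}+t^{-\bar p})$; this is where the second-order nature of the modulation system (you have both a position $a$ and a velocity $b$, not just one scalar) is absorbed. The unstable component $\mathcal A$ is then handled by a Brouwer/no-retraction argument on the $K$ free parameters $\boldsymbol{\mathfrak a}^{+}$, using a transversality estimate $\frac{\d}{\d t}\big((t+\delta^{-1})^{3}\mathcal A\big)\ge \tfrac12\nu_{3}$ at the exit time.

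Your backward scheme is a legitimate alternative, but your discussion of the hyperbolic modes is muddled. In a genuine backward/Duhamel construction from $t=+\infty$, the forward-unstable modes are integrated from $+\infty$ and are automatically controlled (no shooting is needed), while the forward-stable modes are integrated from a finite time; you should not need ``a topological/shooting argument à la \cite{CMYZ}'' in that framework. Conversely, if you run forward (as the paper does), the shooting argument is essential and must tune $K$ parameters simultaneously, not one. You should commit to one of the two schemes. Also, two omissions to flag: (i) you must modulate the full geometric kernel $\mathcal Z_{q}$ (translations \emph{and} rotations $\Omega_{ij}q$), not just $\phi$; these parameters drift to a limit and are removed at the end by a symmetry transformation; (ii) the modulation equation is genuinely second order, and reducing it to your first-order caricature $2\alpha\dot b\approx -cb^{2}$ hides the step where one shows the ``velocity'' component is subdominant --- this is precisely what the paper's $\mathcal R_{1}$ trick does.
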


\subsection{Comments}

Let us first observe that Theorem \ref{thm:1} (and its proof) holds also in dimension 1, but of course, they are in that case a direct consequence of the complete description \cite{CMY} of global solutions in 1D (as mentioned above, excited states only exist for $N \ge 2$). This is the only reason why we restrict to dimension $N \ge 2$. The restrictions to $N \le 5$ and $2 <p < \frac{N+2}{N-2}$ are to ensure a nice local well posedness theory, and sufficient smoothness on the non-linearity so that Taylor expansion make sense up to order 2. In this perspective let us remind that our analysis encompasses the most physically relevant nonlinearity, the cubic one $f(u) = u^3$.

\bigskip

Regarding Theorem \ref{thm:1}: the ground state is of course non-degenerate, but one should keep in mind that is not so easy to construct degenerate excited states. As a matter of fact, the constructions in \cite{AMW,MPW} (see also \cite{DMPP11,MW} for the massless case) yield \emph{non-degenerate} excited states as well. This means that the scope of Theorem \ref{thm:1} is rather large and does certainly not restrict to the ground state.

\bigskip

We now discuss degree-1 excited states: as we mentioned, they should be understood as the simplest degenerate case. Already here, very little is known, and to our knowledge, our results are the first describing precisely the dynamics in a degenerate setting. From this point of view, the condition that $\dim \ker \q L_q = \dim (\q Z_q) +1$  is very natural.  Regarding the extra condition $E{'''}(q)\cdot(\phi,\phi,\phi) \ne 0$, let us note that it is generic; as we will see in Lemma \ref{le:spec}, it is equivalent with $E{'''}(q)$ being non identically $0$ on $(\ker \q L_q)^3$. 

It is remarquable that one already observes a drastic change in the dynamic in degree-1 degeneracy, when compared to non-degeneracy. The convergence here is indeed merely polynomial in time, which is a surprise: such slow rate of convergence is usually observed due to the interaction with another nonlinear object (as in \cite{CMYZ,CMY}), and this is not the case here. As it is seen in the proofs, the derivation of the main bootstrap regime is noticeably more involved in degree-1 degeneracy, and relies on the very specific algebra of the main ODE system at leading order (see Section \ref{sec:3}). 

\bigskip

One setting where excited states are better understood is the case of radial functions. Among these, radial bound states $q$ are either non-degenerate or satisfy the first condition in the degree-1 degeneracy definition \eqref{def:deg1}: indeed, among radial functions, the geometric kernel $\q Z_q$ is trivial and $\dim \ker_{\text{rad}} \q L_q \le 1$, see for example \cite[Section 2.3]{BRS}. 

All the arguments in the proofs below can taken word for word to the radial setting, and so our results hold for any packed \emph{radial} solution converging to a radial bound state $q$ which is either non-degenerate (Theorem \ref{thm:1}) or such that $E{'''}(q)|_{(\ker \q L_q)^3} \ne 0$ (Theorems \ref{thm:2} and \ref{thm:3}).

\section{Preliminaries}

\subsection{Proof of Proposition~\ref{prop:1.1}}\label{se:prop}

\begin{proof}[Proof of Proposition~\ref{prop:1.1}]
Denote $\vec W(t)=(W(t),\partial_{t} W(t))$ the solution to \eqref{equDKG} with initial data $\vec W(0) = (W_0,W_1)$ and $\vec u_n(t,x) = \vec u(t_n+t,x+y_n)$ the solution to \eqref{equDKG} with initial data $\vec u_n(0,x) = \vec u(t_n,x+y_n)$. We can assume that $\vec W \in \mathscr C([0,T_0], H^1 \times L^2)$ for some $T_0>0$.

As the \eqref{equDKG} flow is continuous in $H^1 \times L^2$ and $\vec u_n(0) \to \vec W(0)$, we infer that $\vec u_n$ is defined on $[0,T_0]$ for $n$ large enough and that \begin{equation} \label{conv_un}
 \vec u_n \to \vec W \quad \text{in} \quad \mathscr C([0,T_0], H^1 \times L^2).
 \end{equation}

This immediately prove that $\vec u$ is globally defined for positive times. Indeed, if $T_{\rm{max}} <+\infty$, then for large enough $n$, $T_{\rm{max}} \ge t_n + T_0 \to T_{\rm{max}} + T_0$, a contradiction: hence $T_{\rm{max}} = +\infty$.

As $\vec E(\vec u (t_n)) \to \vec E(W_0,W_1)$ we infer from the energy dissipation identity that $\partial_t u \in L^2([0,+\infty),L^2)$. Assume that $\vec W$ is not a stationnary solution. Then we can furthermore assume that $\partial_t W \ne 0$ on $[0,T_0] \times \m R^N$, so that $\| \partial_t W \|_{L^2([0,T_0]),L^2)} >0$. In particular, from the convergence \eqref{conv_un}, we conclude that
\[ \| \partial_t u \|_{L^2([t_n,t_n+T_0],L^2)} = \| \partial_t u_n \|_{L^2([0,T_0]),L^2)} \to  \| \partial_t W \|_{L^2([0,T_0]),L^2)} \quad \text{as} \quad n \to +\infty. \]
Let $t_n'$ be a subsequence of $t_n$ such that for all $n \in \m N$, $t_{n+1}' \ge t_n' +T_0$ and 
\[ \| \partial_t u \|_{L^2([t_n',t_n'+T_0],L^2)} \ge \frac{1}{2}  \| \partial_t W \|_{L^2([0,T_0]),L^2)}. \]
There holds
\[ \| \partial_t u \|_{L^2([0,+\infty),L^2)}^2 \ge \sum_{n} \| \partial_t u \|_{L^2([t_n',t_n'+T_0],L^2)}^2 \ge \sum_n \frac{1}{4} \| \partial_t W \|_{L^2([0,T_0]),L^2)}^2 = +\infty, \]
which is a contradiction. As a consequence, $\vec W$ is a stationary solution, which means that for all $t \ge0$, $\partial_t W(t)=0$ and $W(t)=q$ for some bound state $q$. In particular, $W_0 =q$ and $W_1=0$.

Furthermore, the energy dissipation identity writes for all $0 \le t \le t_n$:
\[ E(\vec u(t)) - E(\vec u(t_n)) = 2 \alpha \int_t^{t_n} \| \partial_t u (s) \|_{L^2}^2 \d s. \]
Letting $n \to +\infty$, we see that the left-hand side has a limit $E(\vec u(t)) - E(q,0)$, and so $\partial_t u \in L^2([t,+\infty),L^2)$. This completes the proof of Proposition \ref{prop:1.1}.
\end{proof}

\subsection{Notation}
Let $q$ be a bound state. Let $I_q$ be a subset of $\{ (i,j) : 1 \le i < j \le N \}$ such that
\[ \left\{\partial_{x_{n}}q,1\le n\le N;\Omega_{ij} q,(i,j)\in I_{q}\right\} \text{ is a basis of } \mathcal{Z}_{q}. \]
For any $(i,j)\in I_{q}$ and $\vartheta \in\m R$, we recall the Givens rotation:
\begin{equation}\label{def:Gij}
G_{ij}(\vartheta)=\left(\begin{matrix}
1      & \cdots & 0      & \cdots & 0     & \cdots & 0\\
\vdots & \ddots & \vdots &        &\vdots &        & \vdots\\
0      & \cdots & \cos\vartheta      &\cdots  & - \sin \vartheta    & \cdots & 0\\
\vdots &        & \vdots & \ddots & \vdots&        &\vdots\\
0      & \cdots & \sin \vartheta      &\cdots  & \cos \vartheta     & \cdots & 0\\
\vdots &        & \vdots &        &\vdots & \ddots &\vdots\\
0      & \cdots & 0      & \cdots & 0     & \cdots & 1
\end{matrix}\right),
\end{equation}
where $\cos \vartheta$ and $\sin\vartheta$ appear at the intersections $i$th and $j$th rows and columns. That is, the non-zero elements of the Givens matrix $G_{i,j}(\vartheta) = (g_{nm})_{nm}$ are given by:
\begin{equation*}
g_{nn}=1\quad \text{for } n\ne i,j,\quad 
g_{ii}=g_{jj}=\cos\vartheta,\quad \text{and}\quad 
g_{ij}=-g_{ji}=-\sin \vartheta.
\end{equation*}
 
For $K\in \mathbb{N}^*$ and $r>0$, we denote by $\mathcal{B}_{\RR^{K}}(r)$ (respectively, $\mathcal{S}_{\RR^{K}}(r)$) be the ball (respectively, the sphere) of $\RR^{K}$ of center $0$ and of radius $r$.

We denote $\langle\cdot,\cdot \rangle$ the $L^{2}$ scalar product for real-valued functions $u,v\in L^{2}$,
\begin{equation*}
\langle u,v \rangle:=\int_{\RR^{N}}u(x)v(x)\d x.
\end{equation*}
For vector-valued functions
\begin{equation*}
\vec{u}=\left( \begin{array}{c}
u_{1}\\ u_{2}
\end{array}\right),\quad \vec{v}=\left( \begin{array}{c}
v_{1}\\ v_{2}
\end{array}\right),
\end{equation*}
the notation $\langle\cdot,\cdot \rangle$ is also the $L^2$ scalar product, 
\begin{equation*}
\langle \vec{u},{\vec{v}}\rangle:=\sum_{k=1,2}\langle u_{k},v_{k}\rangle,\quad 
\|\vec{u}\|^{2}_{\E}:=\|u_{1}\|_{H^{1}}^{2}+\|u_{2}\|_{L^{2}}^{2}.
\end{equation*}
We also define $\bar{p}=\min\{3,p\}>2$ (recall that the nonlinearity power $p >2$).

\subsection{Spectral theory of linearized operator}

In this section, we introduce some spectral properties of the linearized operator for any bound state $q\in \mathcal{B}$.

For $\theta=\left(\theta_{ij}\right)_{(i,j)\in I_{q}}\in \mathbb{R}^{\# I_{q}}$, denote the rotation
\begin{equation*}
	R_{\theta}=G_{i_{1}j_{1}}\left(\theta_{i_{1}j_{1}}\right)\cdots G_{i_{\# I_{q}}j_{\# I_q}} (\theta_{ 
		i_{\# I_{q}} j_{\# I_{q}}}).
\end{equation*}
For $(z,\theta)\in \mathbb{R}^{N + \#I_q}$, we introduce the following transformation $\mathcal{T}_{(z,\theta)}$ linked to the symmetries of \eqref{equDKG}: for $f \in L^2$,
\begin{equation*}
\mathcal{T}_{(z,\theta)}f:=f(R_{\theta}(\cdot-z)).
\end{equation*}
Observe that for all $q\in \mathcal{B}$, ${\mathcal{Z}}_{q}$ is generated by taking partial derivatives of $\mathcal{T}_{(z,\theta)}  q$ with respect to $(z,\theta)$ at $(z,\theta)=(\boldsymbol{0},\boldsymbol{0})$:
\begin{align} \label{eq:derivative_T}
\partial_{x_n} q = - \frac{\partial}{\partial z_n} \mathcal{T}_{(z,\theta)} q |_{(z,\theta) = (\boldsymbol{0},\boldsymbol{0})}, \quad \Omega_{ij} q = \frac{\partial}{\partial \theta_{ij}} \mathcal{T}_{(z,\theta)} q |_{(z,\theta) = (\boldsymbol{0},\boldsymbol{0})}.
\end{align}

First, we recall standard properties of the linearized operator $\mathcal{L}_{q}$.
\begin{lemma}\label{le:spec}
	\emph{(i) Spectral properties.} The self-adjoint operator $\mathcal{L}_{q}$ has essential spectrum $[1,+\infty)$, a finite number $K\ge 1$ of negative eigenvalues and its kernel is of finite dimension $M$ with $M\ge N$. Let $(Y_{k})_{k=1,\cdots,K}$ be an $L^{2}$ orthogonal family of eigenfunctions of $\mathcal{L}_{q}$ with negative eigenvalues $(-\lambda^{2}_{k})_{k=1,\cdots,K}$, i.e.
	\begin{equation}\label{equ:Yk}
	\langle Y_{k},Y_{k'}\rangle=\delta_{kk'}\quad \text{and}\quad \mathcal{L}_{q}Y_{k}=-\lambda_{k}^{2}Y_{k},\quad \lambda_{k}>0.
	\end{equation}
\emph{(ii) Coercivity.} Denote $\Pi_q$ the $L^2$-orthogonal projection on $\ker \q L_q$. There exists $c>0$ such that for all $\eta\in H^{1}$,
\begin{equation}\label{coer}
\langle \mathcal{L}_{q}\eta,\eta\rangle\ge c \|\eta\|^{2}_{H^{1}}-c^{-1}\bigg(\| \Pi_q \eta \|_{L^2}^{2}+\sum_{k=1}^{K}\langle \eta,Y_{k}\rangle^{2}\bigg).
\end{equation}
\emph{(iii) Cancellation.}
We have, for all $\psi_{1},\psi_{2}\in {\ker\mathcal{L}_{q}}$ and $\psi_{3}\in {\mathcal{Z}}_{q}$
\begin{equation}\label{equ:can}
\langle f''(q)\psi_{1}\psi_{2},\psi_{3}\rangle=0.
\end{equation}
\end{lemma}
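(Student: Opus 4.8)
The plan is to establish each of the three statements by relatively classical arguments.

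\emph{(i) Spectral properties.} Since $q$ and its derivatives decay exponentially and are of class $\q C^2$, the potential $f'(q) = p|q|^{p-1}$ belongs to $L^\infty \cap L^{N/2}$ (using $2 \le N \le 5$ and $p>2$) and tends to $0$ at infinity; in particular $f'(q)$ is a relatively compact perturbation of $-\Delta + 1$. By Weyl's theorem the essential spectrum of $\q L_q = (-\Delta + 1) - f'(q)$ is that of $-\Delta+1$, namely $[1,+\infty)$. Consequently the spectrum below $1$ consists of isolated eigenvalues of finite multiplicity, and since the potential is a bounded, decaying perturbation, there are only finitely many of them; this gives finitely many negative eigenvalues and a finite-dimensional kernel $M < \infty$. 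The existence of at least one negative eigenvalue ($K \ge 1$) follows from testing the quadratic form on $q$ itself: $\langle \q L_q q, q\rangle = \int (|\nabla q|^2 + q^2 - f'(q)q^2) = \int (f(q)q - f'(q)q^2) = (1-p)\int |q|^{p+1} < 0$, using \eqref{q} and $f'(q)q = p f(q)$. The bound $M \ge N$ is just $\q Z_q \subset \ker \q L_q$, together with the linear independence of $\partial_{x_1} q, \dots, \partial_{x_N} q$ (which holds because $q \in H^1$ is nontrivial). Finally one chooses an $L^2$-orthonormal basis of the (finite-dimensional) negative eigenspace to get the $Y_k$.

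\emph{(ii) Coercivity.} This is the standard consequence of (i). Let $H = \ker \q L_q \oplus \operatorname{Span}(Y_1,\dots,Y_K)$, a finite-dimensional subspace, and let $H^\perp$ be its $L^2$-orthogonal complement. On $H^\perp$ the operator $\q L_q$ has spectrum bounded below by some $\gamma > 0$ (the bottom of the spectrum away from the kernel and negative modes), so $\langle \q L_q \eta, \eta\rangle \ge \gamma \|\eta\|_{L^2}^2$ for $\eta \in H^\perp \cap H^1$; combining with $\langle \q L_q \eta, \eta \rangle \ge \|\nabla \eta\|_{L^2}^2 + \|\eta\|_{L^2}^2 - \|f'(q)\|_{L^\infty}\|\eta\|_{L^2}^2$ upgrades this to control of the full $H^1$ norm on $H^\perp$. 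For general $\eta \in H^1$, decompose $\eta = \eta_0 + \eta_\perp$ with $\eta_0 \in H$, $\eta_\perp \in H^\perp$; the cross terms $\langle \q L_q \eta_0, \eta_\perp\rangle$ vanish (eigenvectors, kernel), $\langle \q L_q \eta_0, \eta_0\rangle$ is bounded below by $-C\|\eta_0\|_{L^2}^2$, and $\|\eta_0\|_{L^2}^2$ is comparable to $\|\Pi_q \eta\|_{L^2}^2 + \sum_k \langle \eta, Y_k\rangle^2$ (finite dimension). Putting the pieces together yields \eqref{coer}.

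\emph{(iii) Cancellation.} This is the key structural identity and is where a little care is needed. The point is that differentiating the stationary equation $E'(q) = 0$ along the symmetry group produces relations between $\q L_q$ and $f''(q)$. Concretely, for $\psi_3 \in \q Z_q$ write $\psi_3 = \partial_s\big|_{s=0}\big(\q T_{s}q\big)$ for a one-parameter subgroup of isometries $\q T_s$ (translation or rotation); since each $\q T_s q$ again solves \eqref{q}, differentiating twice in $s$ and using $\q L_q \psi_3 = 0$ gives $\q L_q (\partial_s^2\big|_0 \q T_s q) = f''(q)\psi_3^2$ — but a cleaner route, avoiding second derivatives of the group action, is the following. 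For $\psi_1, \psi_2 \in \ker \q L_q$ and $\psi_3 \in \q Z_q$, use that $E'''(q)$ is a symmetric trilinear form with $E'''(q)\cdot(v_1,v_2,v_3) = -\langle f''(q) v_1 v_2, v_3\rangle$ (from the explicit form of $\q W$/$E$), so \eqref{equ:can} is exactly $E'''(q)\cdot(\psi_1,\psi_2,\psi_3) = 0$. Now differentiate the identity $E''(\q T_s q)\cdot(v,w) = E''(q)\cdot(\q T_{-s}v, \q T_{-s}w)$ (which holds because $E$ is invariant under $\q T_s$) at $s=0$: the left side gives $E'''(q)\cdot(\psi_3, v, w)$ and the right side gives $-E''(q)\cdot(\partial_s|_0 \q T_{-s}v, w) - E''(q)\cdot(v, \partial_s|_0 \q T_{-s}w) = \langle \q L_q w, \delta v\rangle + \langle \q L_q v, \delta w\rangle$ where $\delta v, \delta w$ denote the infinitesimal action; choosing $v = \psi_1$, $w = \psi_2$ in $\ker \q L_q$ kills the right-hand side and yields $E'''(q)\cdot(\psi_1,\psi_2,\psi_3) = 0$, i.e.\ \eqref{equ:can}.

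The main obstacle is part (iii): one must be careful that the infinitesimal generators $\delta v$ of the symmetry action (which a priori only map $H^1$ to $L^2$, losing a derivative) are handled in a way that keeps all pairings well-defined — this is where the regularity and exponential decay of $q$ and its derivatives is used, to guarantee that $f''(q)\psi_1\psi_2 \in L^1$ and that the formal manipulations with $E''$, $E'''$ are legitimate. Parts (i) and (ii) are standard and should be quick.
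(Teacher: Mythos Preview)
Your proposal is correct and follows essentially the same approach as the paper. For (i) and (ii) the paper simply cites \cite{CMakg}, and your sketch is the standard argument underlying that reference; for (iii) the paper differentiates the equation $\q L_q\psi_1=0$ along the symmetry group (obtaining $f''(q)\psi_1\psi_3=\q L_q(\Omega_{ij}\psi_1)$ and then pairing with $\psi_2$), which is exactly your computation repackaged at the level of $E'''$ rather than at the PDE level.
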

\begin{proof}
	Proof of (i) and (ii). See the proof of~\cite[Lemma 1]{CMakg}.
	
	Proof of (iii). Without loss of generality, we first consider
	\begin{equation*}
	\psi_{3}=\Omega_{ij}q\quad \text{for}\ (i,j)\in I_{q}.
	\end{equation*}
For any $\psi_{1}\in \ker\mathcal{L}_{q}$, we have
\begin{equation*}
-\Delta \psi_{1}+\psi_{1}-f'(q)\psi_{1}=0.
\end{equation*}
Consider the transformation $\mathcal{T}_{(z,\theta)}$ with $(z,\theta)=\left(\boldsymbol{0},\theta\right)$ for the above identity,
\begin{equation*}
-\Delta (\mathcal{T}_{(z,\theta)}\psi_{1})+\mathcal{T}_{(z,\theta)}\psi_{1}-f'(\mathcal{T}_{(z,\theta)}q)(\mathcal{T}_{(z,\theta)}\psi_{1})=0.
\end{equation*}
Note that, from $p>2$, we can take the derivative of above identity with respect to $\theta_{ij}$, and then let $\theta=\boldsymbol{0}$. It follows that 
\begin{equation*}
f''(q)\psi_{1}\psi_{3}=-\Delta \tilde{\psi}_{1}+\tilde{\psi}_{1}-f'(q)\tilde{\psi}_{1}=\mathcal{L}_{q}\tilde{\psi}_{1}\quad \text{where}\ \tilde{\psi}_{1}=\Omega_{ij}\psi_{1}.
\end{equation*}
Thus, by integration by parts and $\psi_{2}\in {\ker\mathcal{L}_{q}}$,
\begin{equation*}
\langle f''(q)\psi_{1}\psi_{2},\psi_{3}\rangle=\langle f''(q)\psi_{1}\psi_{3},\psi_{2}\rangle=\langle \mathcal{L}_{q}\tilde{\psi}_{1},\psi_{2}\rangle=\langle \tilde{\psi}_{1},\mathcal{L}_{q}\psi_{2}\rangle=0.
\end{equation*}
Proceeding similarly for all the parameters in the transformation $\mathcal{T}_{(z,\theta)}$, we complete the proof of (iii).
\end{proof}

As a consequence of the above Lemma,  in the case when $q$ is furthermore assumed to be a degree-1 excited state, we now choose $\phi$ (introduced in Definition \ref{def:phi}) with more rigid properties: namely we claim that there exists (a unique)  $\phi \in H^1$ such that for all $n=1,\cdots,N$, $(i,j)\in I_{q}$,
\begin{equation}\label{equ:phi}
\langle \phi,\partial_{x_{n}}q\rangle=\langle \phi,\Omega_{ij}q\rangle=0\quad 
\text{and}\quad 
\ker \q L_{q}={\q Z}_{q} {\oplus} \Span\{\phi\},
\end{equation}
and such that 
\begin{equation}\label{def:kappa}
-(4\alpha\|\phi\|_{L^{2}}^{2})^{-1}E'''(\phi,\phi,\phi) =1.
\end{equation}
\eqref{equ:phi} essentially means that $\phi$ is the $L^2$-orthogonal supplement of $\q Z_q$. Moreover, due to \eqref{equ:can} and the fact that $E'''(q)$ is not identically $0$ on $\q L_q^3$ (in view of \eqref{def:deg1}), for such a $\phi$ one has $E'''(q)(\phi,\phi,\phi) \ne 0$: and so, by considering the transformations $\phi\to -\phi$ and $\phi\to \lambda \phi$, the  condition \eqref{def:kappa} can be met.

 \smallskip

\subsection{Modulation around a bound state}

Let $q \in \q B$ be a degree-1 degenerate bound state. Given  time dependent $\mathscr C^1$ functions $z,\theta,a$, with values in $\mathbb{R}^{N}$, $\mathbb{R}^{\# I_{q}}$ and $\m R$, we denote
\begin{align}
Q=\mathcal{T}_{(z,\theta)}q &=q\left(R_{\theta}(\cdot-z)\right), & \quad & \Phi(t,x) =  \q T_{(z,\theta)} \phi = \phi \left(R_{\theta}(\cdot-z)\right),\\
V(t,x) & = a(t) \Phi(t,x), & \quad & G=f\left(Q+V\right)-f(Q)-f'(Q) V. \label{def:G}
\end{align}
It will convenient to encompass both non-degenerate and degree-1 degenerate cases at once by setting $a \equiv 0$ if $q$ is non-degenerate.

For all $(i,j)\in I_{q}$ and $(i',j')\in I_{q}$, we denote as follows the derivatives:
\begin{align}
\Psi_{ij}=\frac{\partial Q}{\partial{\theta_{ij}}},\quad \Phi_{ij}=\frac{\partial \Phi}{\partial{\theta_{ij}}},\quad 
\Psi_{ij}^{i'j'}=\frac{\partial \Psi_{ij}}{\partial \theta_{i'j'}}.
\end{align}
Finally, we introduce the exponential directions. For $k=1,\cdots,K$, we denote
\[ \Upsilon_{k} =\mathcal{T}_{(z,\theta)}Y_{k},\quad \Upsilon_{k}^{ij}=\frac{\partial \Upsilon_{k}}{\partial \theta_{ij}}, \]
and
\begin{equation}
\nu_{k}^{\pm}=-\alpha\pm\sqrt{\alpha^{2}+\lambda_{k}^{2}},\quad \zeta_{k}^{\pm}=\alpha\pm \sqrt{\alpha^{2}+\lambda^{2}_{k}}\quad \text{and}\quad 
\vec{Z}^{\pm}_{k}= \begin{pmatrix}
\zeta_{k}^{\pm}\Upsilon_{k}\\
\Upsilon_{k}
\end{pmatrix}.
\end{equation}
The importance of $\vec{Z}^\pm_k$ come from the following observation: if $\vec v = (v_1,v_2)$ is a solution to the linearized \eqref{equDKG} equation
\[ \partial_t \begin{pmatrix} 
v_1 \\
v_2
\end{pmatrix}  = \begin{pmatrix} 
v_2 \\
- 2 \alpha v_2 - \q L_q v_1
\end{pmatrix}, \]
then with $a_k^\pm := \langle \vec v, \vec Z_k^\pm \rangle$, there hold $\frac{\d}{\d t} a_k^\pm = \nu_k^\pm a_k^\pm$. 
\smallskip

Observe that all the function introduced are at least of class $\mathscr C^1$ and have pointwise exponential decay.

By direct computation and the definition of Givens rotations in~\eqref{def:Gij}, we have, for all $n=1,\cdots,N$ and $(i,j)\in I_{q}$, 
\begin{equation}\label{equ:kert}
\begin{aligned}
\Psi_{ij}&\in {\Span} \left\{(\Omega_{ij}q)\left(R_{\theta}(\cdot-z)\right): (i,j)\in I_{q}\right\},\\
\partial_{x_{n}}Q&\in {\Span}
\left\{(\partial_{x_{1}}q)\left(R_{\theta}(\cdot-z)\right),\cdots,(\partial_{x_{N}}q)\left(R_{\theta}(\cdot-z)\right)\right\}.
\end{aligned}
\end{equation}
Moreover, by the chain rule, we have, for all $(i,j)\in I_{q}$ and $k=1,\cdots,K$,
\begin{equation}\label{equ:dtQPhiY}
\begin{aligned}
\partial_{t} Q&=-\dot z\cdot \nabla Q+\sum_{(i,j)\in I_{q}}\dot{\theta}_{ij}\Psi_{ij},\\
\partial_{t}\Phi&=-\dot z\cdot\nabla \Phi+\sum_{(i,j)\in I_{q}}\dot{\theta}_{ij}\Phi_{ij},\\
\partial_{t}\Upsilon_{k}&=-\dot{z}\cdot\nabla \Upsilon_{k}+\sum_{(i,j)\in I_{q}}\dot{\theta}_{ij}
\Upsilon^{ij}_{k},\\
\partial_{t}\Psi_{ij}&=-\dot{z} \cdot \nabla \Psi_{ij}+\sum_{(i',j')\in I_{q}}\dot{\theta}_{i'j'}\Psi_{ij}^{i'j'}.
\end{aligned}
\end{equation} 

As a consequence of \eqref{eq:derivative_T}, we have the following expansions for small $\theta \in \m R^{\# I_q}$.
\begin{lemma}
For $|\theta|\ll 1$ small, we have, for all $(i,j)\in I_{q}$, $(i',j')\in I_{q}$, $k=1,\dots,K$ and $n=1,\dots,N$,
\begin{equation}\label{est:kernel}
\begin{aligned}
\Psi_{ij}&=\left(\Omega_{ij}q\right)\left(R_{\theta}(\cdot-z)\right)+O_{H^{1}}(|\theta|),\\
\Phi_{ij}&=\left(\Omega_{ij}\phi\right) \left(R_{\theta}(\cdot-z)\right)+O_{H^{1}}(|\theta|),\\
 \Upsilon_{k}^{ij}&=\left(\Omega_{ij}Y_{k}\right)\left(R_{\theta}(\cdot-z)\right)+O_{H^{1}}(|\theta|),\\
\partial_{x_{n}}Q&=\left(\partial_{x_{n}}q\right)\left(R_{\theta}(\cdot-z)\right)+O_{H^{1}}(|\theta|),\\
\Psi_{ij}^{i'j'}&=\left(\Omega_{i'j'}\Omega_{ij}q\right)\left(R_{\theta}(\cdot-z)\right)+O_{H^{1}}(|\theta|).
\end{aligned}
\end{equation}
\end{lemma}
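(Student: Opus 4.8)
The plan is to derive all five identities in \eqref{est:kernel} from a single Taylor expansion in $\theta$ about $\theta=\boldsymbol{0}$. Each identity has the shape $A(\theta)=B(\theta)+O_{H^1}(|\theta|)$, where $A(\theta)$ is a $\theta$-derivative of $\q T_{(z,\theta)}$ applied to one of $q,\phi,Y_k$ and $B(\theta)$ is the $R_\theta$-rotation of the corresponding function ($\Omega_{ij}q$, $\partial_{x_n}q$, $\Omega_{i'j'}\Omega_{ij}q$, \dots). By \eqref{eq:derivative_T} (and its second-order analogue, proved below) $A$ and $B$ agree at $\theta=\boldsymbol{0}$, so it suffices to check that $A-B$ is Lipschitz --- indeed $\mathscr C^1$ --- in $\theta$ near $\boldsymbol{0}$ with values in $H^1$. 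First I would reduce to $z=0$: translation by $z$ is an $H^1$-isometry and commutes with every operation involved, so $\|A(\theta)-B(\theta)\|_{H^1}$ does not depend on $z$. The second ingredient is regularity and decay: iterating elliptic regularity on $-\Delta q+q=f(q)$ (licit since $p>2$ makes $f\in\mathscr C^2$) gives $q\in\mathscr C^4$, and since $f'(q)\to0$ exponentially, Agmon-type estimates yield exponential decay of $q$ and its derivatives up to order $4$; likewise $\q L_q\phi=0$ and $\q L_q Y_k=-\lambda_k^2 Y_k$ give $\phi,Y_k\in\mathscr C^2$ with exponentially decaying derivatives up to order $2$. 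Only this finite regularity is used.

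The heart of the matter is a \emph{master estimate}: if $g$ and its first and second derivatives decay exponentially, then for $|\theta|\ll1$,
\[ \partial_{\theta_{ij}}\big[g(R_\theta(\cdot))\big]=(\Omega_{ij}g)(R_\theta(\cdot))+O_{H^1}(|\theta|). \]
Indeed $\partial_{\theta_{ij}}[g(R_\theta x)]=\nabla g(R_\theta x)\cdot(\partial_{\theta_{ij}}R_\theta)x$, and since $R_\theta$ is the product $G_{i_1j_1}(\theta_{i_1j_1})\cdots$, one has $\partial_{\theta_{ij}}R_\theta=B_{ij}+O(|\theta|)$ as a matrix, where $B_{ij}=G_{ij}'(0)$ is the antisymmetric matrix with $-1$ at $(i,j)$ and $+1$ at $(j,i)$; this is exactly the matrix for which $\nabla g(y)\cdot B_{ij}y=(\Omega_{ij}g)(y)$. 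Hence the leading term $\nabla g(R_\theta x)\cdot B_{ij}x$ differs from $(\Omega_{ij}g)(R_\theta x)=\nabla g(R_\theta x)\cdot B_{ij}(R_\theta x)$ only by $\nabla g(R_\theta x)\cdot B_{ij}(\mathrm{Id}-R_\theta)x=O(|\theta|)\,|\nabla g(R_\theta x)|\,|x|$. The total error is therefore $O(|\theta|)$ times functions of the form $|\nabla g(R_\theta(\cdot))|\,|\cdot|$ (and its first derivatives), and by the change of variables $w=R_\theta x$, which preserves Lebesgue measure and $|x|$, their $H^1$ norms are bounded by polynomial-weighted $H^1$ norms of $\nabla g$, finite by the exponential decay of $g,\nabla g,\mathrm{Hess}\,g$, uniformly for $|\theta|\ll1$. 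Applying this to $g=q,\phi,Y_k$ gives the first three identities of \eqref{est:kernel}; the fourth is even simpler since $\partial_{x_n}[q(R_\theta x)]=\sum_m (R_\theta)_{mn}(\partial_m q)(R_\theta x)=(\partial_{x_n}q)(R_\theta x)+O(|\theta|)\sum_m|(\partial_m q)(R_\theta x)|$ and $\|(\partial_m q)(R_\theta(\cdot))\|_{H^1}=\|\partial_m q\|_{H^1}$.

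For the second-order term $\Psi_{ij}^{i'j'}=\partial_{\theta_{i'j'}}\partial_{\theta_{ij}}[q(R_\theta(\cdot))]$, differentiating the previous formula once more produces, at $\theta=\boldsymbol{0}$, the two contributions $[\mathrm{Hess}\,q(y)(B_{i'j'}y)]\cdot(B_{ij}y)$ (using that $\mathrm{Hess}\,q$ is symmetric) and $\nabla q(y)\cdot(\partial_{\theta_{i'j'}}\partial_{\theta_{ij}}R_\theta|_{\boldsymbol{0}})y$, plus an $O(|\theta|)$ error built from $\nabla q,\mathrm{Hess}\,q,\nabla^3 q$ and polynomial weights, whose $H^1$ norms are finite by the exponential decay of $q$ up to order $4$. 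A direct computation of $\Omega_{i'j'}(\Omega_{ij}q)(y)=(B_{i'j'}y)\cdot\nabla\big((B_{ij}y)\cdot\nabla q(y)\big)$ gives exactly $[\mathrm{Hess}\,q(y)(B_{i'j'}y)]\cdot(B_{ij}y)+(B_{ij}B_{i'j'}y)\cdot\nabla q(y)$, so the claimed identity at $\theta=\boldsymbol{0}$ reduces to $\partial_{\theta_{i'j'}}\partial_{\theta_{ij}}R_\theta|_{\boldsymbol{0}}=B_{ij}B_{i'j'}$. From the product structure of $R_\theta$, the two $\theta$-derivatives extract the generators $B_{ij}$ and $B_{i'j'}$ in their respective slots while all other factors remain equal to $\mathrm{Id}$, so the identity holds with the convention that $G_{ij}$ precedes $G_{i'j'}$ in $R_\theta$; for $(i,j)=(i',j')$ it is $B_{ij}^2=G_{ij}''(0)$, the diagonal matrix with $-1$ at $(i,i)$ and $(j,j)$. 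With the matching ordering of $I_q$, the fifth identity then follows by the same Taylor argument.

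I expect no substantial obstacle: the only points requiring care are the regularity-and-decay bookkeeping for $\phi$, $Y_k$ and for the higher derivatives of $q$ --- so that the polynomial weights generated by $\theta$-differentiation do not spoil the $H^1$ bounds --- and the non-commutativity $[B_{ij},B_{i'j'}]\ne0$ entering the second-order identity, which forces one to track the order of the Givens factors in $R_\theta$ in order to identify the limit with $\Omega_{i'j'}\Omega_{ij}q$ rather than $\Omega_{ij}\Omega_{i'j'}q$.
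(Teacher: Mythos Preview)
Your approach is correct and is precisely the argument the paper has in mind: note that the paper gives no proof of this lemma beyond the sentence ``As a consequence of \eqref{eq:derivative_T}, we have the following expansions\dots'', so you are supplying the omitted details. Your master estimate via the matrix expansion $\partial_{\theta_{ij}}R_\theta=B_{ij}+O(|\theta|)$ and the identification $\nabla g(y)\cdot B_{ij}y=(\Omega_{ij}g)(y)$ is exactly the right mechanism, and your treatment of the polynomial weights by exponential decay is the standard way to control the $H^1$ remainders.

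One remark: you rightly flag that the second-order identity $\Psi_{ij}^{i'j'}=(\Omega_{i'j'}\Omega_{ij}q)(R_\theta(\cdot-z))+O_{H^1}(|\theta|)$ depends, through $\partial_{\theta_{i'j'}}\partial_{\theta_{ij}}R_\theta|_{\boldsymbol 0}=B_{ij}B_{i'j'}$, on the ordering of the Givens factors in $R_\theta$; with the opposite ordering one obtains $(\Omega_{ij}\Omega_{i'j'}q)(R_\theta(\cdot-z))$ instead. This is a genuine subtlety of the \emph{statement} rather than a gap in your proof, and it is harmless for the paper's purposes: the only place \eqref{est:kernel} is used is to bound inner products like $\langle\Psi_{ij},\partial_{x_n}Q\rangle$ and sizes like $\|\partial_t\Psi_{ij}\|_{H^1}$ up to $O(|\theta|)$, for which either ordering of $\Omega_{ij},\Omega_{i'j'}$ yields the same conclusion. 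Your proof, with the ordering caveat you already noted, is complete.
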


If $q$ is a non-degenerate bound state, we use same notations as above for degree-1 degenerate bound states, but with $a=0$ and $\phi=0$.

For future reference, we state the following Taylor formulas involving the functions $F$ and $f$, and omit its proof.

\begin{lemma} For all $s\in \mathbb{R}$, and $x \in \m R^N$, we have 
\begin{gather}\label{est:tay1}
\left|f'(Q+s)-f'(Q)\right|\lesssim |s|+|s|^{p-1}, \\
\label{est:tay2}
\left|f(Q+s)-f(Q)-f'(Q)s\right|\lesssim s^{2}+|s|^{p}, \\
\label{est:tay6}
\left|f(Q+s)-f(Q)-f'(Q)s-\frac{1}{2}f''(Q)s^{2}\right|\lesssim |s|^{3}+|s|^{p}, \\
\label{est:tay3}
\left|f(Q+V+s)-f(Q+V)-f'(Q+V)s\right|\lesssim s^{2}+|s|^{p}, \\
\label{est:tay4}
\left|F(Q+V+s)-F(Q+V)-f(Q+V)s\right|\lesssim |Q+V|^{p-1}s^{2}+|s|^{p+1}, \\
\label{est:tay5}
\left|F(Q+V+s)-F(Q+V)-f(Q+V)s-\frac{1}{2}f'(Q+V)s^{2}\right|\lesssim |s|^{3}+|s|^{p+1},
\end{gather}
where all the implied constants in the $\lesssim$ are uniform in the space variable of $Q$ or $V$.
\end{lemma}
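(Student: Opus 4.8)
The plan is to deduce all six inequalities from two ingredients: a short list of elementary scalar estimates for the maps $t\mapsto|t|^{p-1}t$, $t\mapsto|t|^{p-1}$ and $t\mapsto|t|^{p-3}t$, and Taylor's formula with integral remainder. Throughout one uses that $Q=q(R_{\theta}(\cdot-z))$ satisfies $\|Q\|_{L^{\infty}}=\|q\|_{L^{\infty}}<\infty$ and that $V=a\Phi$ satisfies $\|V\|_{L^{\infty}}\le|a|\,\|\phi\|_{L^{\infty}}$, so that in the modulation regime $|a|\ll1$ one has $\|Q\|_{L^{\infty}}+\|V\|_{L^{\infty}}\le C_{0}$ for a fixed constant $C_{0}$; the implied constants are then allowed to depend on $p$ and $C_{0}$ only, hence in particular are uniform in $x$.

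First I would record the scalar inequalities. With $f(u)=|u|^{p-1}u$, $f'(u)=p|u|^{p-1}$ and $f''(u)=p(p-1)|u|^{p-3}u$ (continuous since $p>2$), one has, for all $a,b\in\m R$,
\begin{gather*}
\big||a|^{p-1}a-|b|^{p-1}b\big|\lesssim\big(|a|^{p-1}+|b|^{p-1}\big)|a-b|,\qquad \big||a|^{p-1}-|b|^{p-1}\big|\lesssim\big(|a|^{p-2}+|b|^{p-2}\big)|a-b|,
\end{gather*}
together with $\big||a|^{p-3}a-|b|^{p-3}b\big|\lesssim\big(|a|^{p-3}+|b|^{p-3}\big)|a-b|$ when $p\ge3$, and $\big||a|^{p-3}a-|b|^{p-3}b\big|\lesssim|a-b|^{p-2}$ when $2<p<3$. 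In the differentiable cases these follow by writing the left-hand side as $\int_{b}^{a}(\cdots)\,\d t$ and estimating the integrand by $\max(|a|,|b|)$ raised to the relevant power; in the remaining case ($2<p<3$) it is the standard H\"older estimate for $t\mapsto|t|^{\beta}\,\mathrm{sgn}(t)$, $\beta=p-2\in(0,1)$. Combined with the elementary bound $|Q+w|^{\beta}\lesssim|Q|^{\beta}+|w|^{\beta}\lesssim1+|w|^{\beta}$ for $\beta\ge0$ (subadditivity if $\beta\le1$, convexity if $\beta\ge1$), the second displayed inequality gives \eqref{est:tay1} at once: $|f'(Q+s)-f'(Q)|\lesssim(|Q+s|^{p-2}+|Q|^{p-2})|s|\lesssim(1+|s|^{p-2})|s|$.

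Then I would obtain the remaining five estimates by Taylor expansion with integral remainder. For \eqref{est:tay2}, write $f(Q+s)-f(Q)-f'(Q)s=\int_{0}^{1}\big(f'(Q+\tau s)-f'(Q)\big)s\,\d\tau$ and apply \eqref{est:tay1} with $\tau s$ in place of $s$, giving $\lesssim\int_{0}^{1}(|\tau s|+|\tau s|^{p-1})|s|\,\d\tau\lesssim s^{2}+|s|^{p}$; \eqref{est:tay3} is identical with $Q+V$ replacing $Q$ (using $\|Q+V\|_{L^{\infty}}\le C_{0}$). For \eqref{est:tay6}, the remainder equals $\int_{0}^{1}(1-\tau)\big(f''(Q+\tau s)-f''(Q)\big)s^{2}\,\d\tau$: when $p\ge3$ the third displayed estimate above bounds the integrand by $(1+|s|^{p-3})|s|$, whence $\lesssim|s|^{3}+|s|^{p}$, and when $2<p<3$ the H\"older version bounds it by $|s|^{p-2}$, whence $\lesssim|s|^{p}\le|s|^{3}+|s|^{p}$. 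Finally, since $F'=f$ and $F''=f'$, write the first- and second-order Taylor remainders of $F$ at $Q+V$ as $\int_{0}^{1}\big(f(Q+V+\tau s)-f(Q+V)\big)s\,\d\tau$ and $\int_{0}^{1}(1-\tau)\big(f'(Q+V+\tau s)-f'(Q+V)\big)s^{2}\,\d\tau$ respectively; the first scalar inequality gives $|f(Q+V+\tau s)-f(Q+V)|\lesssim(|Q+V|^{p-1}+|s|^{p-1})|s|$, which after integration is \eqref{est:tay4}, while \eqref{est:tay1} gives the integrand bound $(|s|+|s|^{p-1})s^{2}$, hence $\lesssim|s|^{3}+|s|^{p+1}$, which is \eqref{est:tay5}.

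I do not expect a genuine obstacle here; the only point requiring any care is the range $2<p<3$ in \eqref{est:tay6}, where $f''$ is merely $(p-2)$-H\"older rather than Lipschitz — and this is exactly why the term $|s|^{3}$ has to appear on the right-hand side (it is redundant once $p\ge3$).
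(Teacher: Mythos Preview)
Your proof is correct and complete. The paper actually omits the proof of this lemma entirely, presenting the inequalities as standard Taylor estimates (``we state the following Taylor formulas involving the functions $F$ and $f$, and omit its proof''), so your argument supplies what the paper leaves to the reader.

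One small expository slip in your closing remark: the parenthetical ``it is redundant once $p\ge 3$'' is backwards. For $p>3$ and $|s|$ small, one has $|s|^{3}>|s|^{p}$, so the $|s|^{3}$ term is genuinely needed in that range; conversely, for $2<p<3$ your H\"older argument yields only $|s|^{p}$, which already dominates $|s|^{3}$ for small $|s|$, so there the $|s|^{3}$ is the redundant one. This does not affect the validity of the proof itself, which handles both regimes correctly.
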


First, we introduce the standard modulation result around the non-degenerate state or degree-1 excited state $q$.

\begin{proposition}[Properties of the modulation]\label{pro:dec}
		There exists $0<\gamma_{0}\ll 1$ such that for any $0<\gamma<\gamma_{0}$, $T_{1}\le T_{2}$, and any 
		solution $\vec{u}=(u,\partial_{t} u)$ of~\eqref{equDKG} on $[T_{1},T_{2}]$ satisfying
	\begin{equation}\label{est:dec}
		\sup_{t\in[T_{1},T_{2}]}\left\{\inf_{\xi\in \RR^{N}}\|u(t)-q(\cdot-\xi)\|_{H^{1}}+\|\partial_{t}u(t)\|_{L^{2}}\right\}<\gamma,
	\end{equation}
		there exist unique $\mathscr C^{1}$ functions 
		\begin{equation*}
		\begin{array}{rcl}
		[T_{1},T_{2}] & \to & \mathbb{R}^{N}\times\mathbb{R}^{N}\times \mathbb{R}^{\# I_{q}}\times \mathbb{R}^{\# I_{q}}\times \mathbb{R}\times\mathbb{R} \\
		t & \mapsto & (z(t),\ell(t),\theta(t),\beta(t),a(t),b(t)) 
		\end{array}
		\end{equation*}
		such that, if we define $\vec{\varphi}=(\vp,\vpp)$ by 
		\begin{equation}\label{def:vp}
		\vec{u}=\left(
		\begin{array}{c}
		u\\
		\partial_{t} u
		\end{array}\right)=\left(
		\begin{array}{c}
		Q\\
		-\ell\cdot\nabla Q
		\end{array}\right)+\sum_{(i,j)\in I_{q}}\beta_{ij}\left(
		\begin{array}{c}
		0\\
		\Psi_{ij}
		\end{array}\right)+\left(
		\begin{array}{c}
		a\Phi\\
		b\Phi
		\end{array}\right)+\left(
		\begin{array}{c}
		\varphi_{1}\\
		\varphi_{2}
		\end{array}\right),
		\end{equation}
where $\beta = (\beta_{ij})_{(i,j) \in I_q}$, it satisfies, for all $t\in [T_{1},T_{2}]$,
		\begin{equation}\label{small}
		\begin{aligned}
			\|\vec{\varphi}(t)\|_{\E}+|\theta(t)|&\lesssim \|u(t)-q\|_{H^{1}}+\|\partial_{t}u(t)\|_{L^{2}}\lesssim \gamma,\\
			|\ell(t)|+|\beta(t)|+|a(t)|+|b(t)|&\lesssim \|u(t)-q\|_{H^{1}}+\|\partial_{t}u(t)\|_{L^{2}}\lesssim \gamma,
			\end{aligned}
		\end{equation} 
	and for all $n=1,\cdots,N$ and $(i,j)\in I_{q}$,
	\begin{equation}\label{equ:orthvp}
	\langle \varphi_{1},\partial_{x_{n}}Q\rangle=\langle \varphi_{1},\Psi_{ij}\rangle=\langle \vp,\Phi\rangle=0,
	\end{equation}
	\begin{equation}\label{equ:orthvpp}
		\langle \varphi_{2},\partial_{x_{n}}Q\rangle=\langle \varphi_{2},\Psi_{ij}\rangle=\langle \vpp,\Phi\rangle=0.
	\end{equation}
\end{proposition}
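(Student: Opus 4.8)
The plan is to prove the standard modulation result (Proposition~\ref{pro:dec}) by an implicit function theorem argument applied pointwise in time, followed by a regularity/uniqueness bootstrap. First I would set up the map whose zero set encodes the orthogonality conditions \eqref{equ:orthvp}--\eqref{equ:orthvpp}. Given $\vec u$ close to the orbit of $q$, decompose $\vec u$ according to \eqref{def:vp}, viewing the parameters $(z,\ell,\theta,\beta,a,b) \in \m R^N \times \m R^N \times \m R^{\# I_q} \times \m R^{\# I_q} \times \m R \times \m R$ as unknowns, and define
\[
\mathcal F(\vec u; z,\ell,\theta,\beta,a,b) = \Big( \langle \varphi_1, \partial_{x_n} Q \rangle, \langle \varphi_1, \Psi_{ij} \rangle, \langle \varphi_1, \Phi \rangle, \langle \varphi_2, \partial_{x_n} Q \rangle, \langle \varphi_2, \Psi_{ij} \rangle, \langle \varphi_2, \Phi \rangle \Big)_{n,(i,j)},
\]
where $\varphi_1, \varphi_2$ are the remainders extracted from \eqref{def:vp}. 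When $\vec u = (q,0)$ exactly, the choice $(z,\ell,\theta,\beta,a,b) = (\boldsymbol 0, \boldsymbol 0, \boldsymbol 0, \boldsymbol 0, 0, 0)$ gives $\varphi_1 = \varphi_2 = 0$, hence $\mathcal F = 0$. The key computation is that the differential of $\mathcal F$ with respect to the parameters, evaluated at this base point, is invertible.

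The differential is (up to reordering) block-diagonal between the $\varphi_1$-block and the $\varphi_2$-block. For the $\varphi_1$-block, differentiating in $z_m$, $\theta_{i'j'}$, and $a$ produces, at the base point, the Gram-type matrix of the family $\{\partial_{x_n} q,\ \Omega_{ij}q,\ \phi\}$ against itself (using \eqref{eq:derivative_T}, \eqref{equ:kert}, and recalling that $\partial_a(a\Phi) = \Phi$); since $\ker \q L_q = \q Z_q \oplus \Span\{\phi\}$ with $\phi \perp \q Z_q$ by \eqref{equ:phi}, and $\{\partial_{x_n}q\} \cup \{\Omega_{ij}q : (i,j)\in I_q\}$ is a basis of $\q Z_q$, this family is linearly independent, so its Gram matrix is positive definite, hence invertible. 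The $\varphi_2$-block is identical: differentiating the second component $-\ell\cdot\nabla Q + \sum \beta_{ij}\Psi_{ij} + b\Phi$ with respect to $\ell_m$, $\beta_{i'j'}$, $b$ yields the same Gram matrix against $\{\partial_{x_n}Q, \Psi_{ij}, \Phi\}$. Thus $D_{\text{param}}\mathcal F$ is invertible at the base point, and the implicit function theorem on $H^1 \times L^2$ (the dependence on $\vec u$ is smooth because $\mathcal F$ is built from bounded linear and continuous bilinear maps, given the smoothness and exponential decay of $q$, $\phi$, $Y_k$ and their derivatives noted just before the statement) gives, for $\gamma_0$ small, unique small parameters solving $\mathcal F = 0$ for each fixed $t$, together with the bound $|z| + |\ell| + |\theta| + |\beta| + |a| + |b| + \|\vec\varphi\|_{\E} \lesssim \|u(t)-q\|_{H^1} + \|\partial_t u(t)\|_{L^2}$, which is \eqref{small}; here one also uses the hypothesis \eqref{est:dec} with the infimum over translations to first reduce to the case where $u(t)$ is genuinely $H^1$-close to $q$ (not merely to a translate far away), absorbing the translation into $z$.

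Next I would upgrade pointwise-in-time existence to a $\mathscr C^1$ curve of parameters on $[T_1,T_2]$. Since $\vec u \in \mathscr C([T_1,T_2], H^1\times L^2) \cap \mathscr C^1([T_1,T_2], H^{-1}\times L^2)$ by the Cauchy theory, and the implicit function theorem provides the parameters as a $\mathscr C^1$ function of $\vec u$ in a neighborhood (the inverse of $D_{\text{param}}\mathcal F$ depending continuously on $\vec u$ near the base point), the composition is $\mathscr C^1$ in $t$; one checks that the orthogonality conditions involve $\varphi_1$ only through $H^1$ pairings and $\varphi_2$ through $L^2$ pairings against fixed smooth decaying functions, so no regularity is lost. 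Uniqueness of the $\mathscr C^1$ functions follows from the local uniqueness in the implicit function theorem together with a continuity/connectedness argument along $[T_1,T_2]$: two such curves agree at one point and the set where they agree is open and closed.

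The main obstacle is bookkeeping rather than conceptual: one must verify carefully that the linearization of \eqref{def:vp} in the parameters, at the base point, really does decouple into the two identical Gram blocks and that the off-diagonal terms (e.g., the $z$-derivative of $\Psi_{ij}$, the $\theta$-derivative of $a\Phi$, etc.) vanish at $(z,\ell,\theta,\beta,a,b) = 0$ or contribute only to higher order — this is where \eqref{est:kernel} and \eqref{equ:kert} are used, and where the condition $p > 2$ (ensuring $f'$ is $\mathscr C^1$, so $Q$ depends smoothly enough on its parameters) enters. One also needs the global-in-space uniformity of the exponential decay of $q, \phi, Y_k$ so that all the scalar products defining $\mathcal F$ and its differential are well-defined and continuous in $\vec u \in H^1 \times L^2$; this is guaranteed by the elliptic regularity recalled in the introduction and by \eqref{equ:phi}. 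Granting these, the argument is the textbook modulation lemma, and I expect the write-up to be short.
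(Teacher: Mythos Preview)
Your proposal is correct and matches the paper's approach: the paper simply states that the proof ``relies on a standard argument based on the Implicit function Theorem (see e.g.~[CMkg, Appendix~B])'' and omits it entirely. Your write-up is a faithful expansion of exactly that standard argument, with the Gram-matrix invertibility of the family $\{\partial_{x_n}q,\ \Omega_{ij}q,\ \phi\}$ as the key input, which is indeed what the paper relies on implicitly (and uses explicitly later, e.g.\ in the proof of Lemma~\ref{le:equzab}).
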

\begin{proof}
   The proof of the decomposition result relies on a standard argument based
	on the Implicit function Theorem (See \emph{e.g.}~\cite[Appendix B]{CMkg}) and we omit it.
	\end{proof}

Second, we derive the equation of $\vec{\varphi}$ from~\eqref{equDKG} and~\eqref{def:vp}.

\begin{lemma}[Equation of $\vec{\varphi}$]
	In the contex of Proposition \ref{pro:dec}, we have
	\begin{equation}\label{equ:vp}
	\left\{ \begin{aligned}
	&\partial_{t}\varphi_{1}=\varphi_{2}+\Mod_{1}+G_{1},\\
	&\partial_{t}\varphi_{2}=\Delta \varphi_{1}-\varphi_{1}-2\alpha\varphi_{2}+f(Q+V+\vp)-f(Q+V)+\Mod_{2}+G_{2}+G,
	\end{aligned}\right.
	\end{equation}
	where
	\begin{align*}
	\Mod_{1}
	&:=\big(\dot{z}-\ell\big) \cdot \nabla Q-\sum_{(i,j)\in I_{q}}\big(\dot{\theta}_{ij}-\beta_{ij}\big)\Psi_{ij}-(\dot{a}-b)\Phi,\\
	\Mod_{2} & :=\big(\dot{\ell}+2\alpha\ell\big)\cdot \nabla Q-\sum_{(i,j)\in I_{q}}\big(\dot{\beta}_{ij}+2\alpha \beta_{ij}\big)\Psi_{ij}-\left(\dot{b}+2\alpha b\right)\Phi,
	\end{align*}
	and $G = f(Q+V) - f(Q) - f'(Q)V$ is defined in \eqref{def:G}, 
	\begin{equation*}
	\begin{aligned}
	G_{1}:=&a\dot{z}\cdot\nabla \Phi-a\sum_{(i,j)\in I_{q}}\dot{\theta}_{ij}\Phi_{ij},\\
	G_{2} :=&\sum_{(i,j)\in I_{q}}\dot{\theta}_{ij}\left(\ell\cdot\nabla \Psi_{ij}\right)+\sum_{(i,j)\in I_{q}}\beta_{ij}\left(\dot{z}\cdot\nabla \Psi_{ij}\right)-\left(\ell\cdot\nabla\right)\left(\dot{z}\cdot\nabla \right)Q\\
	&-\sum_{(i',j')\in I_{q}}\sum_{(i,j)\in I_{q}}\dot{\theta}_{i'j'}{\beta}_{ij}\Psi_{ij}^{i'j'}+b\dot{z}\cdot\nabla \Phi-\sum_{(i,j)\in I_{q}}b\dot{\theta}_{ij}\Phi_{ij}.
	\end{aligned}
	\end{equation*}
\end{lemma}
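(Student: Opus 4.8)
The plan is to obtain \eqref{equ:vp} by a direct computation: differentiate the ansatz \eqref{def:vp} in time, substitute the equation \eqref{equDKG}, and use the chain-rule formulas \eqref{equ:dtQPhiY} together with the elliptic equations satisfied by $Q$ and $\Phi$. No estimate is involved; the content is entirely the bookkeeping of terms, and the argument covers the non-degenerate case as well by the convention $a\equiv 0$, $\phi=0$.

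First I would read off from \eqref{def:vp} that $u = Q + V + \varphi_1$ (recall $V = a\Phi$) and $\partial_t u = -\ell\cdot\nabla Q + \sum_{(i,j)\in I_q}\beta_{ij}\Psi_{ij} + b\Phi + \varphi_2$. Differentiating the identity $u = Q + a\Phi + \varphi_1$ in time and inserting the expressions for $\partial_t Q$ and $\partial_t\Phi$ from \eqref{equ:dtQPhiY}, one finds $\partial_t\varphi_1 = \partial_t u - \partial_t Q - \dot a\Phi - a\partial_t\Phi$. Equating this with the second component above and collecting the terms carrying a modulation defect $\dot z-\ell$, $\dot\theta_{ij}-\beta_{ij}$ or $\dot a-b$ into $\Mod_1$, while the leftover $a\dot z\cdot\nabla\Phi - a\sum_{(i,j)\in I_q}\dot\theta_{ij}\Phi_{ij}$ is by definition $G_1$, gives the first line of \eqref{equ:vp}.

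For the second line I would differentiate $\partial_t u = -\ell\cdot\nabla Q + \sum\beta_{ij}\Psi_{ij} + b\Phi + \varphi_2$ once more in time, using \eqref{equ:dtQPhiY} again (now also for $\partial_t\Psi_{ij}$) and the commutation $\partial_t\nabla Q = \nabla\partial_t Q$. The terms quadratic in the parameters that appear — $(\ell\cdot\nabla)(\dot z\cdot\nabla)Q$, $\dot\theta_{ij}(\ell\cdot\nabla\Psi_{ij})$, $\beta_{ij}(\dot z\cdot\nabla\Psi_{ij})$, $\dot\theta_{i'j'}\beta_{ij}\Psi_{ij}^{i'j'}$, $b(\dot z\cdot\nabla\Phi)$ and $b\dot\theta_{ij}\Phi_{ij}$ — reorganize, with the signs dictated by the chain rule, into exactly $-G_2$. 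On the other side I replace $\partial_{tt}u$ by $\Delta u - u - 2\alpha\partial_t u + f(u)$ from \eqref{equDKG}, write $u = Q + V + \varphi_1$, and use $-\Delta Q + Q - f(Q) = 0$ together with $-\Delta\Phi + \Phi - f'(Q)\Phi = 0$ (the latter being $\q L_q\phi = 0$ transported by $\q T_{(z,\theta)}$) to rewrite $\Delta u - u = \Delta\varphi_1 - \varphi_1 - f(Q) - f'(Q)V$. Then the algebraic splitting $f(u) - f(Q) - f'(Q)V = \big(f(Q+V+\varphi_1) - f(Q+V)\big) + \big(f(Q+V) - f(Q) - f'(Q)V\big)$, whose last bracket is $G$ by \eqref{def:G}, isolates the nonlinear contributions. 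Matching both sides and gathering the remaining terms linear in the parameters, $(\dot\ell+2\alpha\ell)\cdot\nabla Q$, $(\dot\beta_{ij}+2\alpha\beta_{ij})\Psi_{ij}$ and $(\dot b+2\alpha b)\Phi$, into $\Mod_2$ yields the second line of \eqref{equ:vp}.

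The only point requiring care is this sign-tracking: one must check that every product of two parameter-derivatives produced by the second time-derivative of the ansatz is matched, with the correct sign, by the corresponding term in $G_2$, and that no term is lost when invoking the elliptic relations (which is automatic since $\nabla Q$, $\Phi$ and each $\Psi_{ij}$ lie in $\ker(-\Delta+1-f'(Q))$). Since it is a finite, if tedious, expansion, I would carry it out term by term and not dwell further on it.
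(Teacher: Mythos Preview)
Your proposal is correct and follows essentially the same approach as the paper: differentiate the ansatz \eqref{def:vp}, insert the chain-rule identities \eqref{equ:dtQPhiY}, substitute \eqref{equDKG} for $\partial_{tt}u$, and use the elliptic relations $-\Delta Q+Q-f(Q)=0$ and $-\Delta\Phi+\Phi-f'(Q)\Phi=0$ to simplify. The paper's proof is precisely this term-by-term computation, and your identification of where $\Mod_1$, $\Mod_2$, $G_1$, $G_2$ and $G$ arise matches it exactly.
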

\begin{proof}
	First, from~\eqref{equ:dtQPhiY} and~\eqref{def:vp},
	\begin{equation*}
	\begin{aligned}
	\partial_{t}\varphi_{1}
	=&\partial_{t}u-\partial_{t}Q-a\partial_{t}\Phi-\dot{a}\Phi\\
	=&\vpp+\big(\dot{z}-\ell\big) \cdot \nabla Q-\sum_{(i,j)\in I_{q}}\big(\dot{\theta}_{ij}-\beta_{ij}\big)\Psi_{ij}-(\dot{a}-b)\Phi\\
	&+a\dot{z}\cdot\nabla \Phi-a\sum_{(i,j)\in I_{q}}\dot{\theta}_{ij}\Phi_{ij} =
	\vpp+\Mod_{1}+G_{1}.
	\end{aligned}
	\end{equation*}
	Using~\eqref{equ:dtQPhiY} and~\eqref{def:vp} again,
	\begin{equation*}
	\begin{aligned}
	\partial_{t}\vpp=&\partial_{tt}u+\dot{\ell}\cdot \nabla Q-\sum_{(i,j)\in I_{q}}\dot{\beta}_{ij}\Psi_{ij}-\dot{b}\Phi\\
	&+\ell\cdot \nabla \partial_{t}Q-\sum_{(i,j)\in I_{q}}\beta_{ij}\partial_{t}\Psi_{ij}-b\partial_{t}\Phi\\
	=&\partial_{tt}u+\dot{\ell}\cdot \nabla Q-\sum_{(i,j)\in I_{q}}\dot{\beta}_{ij}\Psi_{ij}-\dot{b}\Phi+G_{2}.
	\end{aligned}
	\end{equation*}
	From~\eqref{equDKG},~\eqref{def:vp},~$-\Delta Q+Q-f(Q)=0$ and~$-\Delta\Phi+\Phi-f'(Q)\Phi=0$,
	\begin{equation*}
	\begin{aligned}
	\partial_{tt}u=&\Delta u-u-2\alpha \partial_{t}u+f(u)\\
	=&\Delta \vp-\vp-2\alpha\vpp+f(Q+V+\vp)-f(Q+V)\\
	&+2\alpha \ell \cdot \nabla Q-2\alpha \sum_{(i,j)\in I_{q}} \beta_{ij}\Psi_{ij}-2\alpha b\Phi+G.
	\end{aligned}
	\end{equation*}
	Therefore, 
	\begin{align*}
	\partial_{t}\vpp & =\Delta \vp-\vp-2\alpha\vpp+f(Q+V+\vp)-f(Q+V)+G_{2}+G\\
	& \quad +\left(\dot{\ell}+2\alpha \ell\right)\cdot\nabla Q-\sum_{(i,j)\in I_{q}}\left(\dot{\beta}_{ij}+2\alpha \beta_{ij}\right)\Psi_{ij}-\left(\dot{b}+2\alpha b \right)\Phi. \qedhere
	\end{align*}
	\end{proof}

Third, we derive the control of geometric parameters from orthogonality conditions~\eqref{equ:orthvp} and~\eqref{equ:orthvpp}. Our goal here is to get an ODE system on the modulations parameters, at leading order, with bounds on the remainder terms as squares or higher powers of $|a|$ and
\begin{equation} \label{def:N}
\mathcal{N}: =\|\vec{\varphi}\|_{\E}+|\ell|+|\beta|+|b|.
\end{equation}
Recall that we defined $\bar{p}=\min\{3,p\}>2$.

\begin{lemma}\label{le:equzab}
	In the context of Proposition~\ref{pro:dec}, the following holds.
	\begin{enumerate}
	\item \emph{Control of non-degenerate directions.} We have 
	\begin{align}
	|\dot{z}-\ell|+\big|\dot{\theta}-\beta\big|
	&\lesssim  \mathcal{N}^{2}+|a|\mathcal{N},\label{equ:zta}\\
	|\dot{\ell}+2\alpha\ell|+|\dot{\beta}+2\alpha \beta|
	&\lesssim \mathcal{N}^{2}+|a|\mathcal{N}+|a|^{\bar{p}}.\label{equ:bl}
	\end{align}
	\item \emph{Control of extra direction.} For $q$ be a degree-1 excited state, we have
	\begin{align}
	|\dot{a}-b|&\lesssim \mathcal{N}^{2}+|a|\mathcal{N},\label{est:a}\\
	|\dot{b}+2\alpha b+2\alpha a^{2}|&\lesssim \mathcal{N}^{2}+|a|\mathcal{N}+|a|^{\bar{p}}.\label{est:ba2}
	\end{align}
\end{enumerate}
\end{lemma}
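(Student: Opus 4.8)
The strategy is the standard modulation-equation derivation: take the $H^1 \times L^2$ inner product of the $\vec\varphi$-system \eqref{equ:vp} with the various directions appearing in the decomposition \eqref{def:vp}, differentiate the orthogonality conditions \eqref{equ:orthvp}--\eqref{equ:orthvpp} in time, and solve the resulting linear system for the modulation-parameter time derivatives, keeping careful track of which terms are genuinely quadratic in $\mathcal N$ and which produce the leading $2\alpha a^2$ term in \eqref{est:ba2}.

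First I would establish the $\mathscr C^1$ regularity of the parameters (from Proposition \ref{pro:dec}) so that all differentiations below are licit, and record the pointwise exponential decay of $\nabla Q$, $\Psi_{ij}$, $\Phi$, $\Phi_{ij}$, $\Psi_{ij}^{i'j'}$ and their $\theta$-expansions \eqref{est:kernel}. Next, differentiating $\langle \varphi_1, \partial_{x_n} Q\rangle = 0$ etc.\ in $t$ and substituting $\partial_t \varphi_1 = \varphi_2 + \Mod_1 + G_1$ from \eqref{equ:vp}, the main terms are $\langle \Mod_1, \partial_{x_n} Q\rangle$, $\langle \Mod_1, \Psi_{ij}\rangle$, $\langle \Mod_1, \Phi\rangle$; using the (near-)orthogonality of $\nabla Q$, $\{\Psi_{ij}\}$, $\Phi$ and the fact that $\mathcal Z_q \perp \phi$ (from \eqref{equ:phi}), the matrix of this system is close to block-diagonal and invertible, so that $\dot z - \ell$, $\dot\theta - \beta$, $\dot a - b$ are controlled by the remaining terms $\langle \varphi_2, \partial_t(\cdot)\rangle$, $\langle G_1, \cdot\rangle$ and the off-diagonal coupling. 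One checks $\langle \varphi_2, \partial_t(\partial_{x_n}Q)\rangle = O(\mathcal N^2 + |a|\mathcal N)$ using \eqref{equ:dtQPhiY} and $|\dot z|+|\dot\theta| \lesssim \mathcal N + |a|$ (a bootstrap already yields this crude bound), and $G_1 = O(|a|(|\dot z|+|\dot\theta|))$; this gives \eqref{equ:zta} and \eqref{est:a}. For \eqref{equ:bl} and \eqref{est:ba2} one instead differentiates $\langle \varphi_2, \partial_{x_n}Q\rangle = 0$, etc., substitutes $\partial_t\varphi_2$ from the second equation of \eqref{equ:vp}, and uses $-\Delta Q + Q - f(Q) = 0$, $-\Delta\Phi+\Phi-f'(Q)\Phi=0$ together with $\langle \q L_q \varphi_1, \partial_{x_n}Q\rangle = \langle \varphi_1, \q L_q \partial_{x_n}Q\rangle = 0$ (and similarly against $\Psi_{ij}$, $\Phi$, since all lie in $\ker \q L_q$); the terms $f(Q+V+\varphi_1) - f(Q+V)$ give, via Taylor \eqref{est:tay1}, contributions $O(\mathcal N^2 + |a|\mathcal N)$ after pairing with exponentially-decaying directions, while $G = f(Q+V)-f(Q)-f'(Q)V$ expands by \eqref{est:tay6} as $\tfrac12 f''(Q)V^2 + O(|a|^{\bar p})$.

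The crucial computation — and the main obstacle — is the emergence of the $2\alpha a^2$ term in \eqref{est:ba2}. Pairing $G$ against $\Phi$ and using $V = a\Phi$ gives $\tfrac12 a^2 \langle f''(Q)\Phi^2, \Phi\rangle + O(|a|^{\bar p})$; now $\Phi$, $\Phi^2 \cdot$ are (up to the rotation $R_\theta$, hence up to $O(|\theta|) = O(\mathcal N)$ corrections by \eqref{est:kernel}) built from $\phi \in \ker\q L_q$, so $\langle f''(Q)\Phi^2,\Phi\rangle = \langle f''(q)\phi^2,\phi\rangle + O(\mathcal N)$, and by the normalization \eqref{def:kappa} one has $\langle f''(q)\phi^2,\phi\rangle = E'''(q)(\phi,\phi,\phi) = -4\alpha\|\phi\|_{L^2}^2$. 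Since the coefficient in front of $\dot b + 2\alpha b$ in the relevant equation is $\langle \Phi,\Phi\rangle = \|\phi\|_{L^2}^2 + O(\mathcal N)$, dividing through produces exactly the term $2\alpha a^2$ with the sign fixed by \eqref{def:kappa}, all other contributions being absorbed into $\mathcal N^2 + |a|\mathcal N + |a|^{\bar p}$. One must be careful that cross terms such as $\langle f''(Q)\Phi^2, \Psi_{ij}\rangle$ and $\langle f''(Q)\Phi^2, \partial_{x_n}Q\rangle$ vanish at leading order: this is precisely the cancellation \eqref{equ:can}, since $\phi \in \ker\q L_q$ while $\Omega_{ij}q, \partial_{x_n}q \in \mathcal Z_q$, which is why \eqref{equ:bl} for $\dot\beta, \dot\ell$ has no analogous quadratic-in-$a$ forcing. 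The remaining work is bookkeeping: collecting the $G_2$ terms (all manifestly $O(\mathcal N^2 + |a|\mathcal N)$ by inspection of their definition since each is a product of a parameter derivative bounded by $\mathcal N + |a|$ with a parameter or $\vec\varphi$ bounded by $\mathcal N$), and verifying that the linear system remains uniformly invertible for $\gamma_0$ small.
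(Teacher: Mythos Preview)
Your proposal is correct and follows essentially the same approach as the paper's proof: differentiate the orthogonality conditions \eqref{equ:orthvp}--\eqref{equ:orthvpp}, substitute the $\vec\varphi$-equations \eqref{equ:vp}, invert the Gram matrix of the basis of $\mathcal Z_q$ for the non-degenerate directions, invoke the cancellation \eqref{equ:can} so that $\langle G,\partial_{x_n}Q\rangle$ and $\langle G,\Psi_{ij}\rangle$ contribute only $O(|a|^{\bar p})$, and extract the $2\alpha a^2$ term from $\langle G,\Phi\rangle$ via the normalization \eqref{def:kappa}. One minor slip worth correcting: the identity $\langle f''(Q)\Phi^2,\Phi\rangle=\langle f''(q)\phi^2,\phi\rangle$ (and likewise $\langle\Phi,\Phi\rangle=\|\phi\|_{L^2}^2$) is \emph{exact} by the change of variables $y=R_\theta(x-z)$, so no $O(|\theta|)$ correction is needed --- which is fortunate, since $|\theta|$ is not part of $\mathcal N$ (see \eqref{def:N}) and is only bounded by $\gamma$, not by $\mathcal N$.
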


\begin{proof}
	Proof of (i). First, we differentiate the orthogonality $\langle \varphi_{1},\partial_{x_{n}}Q\rangle=0$ in~\eqref{equ:orthvp},
	\begin{equation*}
	\begin{aligned}
	0=&\frac{\d}{\d t}\langle\vp,\partial_{x_{n}}Q\rangle=\langle \partial_{t} \vp,\partial_{x_{n}}Q\rangle+\langle \vp,\partial_{t}\partial_{x_{n}}Q\rangle.
	\end{aligned}
	\end{equation*}
	Using~\eqref{equ:orthvp} and~\eqref{equ:vp},
	\begin{equation*}
	\begin{aligned}
	\langle \partial_{t} \vp,\partial_{x_{n}}Q\rangle 
	=\langle \Mod_{1},\partial_{x_{n}}Q\rangle
	+\langle G_{1},\partial_{x_{n}}Q\rangle.
	\end{aligned}
	\end{equation*}
	From~\eqref{equ:phi},~\eqref{equ:kert},~\eqref{est:kernel},~\eqref{small}, the expression of $\rm{Mod}_{1}$ and change of variables,
	\begin{equation*}
	\begin{aligned}
	&\langle \Mod_{1},\partial_{x_{n}}Q\rangle\\
	&= \langle (\dot{z}-\ell)\cdot\nabla Q,\partial_{x_{n}}Q\rangle
	-\sum_{(i,j)\in I_{q}}\left(\dot{\theta}_{ij}-\beta_{ij}\right)\langle \Psi_{ij},\partial_{x_{n}}Q\rangle-(\dot{a}-b)\langle \Phi,\partial_{x_{n}}Q\rangle\\
	&=\langle (\dot{z}-\ell)\cdot \nabla q, {\partial_{x_n} q} \rangle
	-\sum_{(i,j)\in I_{q}}\left(\dot{\theta}_{ij}-\beta_{ij}\right)\langle \Omega_{ij}q,\partial_{x_{n}}q\rangle
	+O(\gamma(|\dot{z}-\ell|+|\dot{\theta}-\beta|)).
	\end{aligned}
	\end{equation*}
	From the expression of $G_{1}$ and~\eqref{small},
	\begin{equation*}
	\begin{aligned}
	\left|\langle G_{1},\partial_{x_{n}}Q\rangle\right|
	&\lesssim |a|\left(|\dot{z}-\ell|+|\ell|+|\dot{\theta}-\beta|+|\beta|\right)\\
	&\lesssim \gamma \left(|\dot{z}-\ell|+|\dot{\theta}-\beta|\right)+|a|\mathcal{N}.
	\end{aligned}
	\end{equation*}
	Next, using again~\eqref{equ:dtQPhiY},
	\begin{equation}\label{equ:dtxq}
	\begin{aligned}
	\partial_{t}\partial_{x_{n}}Q&=\partial_{x_{n}}\partial_{t}Q=-\dot{z}\cdot\nabla\partial_{x_{n}}Q+\sum_{(i,j)\in I_{q}}\dot{\theta}_{ij}\partial_{x_{n}}\Psi_{ij}.
	\end{aligned}
	\end{equation}
	Thus, from~\eqref{small} and the Sobolev embedding Theorem,
	\begin{equation*}
	\begin{aligned}
	\left|\langle \vp,\partial_{t}\partial_{x_{n}}Q\rangle\right|
	&\lesssim \|\vec{\varphi}\|_{\E}\left(|\dot{z}-\ell|+|\ell|+|\dot{\theta}-\beta|+|\beta|\right)\\
	&\lesssim \gamma\left(|\dot{z}-\ell|+|\dot{\theta}-\beta|\right)+\mathcal{N}^{2}.
	\end{aligned}
	\end{equation*}
	Combining above estimates, we have
	\begin{equation*}
	\begin{aligned}
	&\langle (\dot{z}-\ell)\cdot \nabla q, \partial_{x_n} q\rangle
	-\sum_{(i,j)\in I_{q}}\left(\dot{\theta}_{ij}-\beta_{ij}\right)\langle \Omega_{ij}q,\partial_{x_{n}}q\rangle\\
	&=O\left(\gamma\left(|\dot{z}-\ell|+|\dot{\theta}-\beta|\right)+\mathcal{N}^{2}+|a|\mathcal{N}\right).
	\end{aligned}
	\end{equation*}
	{ This gives $N$ inequalities. Proceeding similarly for the $\# I_q$ orthogonality conditions $\langle \varphi_1, \Omega_{ij} Q \rangle =0$ in~\eqref{equ:orthvp}, and using the fact that the family 
	\[ \left\{\partial_{x_{n}}q,1\le n\le N; \Omega_{ij}q,(i,j)\in I_{q}\right\} \]
	 is linearly independent, so that its Gram matrix is invertible,} we obtain
	\begin{equation*}
	\left|\dot{z}-\ell\right|+\big|\dot{\theta}-\beta \big|\lesssim
	\gamma \big(\left|\dot{z}-\ell\right|+\big|\dot{\theta}-\beta\big|\big)
	+\mathcal{N}^{2}+|a|\mathcal{N},
	\end{equation*}
	which implies~\eqref{equ:zta}, upon taking $\gamma$ small enough.
	
	Second, we differentiate the orthogonality $\langle \vpp,\partial_{x_{n}}Q \rangle=0$ in~\eqref{equ:orthvpp},
	\begin{equation*}
	\begin{aligned}
	0=&\frac{\d}{\d t}\langle\vpp,\partial_{x_{n}} Q \rangle=\langle \partial_{t} \vpp,\partial_{x_{n}}Q\rangle
	+\langle \vpp,\partial_{t}\partial_{x_{n}}Q\rangle.
	\end{aligned}
	\end{equation*}
	From~\eqref{equ:vp}, we have
	\begin{equation*}
	\begin{aligned}
	\langle \partial_{t} \vpp, \partial_{x_{n}}Q \rangle
	&=\langle \Delta\vp-\vp+f'(Q)\vp,\partial_{x_{n}}Q\rangle-2\alpha\langle \vpp,\partial_{x_{n}}Q\rangle\\
	&+\langle G_{2},\partial_{x_{n}}Q\rangle
	+\langle R,\partial_{x_{n}}Q\rangle+\langle G,\partial_{x_{n}}Q\rangle+\langle \Mod_{2},\partial_{x_{n}}Q\rangle,
	\end{aligned}
	\end{equation*}
	where
	\begin{equation*}
	R=f(Q+V+\vp)-f(Q+V)-f'(Q)\vp.
	\end{equation*}
	Based on $-\Delta \partial_{x_{n}}q+\partial_{x_{n}}q-f'(q)\partial_{x_{n}}q=0$, integration by parts and~\eqref{equ:orthvpp},
	\begin{equation*}
	\langle \Delta\vp-\vp+f'(Q)\vp,\partial_{x_{n}}Q\rangle-2\alpha\langle \vpp,\partial_{x_{n}}Q\rangle=0.
	\end{equation*}
	Then, by the expression of $G_{2}$,~\eqref{small} and~\eqref{equ:zta},
	\begin{equation*}
	\begin{aligned}
	\left|\langle G_{2}, \partial_{x_{n}}Q\rangle\right|
	&\lesssim \left(|\dot{z}-\ell|+|\dot{\theta}-\beta|+|\ell|+|\beta|\right)\left(|\ell|+|\beta|+|b|\right)\\
	&\lesssim \mathcal{N}^{2}\left(\mathcal{N}+|a|+1\right)\lesssim \mathcal{N}^{2}.
	\end{aligned}
	\end{equation*}
		Next, using~\eqref{est:tay1},~\eqref{est:tay3} and~\eqref{small},
	\begin{equation*}
	\begin{aligned}
	\left|R\right|
	&\lesssim \left|f(Q+V+\vp)-f(Q+V)-f'(Q+V)\vp\right|+\left|(f'(Q+V)-f'(Q))\vp\right|\\
	&\lesssim |\vp|^{2}+|\vp|^{p}+(|V|+|V|^{p-1})|\vp|
	\lesssim |\vp|^{2}+|\vp|^{p}+|a|\left(|\Phi|+|\Phi|^{p-1}\right)|\vp|.
	\end{aligned}
	\end{equation*}
	It follows that 
	\begin{equation*}
	\begin{aligned}
	\left|\langle R,\partial_{x_{n}}Q\rangle\right|
	&\lesssim \int_{\RR^{N}}\left(|\vp|^{2}+|\vp|^{p}+|\vp||V|\right)\d x\lesssim \mathcal{N}^{2}+|a|\mathcal{N}.
	\end{aligned}
	\end{equation*}
	For $q$ be a non-degenerate state, we have $a=0$ which implies $G=0$. For $q$ be a degree-1 excited state, from~\eqref{est:tay6}, we have 
	\begin{equation*}
	\left|G-\frac{1}{2}f''(Q)V^{2}\right|\lesssim |V|^{3}+|V|^{p}\lesssim \left(|a|^{3}+|a|^{p}\right)\left(|\Phi|^{3}+|\Phi|^{p}\right).
	\end{equation*}
	It follows that 
	\begin{align*}
	&\left|\langle G,\partial_{x_{n}}Q\rangle-\langle \frac{1}{2}f''(Q)V^{2}, \partial_{x_{n}}Q\rangle\right|\\
	&\lesssim \left(|a|^{3}+|a|^{p}\right)\int_{\RR^{N}}\left(|\Phi|^{3}+|\Phi|^{p}\right)|\partial_{x_{n}}Q|\d x\lesssim |a|^{3}+|a|^{p}.
	\end{align*}
	Note that, by~\eqref{equ:can},~\eqref{equ:kert} and change of variables,
	\begin{equation*}
	\langle \frac{1}{2}f''(Q)V^{2},\partial_{x_{n}}Q\rangle=0.
	\end{equation*} 
	Then, from~\eqref{equ:phi},~\eqref{equ:kert},~\eqref{est:kernel},~\eqref{small}, the expression of $\Mod_{2}$ and change of variables,
	\begin{equation*}
	\begin{aligned}
	&\langle \Mod_{2},\partial_{x_{n}}Q\rangle\\
	&=\langle (\dot{\ell}+2\alpha\ell)\cdot \nabla Q, \partial_{x_{n}}Q\rangle 
	-\sum_{(i,j)\in I_{q}}\left(\dot{\beta}_{ij}+2\alpha \beta_{ij}\right)\langle \Psi_{ij},\partial_{x_{n}}Q\rangle-(\dot{b}+2\alpha b)\langle \Phi,\partial_{x_{n}}Q\rangle\\
	&=\langle (\dot{\ell}+2\alpha\ell)\cdot\nabla q,\partial_{x_{n}}q\rangle
	-\sum_{(i,j)\in I_{q}}\left(\dot{\beta}_{ij}+2\alpha\beta_{ij}\right)\langle \Omega_{ij}q,\partial_{x_{n}}q\rangle\\
	&\quad +O\left(\gamma\left(|\dot{\ell}+2\alpha\ell|+|\dot{\beta}+2\alpha\beta|\right)\right).
	\end{aligned}
	\end{equation*}
	Last, from~\eqref{small},~\eqref{equ:zta} and~\eqref{equ:dtxq},
	\begin{equation*}
	\begin{aligned}
	\left|\langle \vpp,\partial_{t}\partial_{x_{n}}Q\rangle\right|
	&\lesssim \|\vec{\varphi}\|_{\E}\left(|\dot{z}-\ell|+|\ell|+|\dot{\theta}-\beta|+|\beta|\right)\lesssim \mathcal{N}^{2}\left(\mathcal{N}+|a|+1\right)\lesssim \mathcal{N}^{2}.
	\end{aligned}
	\end{equation*}
	In conclusion of the previous estimates, the orthogonality condition $\langle \vpp, \partial_{x_{n}}Q\rangle=0$ gives the following,
	\begin{equation*}
	\begin{aligned}
	&\langle (\dot{\ell}+2\alpha\ell)\cdot\nabla q,\partial_{x_{n}}q\rangle
	-\sum_{(i,j)\in I_{q}}\left(\dot{\beta}_{ij}+2\alpha\beta_{ij}\right)\langle \Omega_{ij}q,\partial_{x_{n}}q\rangle\\
	&+O\left(\gamma\left(|\dot{\ell}+2\alpha\ell|+|\dot{\beta}+2\alpha\beta|\right)\right)=O\left(\mathcal{N}^{2}+|a|\mathcal{N}+|a|^{3}+|a|^{p}\right).
	\end{aligned}
	\end{equation*}
	Proceeding similarly for the orthogonality conditions $\langle \phi_2, \Omega_{ij} Q \rangle =0$ in~\eqref{equ:orthvpp} and using again the fact that family $\left\{\partial_{x_{n}}q,1\le n\le N;\Omega_{ij}q,(i,j)\in I_{q}\right\}$ is linearly independent, we find~\eqref{equ:bl} for $\gamma$ small enough.
	
	\bigskip
	
	Proof of (ii). We prove~\eqref{est:ba2}; the proof of~\eqref{est:a} is same as~\eqref{equ:zta}.
	We differentiate the orthogonality $\langle \vpp, \Phi\rangle=0 $ in~\eqref{equ:orthvpp},
	\begin{equation*}
		\begin{aligned}
			0=&\frac{\d}{\d t}\langle\vpp,\Phi \rangle=\langle \partial_{t} \vpp,\Phi\rangle+\langle \vpp,\partial_{t}\Phi\rangle.
		\end{aligned}
	\end{equation*}
	Using again~\eqref{equ:vp},
	\begin{equation*}
		\begin{aligned}
			\langle \partial_{t} \vpp,\Phi\rangle
			&=\langle \Delta\vp-\vp+f'(Q)\vp,\Phi\rangle-2\alpha\langle \vpp,\Phi\rangle\\
			&+\langle G_{2},\Phi\rangle+\langle R,\Phi\rangle+\langle G+ \Mod_{2},\Phi\rangle,
		\end{aligned}
	\end{equation*}
	where
	\begin{equation*}
		R=f(Q+V+\vp)-f(Q+V)-f'(Q)\vp.
	\end{equation*}
	From $-\Delta \Phi+\Phi-f'(Q)\Phi=0$, integration by parts and~\eqref{equ:orthvpp},
	\begin{equation*}
		\langle \Delta\vp-\vp+f'(Q)\vp,\Phi\rangle-2\alpha\langle \vpp,\Phi\rangle=0.
	\end{equation*}
	By the expression of $G_{2}$,~\eqref{small} and~\eqref{equ:zta},
	\begin{equation*}
	\begin{aligned}
		\left|\langle G_{2}, \Phi\rangle\right|
		&\lesssim \left(|\dot{z}-\ell|+|\dot{\theta}-\beta|+|\ell|+|\beta|\right)\left(|\ell|+|\beta|+|b|\right)\lesssim \mathcal{N}^{2}.
		\end{aligned}
	\end{equation*}
	Recall that, from~\eqref{est:tay1},~\eqref{est:tay3} and~\eqref{small},
	\begin{equation*}
	\begin{aligned}
	\left|R\right|
	\lesssim |\vp|^{2}+|\vp|^{p}+|a|\left(|\Phi|+|\Phi|^{p-1}\right)|\vp|.
	\end{aligned}
	\end{equation*}
	Based on above inequality and the Sobolev embedding Theorem,
	\begin{align*}
		\left|\langle R,\Phi\rangle\right|
		&\lesssim \int_{\RR^{N}}\left(|\vp|^{2}+|\vp|^{p}+|a|\left(|\Phi|+|\Phi|^{p-1}\right)|\vp|\right)|\Phi|\d x\\
		&\lesssim \|\vec{\varphi}\|^{2}_{\E}+\|\vec{\varphi}\|^{p}_{\E}+|a|\|\vec{\varphi}\|_{\E}\lesssim \mathcal{N}^{2}+|a|\mathcal{N}.
	\end{align*}
	
	Then, from~\eqref{est:tay6} and $p>2$,
	\begin{equation*}
	\left|G-\frac{1}{2}f''(Q)V^{2}\right|\lesssim |V|^{3}+|V|^{p}\lesssim \left(|a|^{3}+|a|^{p}\right)\left(|\Phi|^{3}+|\Phi|^{p}\right).
	\end{equation*}
It follows that 
	\begin{align*}
	\left|\langle G-\frac{1}{2}f''(Q)V^{2},\Phi\rangle\right|
	&\lesssim \left(|a|^{3}+|a|^{p}\right)\int_{\RR^{N}}\left(|\Phi|^{4}+|\Phi|^{p+1}\right)\d x\lesssim |a|^{3}+|a|^{p}.
	\end{align*}
    Note that, by our normalization choice for $\phi$ \eqref{def:kappa}, and a change of variables,
    \begin{equation*}
    \langle \frac{1}{2}f''(Q)V^{2},\Phi\rangle=\frac{a^{2}}{2}E'''(q) (\phi,\phi,\phi)=-2\alpha a^{2} \| \phi \|^2.
    \end{equation*} 
Thus, from~\eqref{equ:phi},~\eqref{equ:kert}, the expression of $\Mod_{2}$, the above estimates and change of variables,
	\begin{equation*}
		\begin{aligned}
			&\langle \Mod_{2}+G,\Phi\rangle\\
			&=-(\dot{b}+2\alpha b)\langle \Phi,\Phi\rangle+\langle \frac{1}{2}f''(Q)V^{2},\Phi\rangle+
			\langle G-\frac{1}{2}f''(Q)V^{2},\Phi\rangle \\
			&=-(\dot{b}+2\alpha b+2\alpha a^{2}) \langle \phi,\phi\rangle+O\left(|a|^{3}+|a|^{p}\right).
		\end{aligned}
	\end{equation*}
	Last, from~\eqref{equ:dtQPhiY},~\eqref{small},~\eqref{equ:zta} and the Sobolev embedding Theorem,
	\begin{equation*}
		\begin{aligned}
			\left|\langle \vpp,\partial_{t}\Phi\rangle\right|
			&\lesssim \|\vec{\varphi}\|_{\E}\left(|\dot{z}-\ell|+|\dot{\theta}-\beta|+|\ell|+|\beta|\right)\lesssim \mathcal{N}^{2}\left(\mathcal{N}+|a|+1\right)\lesssim \mathcal{N}^{2}.
		\end{aligned}
	\end{equation*}
	Combining the above estimates, we have 
	\begin{equation*}
		\begin{aligned}
			&(\dot{b}+2\alpha b+2\alpha a^{2})\langle \phi,\phi\rangle=O\left(\mathcal{N}^{2}+|a|\mathcal{N}+|a|^{3}+|a|^{p}\right),
		\end{aligned}
	\end{equation*}
	which means~\eqref{est:ba2}.
	\end{proof}

Last, we derive the control of exponential directions.
\begin{lemma}\label{le:equexp} 
	Let $a_{k}^{\pm}=\langle \vec{\varphi},\vec{Z}^{\pm}_{k}\rangle$ for $k=1,\cdots,K$. Then
\begin{equation}\label{equ:exp}
\left|\frac{\d}{\d t}a_{k}^{\pm}- \nu^{\pm}_{k}a_{k}^{\pm}\right|\lesssim  \mathcal{N}^{2}+a^{2}.
\end{equation}
\end{lemma}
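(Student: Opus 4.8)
The plan is to differentiate the definition $a_k^\pm = \langle \vec\varphi, \vec Z_k^\pm\rangle$ in time, use the equation \eqref{equ:vp} for $\vec\varphi$, and recognize the linear part as exactly the flow computed in the remark preceding the lemma (which gives $\frac{\d}{\d t}a_k^\pm = \nu_k^\pm a_k^\pm$), so that the remaining terms are either quadratic in $\mathcal N$ or controlled by $a^2$. Concretely, I would write
\[
\frac{\d}{\d t}a_k^\pm = \langle \partial_t \vec\varphi, \vec Z_k^\pm\rangle + \langle \vec\varphi, \partial_t \vec Z_k^\pm\rangle,
\]
and for the first term substitute \eqref{equ:vp}. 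The term $\langle (\varphi_2, \Delta\varphi_1 - \varphi_1 - 2\alpha\varphi_2 + f'(Q)\varphi_1), \vec Z_k^\pm\rangle$ is the linearized flow: since $\vec Z_k^\pm$ is built from the eigenfunction $Y_k$ of $\q L_q$ (up to the transformation $\q T_{(z,\theta)}$), an integration by parts using $\q L_q Y_k = -\lambda_k^2 Y_k$ and the algebraic identities $\zeta_k^\pm \nu_k^\pm = \lambda_k^2$ (equivalently $\nu_k^\pm$ solves $\nu^2 + 2\alpha\nu - \lambda_k^2 = 0$ and $\zeta_k^\pm = \nu_k^\pm + 2\alpha$) produces exactly $\nu_k^\pm a_k^\pm$, modulo the discrepancy between $Q$ and $q$ (and $\Upsilon_k$ vs $Y_k$) which is $O(|\theta|)$ and hence $O(\mathcal N)$ — but multiplied against $\|\vec\varphi\|_{\E}$ it is $O(\mathcal N^2)$.

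Next I would bound the genuinely nonlinear and error contributions. The term $\langle f(Q+V+\varphi_1) - f(Q+V) - f'(Q)\varphi_1, \Upsilon_k\rangle$ — which after adding and subtracting $f'(Q+V)\varphi_1$ splits as in the $R$ estimate of Lemma~\ref{le:equzab} — is $\lesssim \|\varphi_1\|_{L^2}^2 + \|\varphi_1\|_{\E}^p + |a|\|\varphi_1\|_{\E} \lesssim \mathcal N^2 + |a|\mathcal N$, using exponential decay of $\Upsilon_k$ and Sobolev embedding; the cross term $|a|\mathcal N \le \frac12(a^2 + \mathcal N^2)$ is absorbed. The term $\langle G, \Upsilon_k\rangle$ with $G = \frac12 f''(Q)V^2 + O(|V|^3 + |V|^p)$ contributes $\frac{a^2}{2}\langle f''(q)\phi^2, Y_k\rangle + O(|a|^3 + |a|^p) = O(a^2)$, where the leading coefficient $\langle f''(q)\phi^2, Y_k\rangle$ is simply some fixed constant (no cancellation is needed here, unlike for the geometric directions, because $Y_k \notin \q Z_q$). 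The modulation terms $\langle \Mod_1, \Upsilon_k\rangle$, $\langle \Mod_2, \Upsilon_k\rangle$ and $\langle G_1 + G_2, \Upsilon_k\rangle$ are all products of a modulation-parameter defect (controlled by \eqref{equ:zta}, \eqref{equ:bl}, \eqref{est:a}, \eqref{est:ba2} as $O(\mathcal N^2 + |a|\mathcal N + |a|^{\bar p})$) with quantities $O(1)$ or $O(\mathcal N)$, hence $O(\mathcal N^2 + a^2)$ after using $|a|^{\bar p} \le |a|^2$ for $|a|$ small (since $\bar p > 2$). Finally, $\langle \vec\varphi, \partial_t \vec Z_k^\pm\rangle$: by the chain rule \eqref{equ:dtQPhiY}, $\partial_t \Upsilon_k = -\dot z\cdot\nabla\Upsilon_k + \sum \dot\theta_{ij}\Upsilon_k^{ij}$, so this term is $\lesssim \|\vec\varphi\|_{\E}(|\dot z| + |\dot\theta|) \lesssim \|\vec\varphi\|_{\E}(\mathcal N + \mathcal N^2 + |a|\mathcal N) \lesssim \mathcal N^2$ using $|\dot z| \le |\dot z - \ell| + |\ell| \lesssim \mathcal N$ and similarly for $\dot\theta$.

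The main obstacle is the bookkeeping in the linear term: one must verify that the integration-by-parts identity, after accounting for the factor $2\alpha$ in $\partial_t\varphi_2$ and for the specific two-component structure of $\vec Z_k^\pm = (\zeta_k^\pm \Upsilon_k, \Upsilon_k)$, collapses cleanly to $\nu_k^\pm a_k^\pm$. This is exactly the content of the displayed remark in the excerpt (with $\q L_q$ replaced by its frozen-in-time version), so it suffices to check the algebra $\zeta_k^\pm = \nu_k^\pm + 2\alpha$ and $\zeta_k^\pm\,\nu_k^\pm = \lambda_k^2$, both immediate from the definitions $\nu_k^\pm = -\alpha \pm \sqrt{\alpha^2 + \lambda_k^2}$, $\zeta_k^\pm = \alpha \pm \sqrt{\alpha^2 + \lambda_k^2}$. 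Once that is in place, collecting all the error bounds gives $\bigl|\frac{\d}{\d t}a_k^\pm - \nu_k^\pm a_k^\pm\bigr| \lesssim \mathcal N^2 + a^2$, which is \eqref{equ:exp}.
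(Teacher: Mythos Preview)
Your approach is correct and follows the same outline as the paper's proof. Two points deserve comment.

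First, there is no ``discrepancy between $Q$ and $q$'' in the linear part: since $\Upsilon_k = \mathcal T_{(z,\theta)} Y_k$ and $Q = \mathcal T_{(z,\theta)} q$ with $\mathcal T_{(z,\theta)}$ a rigid motion, a change of variables gives exactly $(-\Delta + 1 - f'(Q))\Upsilon_k = -\lambda_k^2 \Upsilon_k$, so the identity
\[
\langle \Delta\varphi_1 - \varphi_1 + f'(Q)\varphi_1,\ \Upsilon_k\rangle + (\zeta_k^\pm - 2\alpha)\langle\varphi_2,\Upsilon_k\rangle = \nu_k^\pm a_k^\pm
\]
holds with no error term. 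Incidentally, your claim that such a discrepancy would be $O(|\theta|) = O(\mathcal N)$ is itself wrong: $|\theta|$ is controlled only by the small parameter $\gamma$ in \eqref{small}, not by $\mathcal N$ (which does not contain $\theta$). This slip is harmless here only because the discrepancy is in fact zero.

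Second, the paper handles the modulation contributions more cleanly than you do. Since $\Mod_1$ and $\Mod_2$ are linear combinations of $\partial_{x_n} Q$, $\Psi_{ij}$, $\Phi$, all of which lie in the transformed kernel of $\mathcal L_q$ and are therefore $L^2$-orthogonal to the eigenfunction $\Upsilon_k$, one has $\langle \zeta_k^\pm \Mod_1 + \Mod_2,\ \Upsilon_k\rangle = 0$ identically. Your alternative of bounding these via \eqref{equ:zta}--\eqref{est:ba2} also reaches $O(\mathcal N^2 + a^2)$, but note that the coefficient $\dot b + 2\alpha b$ appearing in $\Mod_2$ carries an additional $-2\alpha a^2$ on top of the $O(\mathcal N^2 + |a|\mathcal N + |a|^{\bar p})$ remainder from \eqref{est:ba2}; you should include that $a^2$ explicitly when stating the size of the modulation defects, rather than only recovering it at the end.
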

\begin{proof}
	 By~\eqref{equ:vp},
	\begin{equation*}
	\begin{aligned}
	\frac{\d}{\d t}a_{k}^{\pm}=&\langle \partial_{t}\vec{\varphi},\vec{Z}_{k}^{\pm}\rangle
	+\langle \vec{\varphi},\partial_{t}\vec{Z}_{k}^{\pm}\rangle\\
	=&\langle\Delta \vp-\vp+f'(Q)\vp,\Upsilon_{k}\rangle+\left(\zeta_{k}^{\pm}-2\alpha\right)\langle \vpp,\Upsilon_{k}\rangle+ \langle R+G,\Upsilon_{k}\rangle\\
	&+\langle \zeta_{k}^{\pm}\Mod_{1}+\Mod_{2},\Upsilon_{k}\rangle+\langle \zeta_{k}^{\pm}G_{1}+G_{2},\Upsilon_{k}\rangle+\langle \vec{\varphi},\partial_{t}\vec{Z}_{k}^{\pm}\rangle.
	\end{aligned}
	\end{equation*}
	From $\lambda_{k}^{2}=\nu_{k}^{\pm}\zeta_{k}^{\pm}$, $\mathcal{L}_{q}Y_{k}=-\lambda_{k}^{2}Y_{k}$ and integration by parts,
	\begin{equation*}
	\langle\Delta \vp-\vp+f'(Q)\vp,\Upsilon_{k}\rangle
	=\langle\vp,\left(\Delta-1+f'(Q)\right)\Upsilon_{k}\rangle
	=\nu_{k}^{\pm}\zeta_{k}^{\pm}\langle \vp, \Upsilon_{k}\rangle.
	\end{equation*}
	Combining above identity with $\nu_{k}^{\pm}=\zeta_{k}^{\pm}-2\alpha$ and the definition of $a_{k}^{\pm}$, 
	\begin{equation*}
	\begin{aligned}
	&\langle\Delta \vp-\vp+f'(Q)\vp,\Upsilon_{k}\rangle+\left(\zeta_{k}^{\pm}-2\alpha\right)\langle \vpp,\Upsilon_{k}\rangle\\
	&=\nu_{k}^{\pm}\left(\zeta_{k}^{\pm}\langle \vp, \Upsilon_{k}\rangle+\langle \vpp,\Upsilon_{k}\rangle\right)=\nu_{k}^{\pm}a_{k}^{\pm}.
	\end{aligned}
	\end{equation*}
	Recall that, from~\eqref{est:tay1},~\eqref{est:tay2} and~\eqref{est:tay3},
	\begin{equation*}
	|R|+|G|\lesssim |\vp|^{2}+|\vp|^{p}+|V|^{2}+|V|^{p}.
	\end{equation*}
	Therefore, by the Sobolev embedding Theorem, and as $\Upsilon_k$ is exponentially localized, we have 
	\begin{equation*}
	\left|\langle R+G,\Upsilon_{k} \rangle\right|\lesssim \int_{\RR^{N}}\left(\vp^{2}+|\vp|^{p}+|V|^{2}+|V|^{p}\right)\left|\Upsilon_{k}\right|\d x \lesssim \mathcal{N}^{2}+a^{2}.
	\end{equation*}
	Note that, from $\langle \psi,Y_{k}\rangle=0$ for any $\psi\in \ker\mathcal{L}_{q}$ and $k=1,\cdots,K$, 
	\begin{equation*}
	\langle \zeta_{k}^{\pm}\Mod_{1}+\Mod_{2},\Upsilon_{k}\rangle=0.
	\end{equation*}
	Next, by the expression of~$G_{1}$,~$G_{2}$,~\eqref{small},~\eqref{equ:zta} and~\eqref{equ:bl},
	\begin{equation*}
	\begin{aligned}
	\left|\langle \zeta_{k}^{\pm}G_{1}+G_{2},\Upsilon_{k}\rangle\right|
	&\lesssim \big(|\dot{z}-\ell|+|\dot{\theta}-\beta|+|\ell|+|\beta|\big)\left(|\ell|+|\beta|+|a|+|b|\right)\\
	&\lesssim \mathcal{N}
	\left(\mathcal{N}+|a|\right)\left(\mathcal{N}+|a|+1\right)\lesssim \mathcal{N}^{2}+a^{2}.
	\end{aligned}
	\end{equation*}
	Last, from~\eqref{equ:dtQPhiY}~\eqref{small},~\eqref{equ:zta} and the Sobolev embedding Theorem,
	\begin{equation*}
	\begin{aligned}
	\big|\langle \vec{\varphi},\partial_{t}\vec{Z}_{k}^{\pm}\rangle\big|
	&\lesssim \int_{\RR^{N}}\left(|\vp|+|\vpp|\right)|\partial_{t} \Upsilon_{k}|\d x\\
	&\lesssim  \|\vec{\varphi}\|_{\E}\left(|\dot{z}-\ell|+|\dot{\theta}-\beta|+|\ell|+|\beta|\right)\lesssim \mathcal{N}^{2}\left(\mathcal{N}+|a|+1\right)\lesssim \mathcal{N}^{2}.
	\end{aligned}
	\end{equation*}
	Gathering these estimates, and proceeding similarly for all $k=1,\cdots,K$, 
	we obtain~\eqref{equ:exp}.
\end{proof}

\subsection{Energy estimates} 

Let $q$ be a non-degenerate or degree-1 excited state. For $\mu>0$ small to be chosen later, we denote $\rho=2\alpha-\mu$, and consider the nonlinear energy functional
\begin{equation}\label{def:E}
\begin{aligned}
\mathcal{E}=&\int_{\RR^{N}}\left\{|\nabla \varphi_{1}|^{2}+(1-\rho\mu)\varphi_{1}^{2}+(\varphi_{2}+\mu \varphi_{1})^{2}\right\}\d x\\
&-2\int_{\RR^{N}}\left\{F(Q+V+\varphi_{1})-F(Q+V)-f(Q+V)\varphi_{1}\right\}\d x.
\end{aligned}
\end{equation}
Analogous energy functionals were introduced in \cite{CMYZ, CMY}, in order to take full advantage of the damping of \eqref{equDKG}, as is shown in the Lemma below.
Recall that in the previous paragraph, in \eqref{def:N}, we set
\begin{equation*}
\mathcal{N}=\|\vec{\varphi}\|_{\E}+|\ell|+|\beta|+|b|.
\end{equation*}

\begin{lemma}\label{le:ener1}
	In the context of Proposition~\ref{pro:dec}, there exist $0<\gamma_{0}\ll 1$ and $0<\mu<\min_{k,\pm}(1,\alpha,\nu_{k}^{\pm})$ such that the following hold, for all $t \in [T_1,T_2]$.
	\begin{enumerate}
		\item {\rm{Coercivity and bound.}}
		\begin{equation}\label{coer:E}
		\mu\|\vec{\varphi}\|_{\E}^{2}
		-\mu^{-1}\sum_{k=1}^{K}\left((a_{k}^{+})^{2}+(a_{k}^{-})^{2}\right)\le \mathcal{E}\le \mu^{-1}\|\vec{\varphi}\|_{\E}^{2}.
		\end{equation}
\item {\rm{Time variation}.}
\begin{equation}\label{dtE}
\frac{\d}{\d t}\mathcal{E}+2\mu \mathcal{E}\le \mu^{-1}\left(\mathcal{N}^{3}+a^{2}\mathcal{N}\right).
\end{equation}
\end{enumerate}
\end{lemma}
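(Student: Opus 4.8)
The plan is to establish the two items of Lemma~\ref{le:ener1} separately, the coercivity bound being essentially algebraic and the time-variation estimate requiring the equation of $\vec\varphi$ and the modulation estimates from Lemmas~\ref{le:equzab} and~\ref{le:equexp}.

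\textbf{Step 1: Coercivity and bound \eqref{coer:E}.} I would first rewrite $\mathcal E$ in a more transparent form. Expanding the nonlinear term by Taylor (using \eqref{est:tay5} with $s = \varphi_1$), we have $2\int \{F(Q+V+\varphi_1) - F(Q+V) - f(Q+V)\varphi_1\} = \int f'(Q+V)\varphi_1^2 + O(\|\varphi_1\|_{H^1}^3 + \|\varphi_1\|_{H^1}^{p+1})$, and then replace $f'(Q+V)$ by $f'(Q)$ at the cost of $O(|a|\,\|\varphi_1\|_{H^1}^2)$ using \eqref{est:tay1}. Thus, up to cubic-in-$\mathcal N$ errors,
\[
\mathcal E = \langle \q L_q \varphi_1, \varphi_1\rangle + \|\varphi_2 + \mu\varphi_1\|_{L^2}^2 + \mu(\mu - \rho)\|\varphi_1\|_{L^2}^2 + (\text{lower order}),
\]
where I have used $\rho = 2\alpha - \mu$ so the $\varphi_1^2$ coefficient combines with the $1$ inside $\q L_q$. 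For the upper bound $\mathcal E \le \mu^{-1}\|\vec\varphi\|_{\E}^2$, crude estimates $\langle \q L_q\varphi_1,\varphi_1\rangle \lesssim \|\varphi_1\|_{H^1}^2$ and $\|\varphi_2 + \mu\varphi_1\|_{L^2}^2 \lesssim \|\vec\varphi\|_{\E}^2$ suffice after taking $\gamma_0$ small and $\mu$ small. For the lower bound, I would apply the coercivity \eqref{coer} of Lemma~\ref{le:spec}(ii): the orthogonality conditions \eqref{equ:orthvp} say $\varphi_1 \perp \q Z_q$ and $\varphi_1 \perp \phi$, hence $\varphi_1 \perp \ker \q L_q$ (since $\ker \q L_q = \q Z_q \oplus \Span\{\phi\}$ by \eqref{equ:phi}), so $\Pi_q \varphi_1 = 0$ modulo the small $O(|\theta|)$ discrepancy between $\Psi_{ij}, \Phi$ and their $\theta = 0$ counterparts from \eqref{est:kernel}. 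Therefore $\langle \q L_q\varphi_1,\varphi_1\rangle \ge c\|\varphi_1\|_{H^1}^2 - c^{-1}\sum_k \langle\varphi_1, Y_k\rangle^2 - (\text{small})$. Then I would express $\sum_k\langle\varphi_1,Y_k\rangle^2$ in terms of $(a_k^\pm)^2$: since $a_k^\pm = \zeta_k^\pm\langle\varphi_1,\Upsilon_k\rangle + \langle\varphi_2,\Upsilon_k\rangle$ and $\zeta_k^+ \ne \zeta_k^-$, one solves $\langle\varphi_1,\Upsilon_k\rangle = (a_k^+ - a_k^-)/(\zeta_k^+ - \zeta_k^-)$, and $\Upsilon_k \approx Y_k(R_\theta(\cdot - z))$. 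Combining with $\|\varphi_2 + \mu\varphi_1\|_{L^2}^2 \ge 0$ and absorbing the small errors into $\|\varphi_1\|_{H^1}^2$ for $\gamma_0$ small yields \eqref{coer:E} after a further shrinking of $\mu$.

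\textbf{Step 2: Time variation \eqref{dtE}.} I differentiate $\mathcal E$ in time. Writing $\mathcal E = \mathcal E_{\mathrm{quad}} + \mathcal E_{\mathrm{nl}}$, the computation of $\frac{\d}{\d t}\mathcal E_{\mathrm{quad}}$ uses the equations \eqref{equ:vp}: the main linear contribution, after integrating by parts and using $-\Delta Q + Q = f(Q)$ etc., produces a term $-2\rho\|\varphi_2 + \mu\varphi_1\|_{L^2}^2$ coming from the damping $-2\alpha\varphi_2$ (this is the whole point of the shifted functional — it is designed so that the damping term gives a manifestly negative, coercive contribution). The derivative of $\mathcal E_{\mathrm{nl}}$ contributes $-2\int \partial_t(Q+V)\,[f(Q+V+\varphi_1) - f(Q+V) - f'(Q+V)\varphi_1]\,\d x$ plus a piece combining with the $f(Q+V+\varphi_1) - f(Q+V)$ term in the $\partial_t\varphi_2$ equation; using \eqref{est:tay3} and $|\partial_t(Q+V)| \lesssim |\dot z| + |\dot\theta| + |\dot a| + |a| \lesssim \mathcal N + |a|$ (from \eqref{equ:zta}, \eqref{est:a}, \eqref{small}), this is $O((\mathcal N + |a|)(\|\varphi_1\|_{H^1}^2 + \|\varphi_1\|_{H^1}^p)) = O(\mathcal N^3 + a^2\mathcal N)$. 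The modulation terms $\Mod_1, \Mod_2$ enter paired against $\varphi_1, \varphi_2$; by the orthogonality conditions \eqref{equ:orthvp}--\eqref{equ:orthvpp} the leading pieces vanish, and the residual is controlled by $(|\dot z - \ell| + \cdots)\|\vec\varphi\|_{\E} \lesssim (\mathcal N^2 + |a|\mathcal N)\mathcal N \lesssim \mathcal N^3 + a^2\mathcal N$ via Lemma~\ref{le:equzab}. The terms $G_1, G_2$ are quadratic in parameters and give $O(\mathcal N^3)$; the term $G = f(Q+V) - f(Q) - f'(Q)V$ satisfies $|G| \lesssim |a|^2(|\Phi|^2 + |\Phi|^p)$ by \eqref{est:tay2}, and when integrated against $\varphi_2$ (or $\varphi_2 + \mu\varphi_1$) gives $O(a^2\mathcal N)$. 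Collecting everything, $\frac{\d}{\d t}\mathcal E \le -2\mu\mathcal E - (2\rho - 2\mu)\|\varphi_2 + \mu\varphi_1\|_{L^2}^2 + \text{(good negative quadratic terms)} + \mu^{-1}(\mathcal N^3 + a^2\mathcal N)$, and choosing $\mu < \alpha$ (so $\rho = 2\alpha - \mu > \mu$) makes the extra quadratic terms have a favorable sign; one must also absorb $2\mu\mathcal E \le 2\|\vec\varphi\|_{\E}^2$ against the $\|\varphi_1\|_{H^1}^2$ and $\|\varphi_2+\mu\varphi_1\|_{L^2}^2$ terms produced by the linear part, which is where the precise value of $\rho$ and smallness of $\mu$ are used.

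\textbf{Main obstacle.} The delicate point is Step 2: showing that after differentiation, the linear part of $\frac{\d}{\d t}\mathcal E$ is bounded above by $-2\mu\mathcal E$ up to the desired remainder. This requires carefully tracking how the coefficient $1 - \rho\mu$ on $\varphi_1^2$, the cross term $\mu\varphi_1$ inside the square, and the damping $2\alpha$ conspire: the algebra only closes because $\rho = 2\alpha - \mu$ is chosen exactly so that the ``bad'' terms $\int \varphi_1\varphi_2$, $\int|\nabla\varphi_1|^2$, $\int\varphi_1^2$ arising from differentiating each piece reassemble, after using the equations, into $-2\mu$ times the same quadratic form plus a negative multiple of $\|\varphi_2 + \mu\varphi_1\|_{L^2}^2$. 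I would organize this by computing $\frac{\d}{\d t}$ of each of the three quadratic integrands and the nonlinear integrand separately, substituting \eqref{equ:vp}, and then grouping; the nonlinearity-vs-nonlinearity cancellation (between $-2\frac{\d}{\d t}\int\{\cdots\}$ and the $f(Q+V+\varphi_1) - f(Q+V)$ source term in $\partial_t\varphi_2$) is the second place where care is needed, but it is structurally identical to the arguments in \cite{CMYZ, CMY} and I would cite those for the template while supplying the modifications forced by the extra direction $V = a\Phi$.
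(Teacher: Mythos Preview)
Your proposal is correct and follows essentially the same approach as the paper. Both proofs establish the coercivity by Taylor-expanding the nonlinear part, invoking the spectral lower bound \eqref{coer} together with the orthogonality \eqref{equ:orthvp}, and controlling $\langle\varphi_1,\Upsilon_k\rangle^2$ via $(a_k^\pm)^2$; and both obtain the time-variation estimate by differentiating, substituting \eqref{equ:vp}, and regrouping into $-2\mu\mathcal E - 2(\rho-\mu)\|\varphi_2+\mu\varphi_1\|_{L^2}^2$ plus errors of size $O(\mathcal N^3 + a^2\mathcal N)$ coming from $G,G_1,G_2,\Mod_1,\Mod_2$ and $\partial_t(Q+V)$.

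Two minor remarks. First, your ``absorption'' comment at the end of Step~2 is unnecessary: the algebra is exact, and once you extract $-2\mu\mathcal E$ (as the paper does in its $\mathcal I_3$ computation) the remaining quadratic term $-2(\rho-\mu)\|\varphi_2+\mu\varphi_1\|_{L^2}^2$ is already nonpositive for $\mu<\alpha$, so nothing needs to be absorbed. Second, your bound $|\partial_t(Q+V)|\lesssim \mathcal N+|a|$ is slightly loose (in fact $|\dot a|\lesssim|b|+\mathcal N^2+|a|\mathcal N\lesssim\mathcal N$ and $a\,\partial_t\Phi=O(|a|\mathcal N)$, so $|\partial_t(Q+V)|\lesssim\mathcal N$ and the paper gets $|\mathcal I_2|\lesssim\mathcal N^3$); this does not affect the final estimate.
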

\begin{proof}
	Proof of (i). First, from~\eqref{est:tay4} and the Sobolev embedding Theorem,
	\begin{equation*}
	\begin{aligned}
	&\int_{\RR^{N}}\left|F(Q+V+\vp)-F(Q+V)-f(Q+V)\vp\right|\d x\\
	&\lesssim\int_{\RR^{N}}(|Q+V|^{p-1}|\vp|^{2}+|\vp|^{p+1})\d x\lesssim\|\vec{\varphi}\|_{\E}^{2}+\|\vec{\varphi}\|_{\E}^{p+1}.
	\end{aligned}
	\end{equation*}
	It follows that the right-hand side of~\eqref{coer:E}. 
	
	Second, from the definition of $\mathcal{E}$ in~\eqref{def:E}, we decompose
	\begin{equation*}
	\mathcal{E}
	=\langle -\Delta\vp+\vp-f'(Q)\vp,\vp\rangle-\rho\mu \langle \vp,\vp\rangle
	+\mathcal{E}_{1}+\mathcal{E}_{2}+\mathcal{E}_{3},
	\end{equation*}
where
\begin{equation*}
\begin{aligned}
\mathcal{E}_{1}&=\int_{\RR^{N}}(\vpp+\mu\vp)^{2}\d x,\quad \mathcal{E}_{2}=-\int_{\RR^{N}}\left(f'(Q+V)-f'(Q)\right)\vp^{2}\d x,\\
\mathcal{E}_{3}&=-2\int_{\RR^{N}}\big\{F(Q+V+\vp)-F(Q+V)-f(Q+V)\vp-\frac{1}{2}f'(Q+V)\vp^{2}\big\}\d x.
\end{aligned}
\end{equation*}
	By~\eqref{coer} and~\eqref{equ:orthvp},
	\begin{equation*}
	\langle -\Delta\vp+\vp-f'(Q)\vp,\vp\rangle \ge 
	c\|\vp\|_{H^{1}}^{2}-c^{-1}\sum_{k=1}^{K}\left((a_{k}^{+})^{2}+(a^{-}_{k})^{2}\right).
	\end{equation*}
	From the AM-GM inequality, we have 
	\begin{equation*}
	\mathcal{E}_{1}=\int_{\RR^{N}}\left(\vpp+\mu \vp\right)^{2}\d x\ge \frac{1}{2}\int_{\RR^{N}}\vpp^{2}\d x-\mu^{2}\int_{\RR^{N}}\vp^{2}\d x.
	\end{equation*}
    Then, using~\eqref{est:tay1},~\eqref{est:tay5},~\eqref{small} and the Sobolev embedding Theorem, we have 
    \begin{equation*}
    \begin{aligned}
    \left|\mathcal{E}_{2}\right|&\lesssim
    \int_{\RR^{N}}\left(|V|+|V|^{p-1}\right)|\vp|^{2}\d x\lesssim 
    \left(\gamma_{0}+\gamma_{0}^{p-1}\right)\|\vec{\varphi}\|_{\E}^{2},\\
\left|\mathcal{E}_{3}\right|&\lesssim \int_{\RR^{N}}\left(|\vp|^{3}+|\vp|^{p+1}\right)\d x\lesssim \|\vec{\varphi}\|_{\E}^{3}+\|\vec{\varphi}\|_{\E}^{p+1}.
\end{aligned}
\end{equation*}
Combining the above estimates, we have
\begin{equation*}
\begin{aligned}
\mathcal{E}\ge& \left(c-\rho\mu-\mu^{2}\right)\|\vp\|_{H^{1}}^{2}+\frac{1}{2}\|\vpp\|_{L^{2}}^{2}-
c^{-1}\sum_{k=1}^{K}\left( (a_{k}^{+})^{2}+(a^{-}_{k})^{2}\right)\\
&+O\left(\left(\gamma_{0}+\gamma_{0}^{p-1}\right)\|\vec{\varphi}\|_{\E}^{2}+\|\vec{\varphi}\|_{\E}^{3}\right),
\end{aligned}
\end{equation*} 
which implies the left-hand side of~\eqref{coer:E} for $\gamma_{0}>0$ small enough and $\mu>0$ small enough.

	Proof of (ii). By direct computation and integration by parts, we decompose,
	\begin{equation*}
	\begin{aligned}
	\frac{\d}{\d t}\mathcal{E}=
	&2\int_{\RR^{N}}\partial_{t}\vp\big\{-\Delta \vp+(1-\rho\mu)\vp-\big[f(Q+V+\vp)-f(Q+V)\big]\big\}\d x\\
	&-2\int_{\RR^{N}}\left\{\left(f(Q+V+\vp)-f(Q+V)-f'(Q+V)\vp\right)\left(\partial_{t} Q+\partial_{t} V\right)\right\}\d x\\
	&+2\int_{\RR^{N}}\left\{(\vpp+\mu\vp)(\partial_{t}\vpp+\mu\partial_{t}\vp)\right\}\d x=\mathcal{I}_{1}+\mathcal{I}_{2}+\mathcal{I}_{3}.
	\end{aligned}
	\end{equation*}

	\emph{Estimate on ${\mathcal{I}}_{1}$}.
	We claim
	\begin{equation}\label{est:I1}
	\begin{aligned}
	\mathcal{I}_{1}
	=&-2\int_{\RR^{N}}\left\{\Delta \vp-\vp+\left[f(Q+V+\vp)-f(Q+V)\right]\right\}\vpp\d x\\
	&-2\rho \mu\int_{\RR^{N}}\vp \vpp \d x+O\left(\mathcal{N}^{3}+|a|\mathcal{N}^{2}\right).
	\end{aligned}
	\end{equation}
	By~\eqref{equ:orthvp},~\eqref{equ:vp}, $-\Delta\Mod_{1}+\Mod_{1}-f'(Q)\Mod_{1}=0$ and integration by parts,
	\begin{equation*}
	\begin{aligned}
	\mathcal{I}_{1}
	=&2\int_{\RR^{N}}\left\{-\Delta \vp+(1-\rho\mu)\vp-\left[f(Q+V+\vp)-f(Q+V)\right]\right\}\vpp\d x\\
	&+\mathcal{I}_{1,1}+\mathcal{I}_{1,2}+\mathcal{I}_{1,3}+\mathcal{I}_{1,4},
	\end{aligned}
	\end{equation*}
where
\begin{equation*}
\begin{aligned}
\mathcal{I}_{1,1}&=-2\int_{\RR^{N}}\left[\Delta \vp-(1-\rho\mu)\vp\right] G_{1}\d x,\\
\mathcal{I}_{1,2}&=-2\int_{\RR^{N}}\left[f(Q+V+\vp)-f(Q+V)\right]G_{1}\d x,\\
\mathcal{I}_{1,3}&=-2\int_{\RR^{N}}\left[\left(f'(Q+V)-f'(Q)\right)\vp\right]\Mod_{1}\d x,\\
\mathcal{I}_{1,4}&=-2\int_{\RR^{N}}\left[f(Q+V+\vp)-f(Q+V)-f'(Q+V)\vp\right]\Mod_{1}\d x.
\end{aligned}
\end{equation*}
	Using~\eqref{est:tay3},~\eqref{small},~\eqref{equ:zta} and integration by parts, we have 
	\begin{equation*}
	\begin{aligned}
	\left|\mathcal{I}_{1,1}\right|
	&\lesssim  \int_{\RR^{N}}\left(\left|\Delta G_{1}\right|+|G_{1}|\right)|\vp|\d x\\
	&\lesssim   |a|\mathcal{N}\left(|\dot{z}-\ell|+|\dot{\theta}-\beta|+\mathcal{N}\right)\lesssim |a|\mathcal{N}^{2}\left(\mathcal{N}+|a|+1\right)\lesssim |a|\mathcal{N}^{2},\\
	\left|\mathcal{I}_{1,2}\right|
	&\lesssim \int_{\RR^{N}}|G_{1}|(|\vp|+|\vp|^{p})\d x\\
	&\lesssim   |a|\mathcal{N}\left(|\dot{z}-\ell|+|\dot{\theta}-\beta|+\mathcal{N}\right)\lesssim |a|\mathcal{N}^{2}\left(\mathcal{N}+|a|+1\right)\lesssim |a|\mathcal{N}^{2}.
	\end{aligned}
	\end{equation*}
	Then, using~\eqref{est:tay1},~\eqref{est:tay3},~\eqref{small} and~\eqref{equ:zta}, we have
	\begin{equation*}
	\begin{aligned}
	\left|\mathcal{I}_{1,3}\right|
	&\lesssim \int_{\RR^{N}}\left(|V|+|V|^{p-1}\right)|\vp||\Mod_{1}|\d x\\
	&\lesssim \|\vec{\varphi}\|_{\E}\left(|a|+|a|^{p-1}\right)\left(|\dot{z}-\ell|+|\dot{\theta}-\beta|+|\dot{a}-b|\right)\\
	&\lesssim \mathcal{N}^{2}\left(|a|+|a|^{p-1}\right)\left(\mathcal{N}+|a|\right)
	\lesssim \mathcal{N}^{3}+|a|\mathcal{N}^{2},
	\end{aligned}
	\end{equation*}
and
\begin{equation*}
\begin{aligned}
\left|\mathcal{I}_{1,4}\right|
&\lesssim \int_{\RR^{N}}\left(\vp^{2}+|\vp|^{p}\right)|\Mod_{1}|\d x\\
&\lesssim \left(\mathcal{N}^{2}+\mathcal{N}^{p}\right)\left(|\dot{z}-\ell|+|\dot{\theta}-\beta|+|\dot{a}-b|\right)\lesssim \mathcal{N}\left(\mathcal{N}^{2}+\mathcal{N}^{p}\right)\left(\mathcal{N}+|a|\right)
\lesssim \mathcal{N}^{3}.
\end{aligned}
\end{equation*}
	Gathering the above estimates, we obtain~\eqref{est:I1}.
	
\emph{Estimate on ${\mathcal{I}}_{2}$.} We claim
\begin{equation}\label{est:I2}
\left|\mathcal{I}_{2}\right|\lesssim \mathcal{N}^{3}.
\end{equation}
By~\eqref{equ:dtQPhiY},
\begin{equation*}
\partial_{t}Q+\partial_{t}V=-\ell\cdot\nabla Q+\sum_{(i,j)\in I_{q}}\beta_{ij}\Psi_{ij}+b\Phi-{\rm{Mod}_{1}}-G_{1}.
\end{equation*}
Based on above identity, we decompose
\begin{equation*}
\mathcal{I}_{2}=\mathcal{I}_{2,1}+\mathcal{I}_{2,2}+\mathcal{I}_{2,3}+\mathcal{I}_{2,4}+\mathcal{I}_{2,5},
\end{equation*}
where
\begin{equation*}
\begin{aligned}
\mathcal{I}_{2,1}&=2\int_{\RR^{N}}\left(f(Q+V+\vp)-f(Q+V)-f'(Q+V)\vp\right)\left(-b\Phi\right) \d x,\\
\mathcal{I}_{2,2}&=2\int_{\RR^{N}}\left(f(Q+V+\vp)-f(Q+V)-f'(Q+V)\vp\right)G_{1}\d x,
\end{aligned}
\end{equation*}
and
\begin{equation*}
\begin{aligned}
\mathcal{I}_{2,3}&=2\int_{\RR^{N}}\left(f(Q+V+\vp)-f(Q+V)-f'(Q+V)\vp\right)\Mod_{1}\d x,\\
\mathcal{I}_{2,4}&=2\int_{\RR^{N}}\left(f(Q+V+\vp)-f(Q+V)-f'(Q+V)\vp\right)\big(\ell\cdot\nabla Q\big)\d x,\\
\mathcal{I}_{2,5}&=-2\sum_{(i,j)\in I_{q}}\beta_{ij}\int_{\RR^{N}}\left(f(Q+V+\vp)-f(Q+V)-f'(Q+V)\vp\right)
\Psi_{ij}\d x.
\end{aligned}
\end{equation*}
From~\eqref{est:tay3} and the Sobolev embedding Theorem,
\begin{equation*}
\begin{aligned}
\left|\mathcal{I}_{2,1}\right|\lesssim |b|\int_{\RR^{N}}\vp^{2}|\Phi|\d x\lesssim \mathcal{N}^{3}.
\end{aligned}
\end{equation*}
Using~\eqref{est:tay3},~\eqref{small},~\eqref{equ:zta},~\eqref{est:a} and the Sobolev embedding Theorem,
we have 
\begin{equation*}
\begin{aligned}
\left|\mathcal{I}_{2,2}\right|
&\lesssim \int_{\RR^{N}}\left(\vp^{2}+|\vp|^{p}\right)|G_{1}|\d x\\
&\lesssim \left(\mathcal{N}^{2}+\mathcal{N}^{p}\right)\left(|a|\mathcal{N}+|a||\dot{z}-\ell|+|a||\dot{\theta}-\beta|\right)\\
&\lesssim \left(\mathcal{N}^{2}+\mathcal{N}^{p}\right)\left(|a|\mathcal{N}+|a|\mathcal{N}(|a|+\mathcal{N})\right)\lesssim \mathcal{N}^{3},
\end{aligned}
\end{equation*}
\begin{equation*}
\begin{aligned}
\left|\mathcal{I}_{2,3}\right|
&\lesssim \int_{\RR^{N}}\left(\vp^{2}+|\vp|^{p}\right)|\Mod_{1}|\d x\\
&\lesssim \left(\mathcal{N}^{2}+\mathcal{N}^{p}\right)\left(\dot{z}-\ell|+|\dot{\theta}-\beta|+|\dot{a}-b|\right)\\
&\lesssim \left(\mathcal{N}^{2}+\mathcal{N}^{p}\right)\left(|a|+\mathcal{N}\right)\mathcal{N}\lesssim\mathcal{N}^{3},
\end{aligned}
\end{equation*}
\begin{equation*}
\left|\mathcal{I}_{2,4}\right|
\lesssim \int_{\RR^{N}}\left(\vp^{2}+|\vp|^{p}\right)|\ell\cdot \nabla Q|\d x
\lesssim\mathcal{N}^{3},
\end{equation*}
and
\begin{equation*}
\begin{aligned}
\left|\mathcal{I}_{2,5}\right|\lesssim \sum_{(i,j)\in I_{q}}\int_{\RR^{N}}\left(\vp^{2}+|\vp|^{p}\right)\left|\beta_{ij}\Psi_{ij}\right|\d x\lesssim \mathcal{N}^{3}.
\end{aligned}
\end{equation*}
Combining above estimates with~\eqref{small}, we obtain~\eqref{est:I2}.

\emph{Estimate on $\mathcal{I}_{3}$.} We claim
\begin{equation}\label{est:I3}
\begin{aligned}
\mathcal{I}_{3}
=&-2\mu \mathcal{E}+2\int_{\RR^{N}}\left\{\Delta \vp-\vp+\left[f(Q+V+\vp)-f(Q+V)\right]\right\}\vpp\d x\\
&+2\rho\mu \int_{\RR^{N}}\vp \vpp \d x-2(\rho-\mu)\int_{\RR^{N}}(\vpp+\mu \vp)^{2}\d x+O\left(\mathcal{N}^{3}+a^{2}\mathcal{N}\right).
\end{aligned}
\end{equation}
By direct computation and~\eqref{equ:vp}, we decompose,
\begin{equation*}
\begin{aligned}
\mathcal{I}_{3}
=&-2\mu \mathcal{E}+2\int_{\RR^{N}}\left\{\Delta \vp-\vp+\left[f(Q+V+\vp)-f(Q+V)\right]\right\}\vpp\d x\\
&+2\rho\mu \int_{\RR^{N}}\vp \vpp \d x-2(\rho-\mu)\int_{\RR^{N}}(\vpp+\mu \vp)^{2}\d x\\
&+\mathcal{I}_{3,1}+\mathcal{I}_{3,2}
+\mathcal{I}_{3,3}+\mathcal{I}_{3,4}+\mathcal{I}_{3,5},
\end{aligned}
\end{equation*}
where
\begin{equation*}
\begin{aligned}
\mathcal{I}_{3,1}&=2\int_{\RR^{N}}\left(\vpp+\mu\vp\right)G\d x,\\
\mathcal{I}_{3,2}&=2\int_{\RR^{N}}\left(\vpp+\mu\vp\right)\left(G_{2}+\mu G_{1}\right)\d x,\\
\mathcal{I}_{3,3}&=2\int_{\RR^{N}}\left(\vpp+\mu\vp\right)\left(\Mod_{2}+\mu\Mod_{1}\right)\d x,
\end{aligned}
\end{equation*}
and
\begin{equation*}
\begin{aligned}
\mathcal{I}_{3,4}&=2\mu \int_{\RR^{N}}\left[f(Q+V+\vp)-f(Q+V)-f'(Q+V)\vp\right]\vp \d x,\\
\mathcal{I}_{3,5}&=-4\mu \int_{\RR^{N}}\big[F(Q+V+\vp)-F(Q+V)-f(Q+V)\vp-\frac{1}{2}f'(Q+V)\vp^{2}\big]\d x.
\end{aligned}
\end{equation*}
Using~\eqref{est:tay2},
\begin{equation*}
\left|\mathcal{I}_{3,1}\right|
\lesssim \int_{\RR^{N}}\left(|\vpp|+|\vp|\right)V^{2}\d x
\lesssim a^{2}\mathcal{N}.
\end{equation*}
Next, from~\eqref{small},~\eqref{equ:zta} and the AM-GM inequality,
\begin{equation*}
\begin{aligned}
\left|\mathcal{I}_{3,2}\right|
&\lesssim \int_{\RR^{N}}\left(|\vp|+|\vpp|\right)\left(|G_{1}|+|G_{2}|\right)\d x\\
&\lesssim \mathcal{N}\left(|a|+|b|+|\ell|+|\beta|\right)\left(|\dot{z}-\ell|+|\dot{\theta}-\beta|+|\ell|+|\beta|\right)\\
&\lesssim \mathcal{N}^{2}\left(|a|+\mathcal{N}\right)\left(\mathcal{N}+|a|+1\right)\lesssim a^{2}\mathcal{N}+\mathcal{N}^{3}.
\end{aligned}
\end{equation*}
Note that, from~\eqref{equ:orthvp} and~\eqref{equ:orthvpp}, we have
\begin{equation*}
\mathcal{I}_{3,3}=2\langle \vpp, \Mod_{2}+\mu\Mod_{1}\rangle+2\mu\langle \vp, \Mod_{2}+\mu\Mod_{1}\rangle=0.
\end{equation*}
Last, from~\eqref{est:tay3},~\eqref{est:tay5} and the Sobolev embedding Theorem,
\begin{equation*}
\begin{aligned}
\left|\mathcal{I}_{3,4}\right|
&\lesssim \int_{\RR^{N}}\left(|\vp|^{2}+|\vp|^{p}\right)|\vp|\d x
\lesssim \mathcal{N}^{3}+\mathcal{N}^{p+1},\\
\left|\mathcal{I}_{3,5}\right|
&\lesssim \int_{\RR^{N}}\left(|\vp|^{3}+|\vp|^{p+1}\right)\d x
\lesssim \mathcal{N}^{3}+\mathcal{N}^{p+1}.
\end{aligned}
\end{equation*}
Gathering the above estimates, we obtain~\eqref{est:I3}. 

Last, combining~\eqref{est:I1},~\eqref{est:I2} and~\eqref{est:I3}, we conclude~\eqref{dtE}.
	\end{proof}
We observe from the definition and the energy property~\eqref{energy} that a packed solution $\vec{u}$ with degree-1 excited state cluster point $q$ satisfies
\begin{equation}\label{lim:energy}
\lim_{t\to \infty}E(\vec{u}(t))=\frac{1}{2}\int_{\RR^{N}}\left(|\nabla q|^{2}+q^{2}-2F(q)\right)\d x=E(q,0).
\end{equation}
More precisely, we expand the energy for a solution close to degree-1 excited state.
	\begin{lemma}\label{le:expanE}
		In the context of Proposition~\ref{pro:dec}, for the case $q$ be a degree-1 excited state, we have 
		\begin{equation}\label{est:Eua3}
	E(\vec{u})
	=E(q,0)+\frac{2}{3}\alpha\|\phi\|_{L^{2}}^{2}a^{3}+O\left(\mathcal{N}^{2}+|a|^{\bar{p}+1}\right).
		\end{equation}
	\end{lemma}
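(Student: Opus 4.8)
The plan is to write $E(\vec u)=\tfrac12\|\partial_t u\|_{L^2}^2+\mathcal W(u)$ with $\mathcal W(v)=\tfrac12\int_{\RR^N}\bigl\{|\nabla v|^2+v^2-2F(v)\bigr\}\,\d x$, to discard the kinetic term, and to Taylor-expand $\mathcal W$ around $Q$ to third order. For the kinetic part, the decomposition~\eqref{def:vp} reads $\partial_t u=-\ell\cdot\nabla Q+\sum_{(i,j)\in I_q}\beta_{ij}\Psi_{ij}+b\Phi+\varphi_2$, so that by~\eqref{small} and~\eqref{def:N}, $\|\partial_t u\|_{L^2}\lesssim|\ell|+|\beta|+|b|+\|\varphi_2\|_{L^2}\lesssim\mathcal N$, hence $\tfrac12\|\partial_t u\|_{L^2}^2=O(\mathcal N^2)$. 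Moreover, the change of variables $y=R_\theta(x-z)$ (Jacobian $1$, $R_\theta$ orthogonal) gives $\mathcal W(Q)=\mathcal W(q)=E(q,0)$.

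Next I would set $w=u-Q=V+\varphi_1=a\Phi+\varphi_1$ and expand. Using $-\Delta Q+Q-f(Q)=0$, which makes the linear term vanish and the Hessian of $\mathcal W$ at $Q$ equal to $\q L_Q:=-\Delta+1-f'(Q)$ (the $\q T_{(z,\theta)}$-conjugate of $\q L_q$), together with a third-order Taylor expansion of $F$ around $Q$ (licit since $p>2$ makes $f''$ a $(\bar p-2)$-Hölder function, with remainder controlled pointwise by $|w|^{\bar p+1}$ uniformly in $x$, in the vein of~\eqref{est:tay5}), one obtains
\[
\mathcal W(Q+w)=E(q,0)+\tfrac12\langle\q L_Q w,w\rangle-\tfrac16\int_{\RR^N}f''(Q)\,w^3\,\d x+O\Bigl(\int_{\RR^N}|w|^{\bar p+1}\,\d x\Bigr).
\]
Since $\phi\in\ker\q L_q$, applying $\q T_{(z,\theta)}$ to $-\Delta\phi+\phi-f'(q)\phi=0$ yields $\q L_Q\Phi=0$; hence $\q L_Q V=0$, so $\langle\q L_Q w,w\rangle=\langle\q L_Q\varphi_1,\varphi_1\rangle=O(\|\varphi_1\|_{H^1}^2)=O(\mathcal N^2)$.

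It remains to isolate the cubic term. Expanding $w^3=a^3\Phi^3+3a^2\Phi^2\varphi_1+3a\Phi\varphi_1^2+\varphi_1^3$, the leading contribution is $-\tfrac16 a^3\int_{\RR^N}f''(Q)\Phi^3\,\d x$; by the change of variables $\int f''(Q)\Phi^3=\int f''(q)\phi^3$ and the normalization~\eqref{def:kappa} (which, exactly as in the proof of Lemma~\ref{le:equzab}, amounts to $\int f''(Q)\Phi^3=-4\alpha\|\phi\|_{L^2}^2$), this contribution equals $\tfrac23\alpha\|\phi\|_{L^2}^2\,a^3$. The remaining terms are lower order: since $f''(Q)\Phi^2$ and $f''(Q)\Phi$ are bounded and exponentially localized,
\[
\Bigl|a^2\!\!\int f''(Q)\Phi^2\varphi_1\Bigr|\lesssim a^2\|\varphi_1\|_{L^2}\lesssim a^2\mathcal N\lesssim \mathcal N^2+a^4\lesssim\mathcal N^2+|a|^{\bar p+1},
\]
using AM-GM and $\bar p+1\le4$ (so $a^4\le|a|^{\bar p+1}$ for $|a|\le1$), while $|a\int f''(Q)\Phi\varphi_1^2|\lesssim|a|\,\|\varphi_1\|_{L^2}^2\lesssim\mathcal N^3$ and $|\int f''(Q)\varphi_1^3|\lesssim\|\varphi_1\|_{L^3}^3\lesssim\mathcal N^3$ by Sobolev embedding, both absorbed in $O(\mathcal N^2)$ for $\mathcal N$ small. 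Finally $\int|w|^{\bar p+1}\lesssim|a|^{\bar p+1}\!\int|\Phi|^{\bar p+1}+\int|\varphi_1|^{\bar p+1}\lesssim|a|^{\bar p+1}+\|\varphi_1\|_{H^1}^{\bar p+1}\lesssim|a|^{\bar p+1}+\mathcal N^2$, the embedding $H^1\hookrightarrow L^{\bar p+1}$ being valid for $2\le N\le5$ and $2<p<p^*(N)$. Collecting these estimates yields~\eqref{est:Eua3}.

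The argument is mostly bookkeeping; the points that require care are the treatment of the $a^2\mathcal N$ cross term, where one must invoke $\bar p+1\le4$ to absorb $a^4$ into $|a|^{\bar p+1}$, and — above all — pinning down the coefficient $\tfrac23\alpha\|\phi\|_{L^2}^2$, which rests entirely on the normalization~\eqref{def:kappa} of $\phi$ (equivalently on $\int f''(Q)\Phi^3=-4\alpha\|\phi\|_{L^2}^2$, already obtained in the proof of Lemma~\ref{le:equzab}).
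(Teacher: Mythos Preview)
Your proof is correct and takes essentially the same approach as the paper's: both Taylor-expand the potential energy around $Q$ to third order, use $\q L_Q\Phi=0$ to kill the quadratic cross terms, extract the cubic coefficient via the normalization~\eqref{def:kappa}, and absorb the rest into $O(\mathcal N^2+|a|^{\bar p+1})$; the paper merely writes out the expansion as six explicit pieces $E_1,\dots,E_6$ rather than invoking an abstract Taylor formula for $\mathcal W$. One small imprecision to fix: for $p>3$ the pointwise Taylor remainder is not $\lesssim|w|^{\bar p+1}$ uniformly but rather $\lesssim|w|^4+|w|^{p+1}$ (since $f''$ is only \emph{locally} Lipschitz), exactly as in the paper's bound on $E_6$; this is harmless because $\int|w|^{p+1}\lesssim|a|^{p+1}+\mathcal N^{p+1}$ is still absorbed in $O(\mathcal N^2+|a|^{\bar p+1})$.
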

\begin{proof}
	By~\eqref{def:vp} and an elementary computation, we find
	\begin{equation*}
	\begin{aligned}
	|u|^{2}&=|Q|^{2}+|V|^{2}+|\vp|^{2}+2Q(V+\vp)+2V\vp,\\
	|\nabla u|^{2}&=|\nabla Q|^{2}+|\nabla V|^{2}+|\nabla \vp|^{2}+2\nabla Q\cdot\nabla (V+\vp)+2\nabla V\cdot\nabla \vp,
	\end{aligned}
	\end{equation*}
	and
	\begin{equation*}
	\begin{aligned}
	F(u)
	=&F(Q)+f(Q)(V+\vp)+\frac{1}{2}f'(Q)(V^{2}+\vp^{2}+2V\vp)+\frac{1}{6}f''(Q)V^{3}\\
	&+F(Q+V+\vp)-F(Q)-f(Q)(V+\vp)-\frac{1}{2}f'(Q)(V+\vp)^{2}\\
	&-\frac{1}{6}f''(Q)(V+\vp)^{3}+\frac{1}{6}f''(Q)\left((V+\vp)^{3}-V^{3}\right).
	\end{aligned}
	\end{equation*}
	Based on the above identities and change of variables, we have 
	\begin{equation*}
	E(\vec{u})=E(q,0)-\frac{1}{6}\int_{\RR^{N}}f''(Q)V^{3}\d x+E_{1}+E_{2}+E_{3}+E_{4}+E_{5}+E_{6},
	\end{equation*}
	where
	\begin{equation*}
	\begin{aligned}
	E_{1}&=\frac{1}{2}\int_{\RR^{N}}\left(|\nabla V|^{2}+|V|^{2}-f'(Q)V^{2}\right)\d x,\\
	E_{2}&=\int_{\RR^{N}}\left(\nabla V\cdot\nabla \vp+V\vp-f'(Q)V\vp\right)\d x,\\
	E_{3}
	&=\frac{1}{2}\int_{\RR^{N}}\left(|\nabla \vp|^{2}+|\vp|^{2}-f'(Q)\vp^{2}+(\partial_{t}u)^{2}\right)\d x,\\
	E_{4}&=\int_{\RR^{N}}\left(\nabla Q\cdot\nabla (V+\vp)+Q(V+\vp)-f(Q)(V+\vp)\right)\d x,
	\end{aligned}
	\end{equation*}
	and
	\begin{equation*}
	\begin{aligned}
	E_{5}&=-\frac{1}{3}\int_{\RR^{N}}f''(Q)\left((V+\vp)^{3}-V^{3}\right)\d x,\\
	E_{6}&=-2\int_{\RR^{N}}\bigg(F(Q+V+\vp)-F(Q)-f(Q)(V+\vp)\\
	&\quad \quad \quad \quad \quad \quad 
	-\frac{1}{2}f'(Q)(V+\vp)^{2}-\frac{1}{6}f''(Q)(V+\vp)^{3}\bigg)\d x.
	\end{aligned}
	\end{equation*}
	First, from~\eqref{def:kappa} we have 
	\begin{equation*}
	E'''(\phi,\phi,\phi)=-4\alpha \|\phi\|_{L^{2}}^{2}.
	\end{equation*}
	Therefore, by change of variables,
	\begin{equation*}
	-\frac{1}{6}\int_{\RR^{N}}f''(Q)V^{3}\d x=-\frac{a^{3}}{6}E'''(\phi,\phi,\phi)=\frac{2}{3}\alpha\|\phi\|_{L^{2}}^{2}a^{3}.
	\end{equation*}
	Then, from $-\Delta \Phi+\Phi-f'(Q)\Phi=0$, $V=a\Phi$ and integration by parts,
	\begin{equation*}
	\begin{aligned}
	E_{1}&=\frac{a^{2}}{2}\int_{\RR^{N}}\left(-\Delta\Phi+\Phi-f'(Q)\Phi\right)\Phi\d x=0,\\
	E_{2}&=a\int_{\RR^{N}}\left(-\Delta\Phi+\Phi-f'(Q)\Phi\right)\vp\d x=0.
	\end{aligned}
	\end{equation*}
	By the Sobolev embedding theorem and the AM-GM inequality,
	\begin{equation*}
	\left|E_{3}\right|\lesssim \|\vec{\varphi}\|_{\E}^{2}+|\ell|^{2}+|\beta|^{2}+b^{2}\lesssim \mathcal{N}^{2}.
	\end{equation*}
	Next, from integration by parts and $-\Delta Q+Q-f(Q)=0$, we have 
	\begin{equation*}
	E_{4}=\int_{\RR^{N}}\left(-\Delta Q+Q-f(Q)\right)(V+\vp)\d x=0.
	\end{equation*}
	Last, from the Sobolev embedding Theorem and the AM-GM inequality, we have
	\begin{equation*}
	\begin{aligned}
	\left|E_{5}\right|
	&\lesssim \int_{\RR^{N}}|f''(Q)|\left(|\vp|V^{2}+|V|\vp^{2}+|\vp|^{3}\right)\d x
	\lesssim a^{4}+\mathcal{N}^{2},\\
	\left|E_{6}\right|&\lesssim \int_{\RR^{N}}\left(|V+\vp|^{4}+|V+\vp|^{p+1}\right)\d x\lesssim a^{4}+|a|^{p+1}+\mathcal{N}^{4}+\mathcal{N}^{p+1}.
	\end{aligned}
	\end{equation*} 
	Gathering above estimates, we find~\eqref{est:Eua3}.
	\end{proof}
\subsection{Time evolution analysis}\label{sse:evo}
We introduce new parameters and functionals to analyze the time evolution of solutions in the framework of Proposition~\ref{pro:dec}. It is convenient to consider together all the damped components, as follows:
\begin{align}
\mathcal{S}=|\ell|^{2}+|\beta|^{2}+\sum_{k=1}^{K}(a_{k}^{-})^{2}+b^{2} \quad \text{and} \quad
\mathcal{F}=\mathcal{E}+\mu^{-1}\mathcal{S}.
\end{align}
We also define the unstable component:
\begin{align} \mathcal{A} =\sum_{k=1}^{K}(a^{+}_{k})^{2}. \end{align}
Finally, for the analysis of the main ODE system in the degree-1 degenerate case, we will crucially relie on the following quantities
\begin{align}
\mathcal{R}_{1}&=a+\frac{b}{2\alpha} \quad \text{and}  \quad \mathcal{R}_{2}=\frac{2\alpha}{3}a^{3}+\frac{1}{2\alpha}ab^{2}+a^{2}b.
\end{align}
Indeed, $\q R_1$ and $\q R_2$ enjoy monotonicity related properties as it is made explicit in the following lemma, in which we also 
rewrite the estimates of Lemma~\ref{le:equzab}, ~\ref{le:equexp} and \ref{le:ener1}, using our new notations.

\begin{lemma}\label{le:Lyia} In the context of Proposition~\ref{pro:dec}, there exist $C_{0}>1$ such that the following hold.
	\begin{enumerate}	
		\item \emph{Coercivity and bound.} We have 
		\begin{align} \label{est:coer2}
		C_0^{-1} \q N^2 \le \q F + \mu^{-1} \q A \le C_0 \q N^2.
		\end{align}
		
		\item \emph{Liapunov evolution of $\mathcal{R}_{1}$.} We have 
		\begin{align}
		\frac{\d}{\d t}\mathcal{R}_{1}+\frac{1}{2}a^{2}&\le C_{0}\mathcal{N}^{2},\label{est:R1}\\
		\frac{\d}{\d t}\mathcal{R}_{2}+\alpha a^{4}&\le C_{0}\mathcal{N}^{3},\label{est:R2}\\
		\left|\frac{\d}{\d t}\mathcal{R}_{1}+a^{2}\right|&
		\le C_{0}\left(\mathcal{N}^{2}+|a|\mathcal{N}+|a|^{\bar{p}}\right).\label{est:R1a2}
		\end{align}
		
		\item \emph{Damped evolution}. There exists $\nu_{1}>2\mu$ such that
		\begin{equation}\label{est:dtS}
		\frac{\d}{\d t}\mathcal{S} +\nu_{1}\mathcal{S}\le C_{0}\left(\mathcal{N}^{3}+a^{2}\mathcal{N}\right),
		\end{equation}
		\begin{equation}\label{est:dtF}
		\frac{\d}{\d t}\mathcal{F} +2\mu\mathcal{F}\le C_{0}\left(\mathcal{N}^{3}+a^{2}\mathcal{N}\right).
		\end{equation}
		
		\item \emph{Exponential growth}. There exists $\nu_{2}\ge\nu_{3}>2\mu$ such that 
		\begin{align}
		\frac{\d}{\d t}\mathcal{A}- \nu_{2}\mathcal{A}&\le C_{0}\left(\mathcal{N}^{3}+a^{2}\mathcal{N}\right),\label{est:dA1}\\
		\frac{\d}{\d t}\mathcal{A}- \nu_{3}\mathcal{A}&\ge -C_{0}\left(\mathcal{N}^{3}+a^{2}\mathcal{N}\right)\label{est:dA2}.
		\end{align}

	\end{enumerate}
\end{lemma}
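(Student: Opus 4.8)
The plan is to repackage Lemmas~\ref{le:equzab}, \ref{le:equexp} and \ref{le:ener1} in the new notation, using Young's inequality and the smallness $\mathcal{N}+|a|\lesssim\gamma_{0}\ll1$ to absorb cross terms and remainders; throughout one freely enlarges $C_{0}$ and shrinks $\gamma_{0},\mu$. In the non-degenerate case $a\equiv b\equiv0$, so \eqref{est:R1}--\eqref{est:R1a2} are trivial and the $a^{2}\mathcal{N}$ terms disappear. For \eqref{est:coer2}, I would combine the two-sided bound \eqref{coer:E} on $\mathcal{E}$ with the elementary estimate $|a_{k}^{\pm}|=|\langle\vec{\varphi},\vec{Z}_{k}^{\pm}\rangle|\lesssim\|\vec{\varphi}\|_{\E}$ (valid since each $\Upsilon_{k}$ is exponentially localized and $\|Y_{k}\|_{L^{2}}=1$). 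The upper bound follows by adding $\mathcal{E}\le\mu^{-1}\|\vec{\varphi}\|_{\E}^{2}$, $\mu^{-1}\mathcal{S}\lesssim\mu^{-1}\mathcal{N}^{2}$ and $\mu^{-1}\mathcal{A}\lesssim\mu^{-1}\|\vec{\varphi}\|_{\E}^{2}$; for the lower bound, the negative contribution $-\mu^{-1}\sum_{k}((a_{k}^{+})^{2}+(a_{k}^{-})^{2})$ appearing in \eqref{coer:E} is exactly cancelled by $\mu^{-1}\mathcal{A}+\mu^{-1}\sum_{k}(a_{k}^{-})^{2}\le\mu^{-1}\mathcal{S}$, leaving $\gtrsim\mu\|\vec{\varphi}\|_{\E}^{2}+\mu^{-1}(|\ell|^{2}+|\beta|^{2}+b^{2})\gtrsim\mathcal{N}^{2}$ in view of \eqref{def:N}.

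For part (ii), I would read off from \eqref{est:a} and \eqref{est:ba2} that $\dot a=b+O(\mathcal{N}^{2}+|a|\mathcal{N})$ and $\dot b=-2\alpha b-2\alpha a^{2}+O(\mathcal{N}^{2}+|a|\mathcal{N}+|a|^{\bar{p}})$. Then $\frac{\d}{\d t}\mathcal{R}_{1}=\dot a+\frac{1}{2\alpha}\dot b=-a^{2}+O(\mathcal{N}^{2}+|a|\mathcal{N}+|a|^{\bar{p}})$, where the $b$ terms cancel by construction; this is \eqref{est:R1a2}, and absorbing $|a|\mathcal{N}\lesssim\epsilon a^{2}+\mathcal{N}^{2}$ and $|a|^{\bar{p}}=|a|^{\bar{p}-2}a^{2}\le\epsilon a^{2}$ (using $\bar{p}>2$) gives \eqref{est:R1}. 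For \eqref{est:R2}, substituting the same expansions in $\frac{\d}{\d t}\mathcal{R}_{2}=(\partial_{a}\mathcal{R}_{2})\dot a+(\partial_{b}\mathcal{R}_{2})\dot b$ and simplifying yields $\frac{\d}{\d t}\mathcal{R}_{2}=-2\alpha a^{4}-2a^{3}b+\frac{1}{2\alpha}b^{3}+(\text{remainder})$; one then controls $|2a^{3}b|$ by $\alpha a^{4}$ plus a constant times $b^{4}$ via weighted Young, notes $|b|^{3},b^{4}\le\mathcal{N}^{3}$ since $|b|\le\mathcal{N}\le\gamma_{0}$, and checks that each remainder piece, being a product of one of the quadratic prefactors $a^{2},ab,b^{2}$ with an $O(\mathcal{N}^{2}+|a|\mathcal{N}+|a|^{\bar{p}})$ term, is $\le C_{0}\mathcal{N}^{3}$ after using $|b|\le\mathcal{N}$, $|a|+\mathcal{N}\lesssim\gamma_{0}$, $\bar{p}>2$, and the spare $-\alpha a^{4}$ to absorb the quartic-in-$a$ contributions. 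This last, somewhat lengthy, quartic bookkeeping is the main technical point of the lemma; everything else is straightforward.

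For parts (iii) and (iv), I would differentiate each square and insert the linear part of the relevant ODE: $\dot\ell=-2\alpha\ell+O(\cdots)$, $\dot\beta=-2\alpha\beta+O(\cdots)$ from \eqref{equ:bl}, $\dot b=-2\alpha b-2\alpha a^{2}+O(\cdots)$ from \eqref{est:ba2}, and $\frac{\d}{\d t}a_{k}^{\mp}=\nu_{k}^{\mp}a_{k}^{\mp}+O(\mathcal{N}^{2}+a^{2})$ from \eqref{equ:exp}, with $\nu_{k}^{-}<0<\nu_{k}^{+}$. Summing produces $\frac{\d}{\d t}\mathcal{S}+\nu_{1}\mathcal{S}\le C_{0}(\mathcal{N}^{3}+a^{2}\mathcal{N})$ with $\nu_{1}=\min\{4\alpha,2\min_{k}|\nu_{k}^{-}|\}$: the only cross term, $-4\alpha a^{2}b$ coming from $\frac{\d}{\d t}b^{2}$, has modulus $\le4\alpha a^{2}\mathcal{N}$, and all remainder products are $\lesssim\mathcal{N}^{3}+a^{2}\mathcal{N}$ by Young together with $|\ell|,|\beta|,|b|,|a_{k}^{-}|\lesssim\mathcal{N}$. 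Since $\mu<\alpha$ and $|\nu_{k}^{-}|=\alpha+\sqrt{\alpha^{2}+\lambda_{k}^{2}}>2\alpha$, one has $\nu_{1}\ge4\alpha>2\mu$; this is \eqref{est:dtS}. Then \eqref{est:dtF} follows by adding $\mu^{-1}$ times \eqref{est:dtS} to \eqref{dtE} and discarding the nonnegative term $(\nu_{1}-2\mu)\mathcal{S}$. Finally, for \eqref{est:dA1}--\eqref{est:dA2}, from $\frac{\d}{\d t}a_{k}^{+}=\nu_{k}^{+}a_{k}^{+}+O(\mathcal{N}^{2}+a^{2})$ one gets $\big|\frac{\d}{\d t}(a_{k}^{+})^{2}-2\nu_{k}^{+}(a_{k}^{+})^{2}\big|\lesssim|a_{k}^{+}|(\mathcal{N}^{2}+a^{2})\lesssim\mathcal{N}^{3}+a^{2}\mathcal{N}$, and summing over $k$ with $\nu_{2}=2\max_{k}\nu_{k}^{+}\ge\nu_{3}=2\min_{k}\nu_{k}^{+}>2\mu$ (the last inequality being part of the choice of $\mu$) gives the two-sided estimate. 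As every step only forces $C_{0}$ larger and $\gamma_{0},\mu$ smaller, this closes the proof.
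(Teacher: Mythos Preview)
Your proposal is correct and follows essentially the same approach as the paper: both derive \eqref{est:coer2} from \eqref{coer:E} and $|a_k^{\pm}|\lesssim\|\vec\varphi\|_{\E}$, read off \eqref{est:R1a2} and \eqref{est:R1} directly from \eqref{est:a}--\eqref{est:ba2}, obtain \eqref{est:R2} by expanding $\frac{\d}{\d t}\mathcal R_2$ with these same ODEs and absorbing the cubic/quartic cross terms via Young and the spare $-\alpha a^4$, and deduce (iii)--(iv) from \eqref{equ:bl}, \eqref{est:ba2}, \eqref{equ:exp}, \eqref{dtE}. The only cosmetic difference is that the paper differentiates each monomial $a^3$, $ab^2$, $a^2b$ separately (so that the $b^3$ and $a^3b$ contributions are already hidden in the $O(\mathcal N^3+|a|^3\mathcal N)$ error), whereas you compute $(\partial_a\mathcal R_2)\dot a+(\partial_b\mathcal R_2)\dot b$ and absorb $\frac{1}{2\alpha}b^3$ and $-2a^3b$ afterwards; the bookkeeping is equivalent.
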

\begin{proof}
	Proof of (i). Due to the bound from below in \eqref{coer:E},
\[ \q F + \mu^{-1} \q A \ge \mu \| \vec \varphi \|_{\q H}^2 + \mu^{-1}( |\ell|^2 + |\beta|^2 + b^2) \gtrsim \q N^2. \]
Now $|a^\pm_k| = |\langle \vec \varphi, \vec Z_k^\pm \rangle| \le \| \vec \varphi \|_{\q H} \le \q N$, so that using the bound from above in \eqref{coer:E},
\[ \q F +  \mu^{-1} \q A \lesssim \q E + |\ell |^2 + |\beta|^2 + b^2 + \sum_{k=1}^K (a_k^-)^2 + (a_k^+)^2 \lesssim \q N^2. \]
	
	Proof of (ii). First, from~\eqref{est:a} and~\eqref{est:ba2}, we have 
	\begin{equation*}
	\begin{aligned}
	\frac{\d}{\d t}\mathcal{R}_{1}=\dot{a}+\frac{\dot{b}}{2\alpha}=-a^{2}+O(\mathcal{N}^{2}+|a|\mathcal{N}+|a|^{3}+|a|^{p}).
	\end{aligned}
	\end{equation*}
	Based on~\eqref{small}, the above estimate and the AM-GM inequality, we obtain~\eqref{est:R1} and~\eqref{est:R1a2} for $C_{0}$ large enough. Second, using again~\eqref{est:a},~\eqref{est:ba2}, we compute
	\begin{equation*}
	\begin{aligned}
	\frac{\d}{\d t}a^{3}&=3 a^{2}b+O\left(a^{2}\mathcal{N}^{2}+|a|^{3}\mathcal{N}\right),\quad  \frac{\d}{\d t}\left(ab^{2}\right)
	=-4\alpha ab^{2}+O\left(\mathcal{N}^{3}+|a|^{3}\mathcal{N}\right),\\
	\frac{\d}{\d t}(a^{2}b)&=-2\alpha a^{4}+2ab^{2}-2\alpha a^{2}b+O\left(\mathcal{N}^{4}+|a|^{3}\mathcal{N}+|a|^{5}+|a|^{p+2}\right).
	\end{aligned}
	\end{equation*}
	Combining the above estimates, we infer 
	\begin{equation*}
	\begin{aligned}
	\frac{\d}{\d t}\mathcal{R}_{2}
	&=\frac{2\alpha}{3}\frac{\d}{\d t}a^{3}+\frac{1}{2\alpha}\frac{\d}{\d t}(ab^{2})+\frac{\d}{\d t}(a^{2}b)\\
	&=-2\alpha a^{4}+O\left(\mathcal{N}^{3}+|a|^{3}\mathcal{N}+|a|^{5}+|a|^{p+2}\right),
	\end{aligned}
	\end{equation*}
	Using the AM-GM inequality (to see  $|a|^3 \q N \le \q N^{5/2} + |a|^5$, $a^2 \q N^2 \le \q N^3 + |a|^5$), and as $p+2>4$, this implies~\eqref{est:R2} for $C_{0}$ large enough.

\smallskip
	
	Proof of (iii) and (iv). The estimates~\eqref{est:dtS},~\eqref{est:dtF},~\eqref{est:dA1} and~\eqref{est:dA2} are consequences of~\eqref{equ:bl},~\eqref{est:ba2},~\eqref{equ:exp},~\eqref{dtE} and taking $0<\mu<\min_{k,\pm}(1,\alpha,\nu_{k}^{\pm})$ in Lemma~\ref{le:ener1}.
	\end{proof}

\section{Proof of Theorem~\ref{thm:1} and Theorem~\ref{thm:2}} \label{sec:3}

In this section, we prove Theorem~\ref{thm:1} and Theorem~\ref{thm:2}. First, we prove the following trapping Proposition for packed solution.
\begin{proposition}\label{prop:trapping}
There exists $\delta_0>0$ such that the following holds.
 Let  $\vec{u}=(u,\partial_{t}u)$ be a packed solution of~\eqref{equDKG}, with cluster point $(q,0)$ where $q$ is a non-degenerate state or degree-1 excited state. Due to Proposition \ref{prop:1.1}, for any $0 < \delta < \delta_0$, there exists $T_{\delta}>0$ and $\xi_{\delta}\in \RR^{N}$ such that 
\begin{equation}\label{est:ini}
\|u(T_{\delta})-q(\cdot-\xi_{\delta})\|_{H^{1}}+\|\partial_{t}u(t)\|_{L^{2}}+\bigg[\int_{T_{\delta}}^{\infty}\|\partial_{t}u(t)\|_{L^{2}}^{2}\d t\bigg]^{\frac{1}{2}}<\delta.
\end{equation}
Then $\vec{u}$ admits a decomposition as in Proposition~\ref{pro:dec} for all $t\in [T_{\delta},\infty)$ and satisfies
\begin{equation}\label{est:allt}
\forall t \ge T_\delta, \quad \|u(t)-q(\cdot-\xi_{\delta})\|_{H^{1}}+\|\partial_{t} u(t)\|_{L^{2}}\lesssim \delta^{\frac{1}{2}}.
\end{equation}
\end{proposition}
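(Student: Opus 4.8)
We plan to argue by a continuity (bootstrap) argument built on the modulated decomposition of Proposition~\ref{pro:dec} and the monotonicity estimates collected in Lemma~\ref{le:Lyia}. For the initialization: by Proposition~\ref{prop:1.1}, $\vec u$ is global for positive times, $E(\vec u(t))$ is nonincreasing with $2\alpha\int_t^{+\infty}\|\partial_t u(s)\|_{L^2}^2\,\d s = E(\vec u(t))-E(q,0)\to 0$ as $t\to+\infty$; choosing $T_\delta=t_n$ and $\xi_\delta=y_n$ for $n$ large in \eqref{def:packed} yields \eqref{est:ini}, and in particular $\int_{T_\delta}^{+\infty}\|\partial_t u\|_{L^2}^2\,\d t<\delta^2$.

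Fix a small absolute constant $\rho>0$ (well inside the range of validity of Proposition~\ref{pro:dec}) and let $T^*\in(T_\delta,+\infty]$ be maximal such that $\vec u$ admits the decomposition of Proposition~\ref{pro:dec} on $[T_\delta,T^*)$ with $\mathcal N(t)+|a(t)|+|z(t)-\xi_\delta|\le\rho$ there (recall $|\theta(t)|\lesssim\mathcal N(t)+|a(t)|$ by \eqref{small}). This set is nonempty (it contains a right neighbourhood of $T_\delta$, as $\delta<\delta_0$) and relatively open, and by \eqref{def:vp}, \eqref{equ:orthvpp} and linear independence of $\{\partial_{x_n}Q,\Psi_{ij},\Phi\}$, \eqref{est:ini} also gives $\int_{T_\delta}^{T^*}(|\ell|^2+|\beta|^2+b^2+\|\varphi_2\|_{L^2}^2)\,\d t\lesssim\delta^2$. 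The goal is to show $\mathcal N(t)+|a(t)|+|z(t)-\xi_\delta|\lesssim\delta^{1/2}$ on $[T_\delta,T^*)$: since $\delta^{1/2}\ll\rho$ for $\delta_0$ small, this forces $T^*=+\infty$, and then \eqref{est:allt} follows from $\|u(t)-q(\cdot-\xi_\delta)\|_{H^1}+\|\partial_t u(t)\|_{L^2}\lesssim\mathcal N(t)+|a(t)|+|\theta(t)|+|z(t)-\xi_\delta|$.

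On $[T_\delta,T^*)$ all smallness hypotheses hold, so Lemma~\ref{le:Lyia} applies and the cubic sources obey $\mathcal N^3+a^2\mathcal N\lesssim\rho(\mathcal N^2+a^2)$. Three mechanisms then drive the estimate. (a) \emph{Dissipation:} integrating \eqref{est:dtF} with $\mathcal F(T_\delta)\lesssim\mathcal N(T_\delta)^2\lesssim\delta^2$ controls $\mathcal F$ (hence the whole damped block) by its initial value plus the cubic source. (b) \emph{Neutral mode} (degree-$1$ case, with $a\equiv0$ otherwise): by \eqref{est:R1} the quantity $\mathcal R_1=a+b/(2\alpha)$ is almost nonincreasing, so it stays $\lesssim\delta$ from above; the matching lower bound comes from the energy expansion \eqref{est:Eua3} together with the sign $E(\vec u(t))-E(q,0)=2\alpha\int_t^{+\infty}\|\partial_t u\|_{L^2}^2\ge0$ of \eqref{eq:energy_packed}, which forces $a(t)\ge -C\mathcal N(t)^{2/3}$ — the neutral direction cannot turn negative beyond the noise level — while \eqref{est:R2} and the ensuing control of $\int a^4$ close the coupling between $a$ and $\mathcal N$. (c) \emph{Unstable modes:} if $\mathcal A(t_0)>C_1\big(\mathcal F(t_0)+\mathcal R_1(t_0)^2\big)$ for a large absolute constant $C_1$ at some $t_0\in[T_\delta,T^*)$, then by \eqref{est:dtF}, \eqref{est:R1} and \eqref{est:dA2} this domination persists on $[t_0,T^*)$ and $\mathcal A$ grows at least like $e^{\nu_3(t-t_0)/2}$, ejecting $\vec u$ from a fixed neighbourhood of the orbit $\{q(\cdot-\xi)\}$; by the instability of the exponential directions $\vec u$ then cannot return to that neighbourhood, contradicting that $\vec u$ is packed, i.e.\ \eqref{def:packed}. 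Hence $\mathcal A(t)\le C_1(\mathcal F(t)+\mathcal R_1(t)^2)$ throughout, and so $\mathcal N^2\lesssim\mathcal F+\mathcal R_1^2$ by \eqref{est:coer2}.

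Feeding $\mathcal N^2\lesssim\mathcal F+\mathcal R_1^2$ and $a^2\lesssim\mathcal R_1^2+\mathcal N^2$ back into \eqref{est:dtF}, \eqref{est:R1} and \eqref{est:R2}, absorbing the cubic terms (licit since $\rho$ is small), and using $\mathcal F(T_\delta)\lesssim\delta^2$, $|\mathcal R_1(T_\delta)|+|z(T_\delta)-\xi_\delta|\lesssim\delta$ and $\int_{T_\delta}^{+\infty}\|\partial_t u\|_{L^2}^2<\delta^2$, one obtains $\sup_{[T_\delta,T^*)}(\mathcal N+|a|)\lesssim\delta^{1/2}$ and $\int_{T_\delta}^{T^*}(\mathcal N^2+|a|\mathcal N+|a|^{\bar p})\,\d t\lesssim\delta^{1/2}$; the translation is then controlled by integrating $|\dot z-\ell|\lesssim\mathcal N^2+|a|\mathcal N$ from \eqref{equ:zta} and $|\dot\ell+2\alpha\ell|\lesssim\mathcal N^2+|a|\mathcal N+|a|^{\bar p}$ from \eqref{equ:bl} (the latter giving $\int_{T_\delta}^{T^*}|\ell|\lesssim|\ell(T_\delta)|+\int_{T_\delta}^{T^*}(\mathcal N^2+|a|\mathcal N+|a|^{\bar p})\lesssim\delta^{1/2}$ by Fubini), so that $\sup_{[T_\delta,T^*)}|z(t)-\xi_\delta|\lesssim\delta^{1/2}$; for $\delta_0$ small this is $<\rho$, the continuity argument closes, $T^*=+\infty$, and \eqref{est:allt} holds. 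I expect the hard parts to be mechanism (c), the no-return argument for the exponentially unstable directions, where the packed hypothesis enters in an essential way, and — in the degree-$1$ degenerate case — mechanism (b) together with the closing step: since $a$ is only algebraically (not exponentially) damped and is coupled to $\mathcal N$ through every estimate, a naive Gronwall loop loses smallness, and the book-keeping must be organised around the precise algebra of $\mathcal R_1$, $\mathcal R_2$ and the energy sign condition, which is exactly where the degenerate case departs from the classical one-bump trapping arguments.
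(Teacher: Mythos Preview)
Your overall architecture (bootstrap on the modulated decomposition, energy expansion \eqref{est:Eua3} to pin down $a$, Liapunov control of $\mathcal F$) matches the paper's. The gap is in mechanism (c), the control of the unstable block $\mathcal A$. Your ``no-return'' argument is not justified: once the solution exits the $\rho$-neighbourhood, the modulation and linearization break down, and nothing you have written prevents the nonlinear flow from bringing $\vec u$ back near the orbit of $q$ (the packed hypothesis only gives convergence along a subsequence, so a priori the solution could oscillate in and out). Invoking ``instability of the exponential directions'' outside the modulation regime is circular. You flag this as the hard part, but it is not merely hard: as stated it does not work, and this is precisely the place where a sharper idea is needed.

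The paper's route is entirely different and avoids any no-return reasoning. It runs a \emph{$\delta$-quantitative} bootstrap (with $\mathcal A\le 2C\delta^2$, $\mathcal N\le C\delta$, $|a|\le C\delta^{2/3}$ rather than a fixed $\rho$) and closes the $\mathcal A$ estimate by a local-in-time contradiction: if $\mathcal A$ reaches $2C\delta^2$ at some $t_2$, pick $t_1<t_2$ with $\mathcal A(t_1)=C\delta^2$; by \eqref{est:dA1}--\eqref{est:dA2} one has $t_2-t_1\sim 1$ and hence $\int_{t_1}^{t_2}\mathcal A\gtrsim C\delta^2$. On the other hand, the algebraic identity
\[
a_k^+=\frac{\zeta_k^+}{\zeta_k^-}\,a_k^- +\frac{\zeta_k^--\zeta_k^+}{\zeta_k^-}\,\langle\varphi_2,\Upsilon_k\rangle
\]
gives $\mathcal A\lesssim\mathcal S+\|\varphi_2\|_{L^2}^2$, and since $\|\varphi_2\|_{L^2}^2\lesssim\|\partial_t u\|_{L^2}^2+\mathcal S$ (from \eqref{def:vp}) together with the assumption $\int_{T_\delta}^\infty\|\partial_t u\|_{L^2}^2<\delta^2$ and the bound $\mathcal S\lesssim\delta^2$ (from integrating \eqref{est:dtS}), one gets $\int_{t_1}^{t_2}\mathcal A\lesssim\delta^2$, a contradiction for $C$ large. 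You actually already recorded the key input $\int(|\ell|^2+|\beta|^2+b^2+\|\varphi_2\|_{L^2}^2)\lesssim\delta^2$; the missing idea is to combine it with the relation above for $a_k^+$ and to phrase the bootstrap at scale $\delta$ so that the comparison $C\delta^2$ vs.\ $O(\delta^2)$ becomes decisive.
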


\begin{proof} 
For $\delta < \gamma$ small enough, the existence of $T_{\delta}$ is a direct consequence of Proposition \ref{prop:1.1}: $\vec{u}(T_{\delta})$ admits a decomposition as in Proposition~\ref{pro:dec} and satisfies
	\begin{equation}\label{est:Tdelta}
	 |z(T_{\delta})-\xi_{\delta}|+|\theta(T_{\delta})|+|a(T_{\delta})|+\mathcal{N}(T_{\delta})\lesssim \delta.
	\end{equation}
	
	For a constant $C>1$ to be chosen later, we introduce the following bootstrap estimate
	\begin{equation}\label{Boot}
	|z-\xi_{\delta}|+|\theta|\le \delta^{\frac{1}{2}},\quad |a|\le C\delta^{\frac{2}{3}},\quad \mathcal{N}\le C\delta,\quad  \mathcal{A}\le 2C\delta^{2}.
	\end{equation}
	Set 
	\begin{equation*}
		T^{*}=\sup\left\{t\in[T_{\delta},\infty)\ \text{such that}~\eqref{est:dec}\ \text{and}\ ~\eqref{Boot}~\text{holds on}\ [T_{\delta},t]\right\}.
	\end{equation*}
	We prove thqat $T^{*}=\infty$ by strictly improving the bootstrap assumption~\eqref{Boot} on $[T_{\delta},T^{*})$ (upon chosing $C$ large enough). In the remainder of the proof, the implied constants in $\lesssim$ or $O$ do not depend on $\delta$ nor on the constant $C$  appearing in the bootstrap assumption~\eqref{Boot}. Recall that, in the case when $q$ is a non-degenerate state, we denote $a=b=0$.
	
	\smallskip
	
\emph{Step 1. Preliminary bounds.} 
Let $t \in [T_\delta, T_*)$. Integrating \eqref{est:dA2} on $[T_{\delta},t]$, and using~\eqref{est:Tdelta}, we have 
		\begin{align*}
	\int_{T_{\delta}}^{t}\mathcal{A}(s)\d s
	&\lesssim \mathcal{A}(t)
	+\int_{T_{\delta}}^{t}\mathcal{N}^{3}(s)\d s+\int_{T_{\delta}}^{t}|a|^{3}(s)\d s\\
	&\lesssim C\delta^{2}+C\delta\int^{t}_{T_{\delta}}\mathcal{N}^{2}(s)\d s
	+C\delta^{\frac{2}{3}}\int_{T_{\delta}}^{t}a^{2}(s)\d s,
	\end{align*}
Similarly, integrating\eqref{est:dtF}, we get
		\begin{align*}
	\int_{T_{\delta}}^{t}\mathcal{F}(s)\d s
	&\lesssim \left|\mathcal{F}(t)\right|+\left|\mathcal{F}(T_{\delta})\right|
	+\int_{T_{\delta}}^{t}\mathcal{N}^{3}(s)\d s+\int_{T_{\delta}}^{t}|a|^{3}(s)\d s\\
	&\lesssim C^{2}\delta^{2}+C\delta\int^{t}_{T_{\delta}}\mathcal{N}^{2}(s)\d s
	+C\delta^{\frac{2}{3}}\int_{T_{\delta}}^{t}a^{2}(s)\d s,
	\end{align*}
	and integrating~\eqref{est:R1}, there hold
    	\begin{equation*}
    \frac{1}{2}\int_{T_{\delta}}^{t}a^{2}(s)\d s-C_{0}\int_{T_{\delta}}^{t}\mathcal{N}^{2}(s)\d s
     \le R_{1}(T_{\delta})-R_{1}(t)\lesssim C\delta^{\frac{2}{3}}+C\delta.
    \end{equation*}
 Now recall that due to the coercivity \eqref{est:coer2}, $\q F + \mu^{-1}\q A \ge C_0^{-1} \q N^2$, hence
\begin{align*}
\MoveEqLeft\int_{T_{\delta}}^{t}\left(\mathcal{N}^{2}(s)+a^{2}(s)\right)\d s \\
& \le \int_{T_{\delta}}^{t}a^{2}(s)\d s-C_{0}\int_{T_{\delta}}^{t}\mathcal{N}^{2}(s)\d s + C_0(C_0+1) \int_{T_{\delta}}^{t}(\q F(s)+\mu^{-1}\mathcal{A}(s)) \d s \\
& \lesssim C\delta^{\frac{2}{3}}+C\delta+C^{2}\delta^{2} +C\delta\int^{t}_{T_{\delta}}\mathcal{N}^{2}(s)\d s
	+C\delta^{\frac{2}{3}}\int_{T_{\delta}}^{t}a^{2}(s)\d s.
\end{align*}
Taking $0<\delta\ll 1$ small enough, we infer
\begin{align*}
\int_{T_{\delta}}^{t}\left(\mathcal{N}^{2}(s)+a^{2}(s)\right)\d s \lesssim  C\delta^{\frac{2}{3}}+C\delta+C^{2}\delta^{2}\lesssim C\delta^{\frac{2}{3}}+C^{2}\delta^{2}.
\end{align*}

\emph{Step 2. Estimate on $z$ and $\theta$.} By~\eqref{equ:zta} and~\eqref{equ:bl}, we have 
\begin{align*}
\MoveEqLeft |z(t)-\xi_{\delta}|+|\theta(t)| \lesssim \left|z(t)-\xi_{\delta}+\frac{\ell(t)}{2\alpha}\right|+\left|\theta(t)+\frac{\ell(t)}{2\alpha}\right|+|\ell(t)|+|\beta(t)|\\
	&\lesssim \mathcal{N}(t)+\mathcal{N}(T_{\delta})+|z(T_{\delta})-\xi_{\delta}|+|\theta(T_{\delta})|+\int_{T_{\delta}}^{t}\left(\mathcal{N}^{2}(s)+a^{2}(s)\right)\d s\\
	& \lesssim	 C\delta+ C\delta^{\frac{2}{3}}+C^{2}\delta^{2},
\end{align*}	
which strictly improves the estimate~\eqref{Boot} of $z$ and $\theta$ on $[T_{\delta},T^{*})$ for $\delta$ small enough.
	
\emph{Step 3. Estimate on $a$.} Due to the energy dissipation \eqref{eq:energy_packed} and the initial assumption \eqref{est:ini}, note that we have 
	\begin{equation*}
	E(q,0)\le E(\vec{u}(t))\le E(q,0)+O(\delta^{2}).
	\end{equation*}
	Using the expansion of the energy~\eqref{est:Eua3} (and the bootstrap bounds~\eqref{Boot}), we deduce that 
	\begin{equation*}
	\begin{aligned}
	|a|^{3} & \lesssim \delta^{2}+\mathcal{N}^{2}+a^{4}+|a|^{p+1} \\
	& \lesssim C^{2}\delta^{2}+C^{\bar{p}+1}\delta^{\frac{2}{3}(\bar{p}+1)}.
	\end{aligned}
	\end{equation*}
	It follows that
	\begin{equation*}
	|a| \lesssim \delta^{\frac{2}{3}}+ C^{\frac{2}{3}}\delta^{\frac{2}{3}}+C^{\frac{\bar{p}+1}{3}}\delta^{\frac{2}{9}(\bar{p}+1)}.
	\end{equation*}
	which strictly improves the estimate~\eqref{Boot} of $a$ on $[T_{\delta},T^{*})$ for $C$ large enough and $\delta$ small enough.
	
\emph{Step 4. Estimate on $\mathcal{N}$}. Under the bootstrap assumption \eqref{Boot}, estimate ~\eqref{est:dtF} writes 
\begin{equation*}
\begin{aligned}
	\frac{\d}{\d t}\mathcal{F}+2\mu\mathcal{F}
&\lesssim C^{3}\delta^{\frac{7}{3}}+C^{3}\delta^{3}\lesssim C^{3}\delta^{\frac{7}{3}}.
		\end{aligned}
	\end{equation*}
Let $t \in [T_{\delta},T^{*})$. Integrating the above estimate on $[T_{\delta},t]$, using also the initial time assumption~\eqref{est:Tdelta}, yields
	\begin{equation}\label{est:F}
	\begin{aligned}
	\mathcal{F}(t)&\lesssim e^{-2\mu (t-T_{\delta})}\mathcal{F}(T_{\delta})+C^{3}\int_{T_{\delta}}^{t}e^{-2\mu(t-s)}
	\delta^{\frac{7}{3}}\d s\lesssim \delta^{2}+C^{3}\delta^{\frac{7}{3}}.
	\end{aligned}
	\end{equation}
We now use once more the coercivity \eqref{est:coer2}, the~\eqref{Boot} on $\q A$ so that the estimate \eqref{est:F} above gives, for $C$ large enough
	\begin{equation*}
	\begin{aligned}
		\mathcal{N}^{2}(t)
		&\lesssim \mathcal{F}+\mathcal{A}\lesssim \delta^{2}+C^{3}\delta^{\frac{7}{3}}+C\delta^{2}
		\le 2C\delta^{2}.
	\end{aligned}
	\end{equation*}
	This strictly improves the estimate~\eqref{Boot} of $\mathcal{N}$ on $[T_{\delta},T^{*})$.
	
	\emph{Step 5. Estimate on $\mathcal{A}$}. Let us first rewrite the estimates~\eqref{est:dtS},~\eqref{est:dA1},~\eqref{est:dA2} in the context of the bootstrap assumption~\eqref{Boot}: for all $t\in [T_{\delta},T_{*})$, 
	\begin{align}
	\frac{\d }{\d t}\mathcal{S}+\nu_{1}\mathcal{S}&\le C_{0} \left(C^{3}\delta^{\frac{7}{3}}+C^{3}\delta^{3}\right)\le C_{0}C^{3}\delta^{\frac{7}{3}},\label{est:dtSS}\\
	\frac{\d }{\d t}\mathcal{A}-\nu_{2}\mathcal{A}&\le C_{0} \left(C^{3}\delta^{\frac{7}{3}}+C^{3}\delta^{3}\right)\le C_{0}C^{3}\delta^{\frac{7}{3}},\label{est:dtA1}\\
	\frac{\d }{\d t}\mathcal{A}-\nu_{3}\mathcal{A}&\ge -C_{0}\left(C^{3}\delta^{\frac{7}{3}}+C^{3}\delta^{3}\right)\ge  -C_{0}C^{3}\delta^{\frac{7}{3}}.\label{est:dtA2}
	\end{align} 
	Then for $t\in [T_{\delta},T^{*})$, we have (integrating~\eqref{est:dtSS} on $[T_{\delta},t]$)
	\begin{equation}\label{est:S}
	\q S(t)\lesssim e^{-\nu_{1}(t-T_{\delta})}S(T_{\delta})+C^{3}\int_{T_{\delta}}^{t}e^{-2\mu(t-s)}
	\delta^{\frac{7}{3}}\d s\lesssim \delta^{2}+C^{3}\delta^{\frac{7}{3}}.
	\end{equation}
	We now prove by contradiction that for $C$ large enough, it holds
	\begin{equation}\label{est:A}
	\forall t\in [T_{\delta},T_{*}),\quad \mathcal{A}(t)< 2C\delta^{2}.
	\end{equation}
	For the sake of contradiction, assume that there exists $t_{2}\in[T_{\delta},T^{*})$ such that 
	\begin{equation*}
		\mathcal{A}(t_{2})=2C\delta^{2}\quad \text{and for all } t \in [T_{\delta},t_{2}), \quad 
		\mathcal{A}(t)<2C\delta^{2}.
	\end{equation*}
	On the one hand, by continuity of $\mathcal{A}$, there exists $t_{1}\in[T_{\delta},t_{2}]$ such that 
	\begin{equation}\label{est:A1A2}
		\mathcal{A}(t_{1})=C\delta^{2}\quad \text{and for all } t \in (t_{1},t_{2}), \quad 
		C\delta^{2}<\mathcal{A}(t)<2C\delta^{2}.
	\end{equation}
	For $t \in [t_1,t_2]$, divide \eqref{est:dtA1} and \eqref{est:dtA2} by $\q A$ (using~\eqref{est:A1A2}), and then integrate on $[t_1,t_2]$: it follows that 
	\begin{equation}\label{est:t2-t1}
	\frac{\log 2}{\nu_{2}}+O(C^{2}\delta^{\frac{1}{3}})\le t_{2}-t_{1}\le 
	\frac{\log 2}{\nu_{3}}+O(C^{2}\delta^{\frac{1}{3}}).
	\end{equation}
	Therefore, using \eqref{est:A1A2} again
	\begin{equation}\label{est:inta1}
	\int_{t_{1}}^{t_{2}}\mathcal{A}(t)\d t\ge C\delta^{2}(t_{2}-t_{1})\ge C\delta^{2}\left(\frac{\log 2}{\nu_{2}}+O(C^{2}\delta^{\frac{1}{3}})\right).
	\end{equation}
	On the other hand, by the definition of $\varphi_2$ in ~\eqref{def:vp} and the bound on $\q S$ ~\eqref{est:S}, for any $t\in[t_{1},t_{2}]$
	\begin{equation*}
		\|\vpp(t)\|_{L^{2}}^{2}\lesssim \|\partial_{t}u(t)\|^{2}_{L^{2}}+\mathcal{S}(t)\lesssim 
		\|\partial_{t}u(t)\|^{2}_{L^{2}}+\delta^{2}+C^{3}\delta^{\frac{7}{3}}.
	\end{equation*}
	Thus, from~\eqref{est:ini} and~\eqref{est:t2-t1},
	\begin{equation}\label{est:vpp}
	\int_{t_{1}}^{t_{2}}\|\vpp(t)\|_{L^{2}}^{2}\d t\lesssim \int_{t_{1}}^{t_{2}}\left(\|\partial_{t}u(t)\|^{2}_{L^{2}}+\delta^{2}+C^{3}\delta^{\frac{7}{3}}\right)\d t
	\lesssim \delta^{2}+C^{3}\delta^{\frac{7}{3}}.
	\end{equation}
	By the definition of $a_{k}^{\pm}$, we have
	\begin{equation*}
		a_{k}^{+}=\zeta_{k}^{+}\langle \vp, \Upsilon_{k}\rangle +\langle \vpp,\Upsilon_{k}\rangle,\quad 
		a_{k}^{-}=\zeta_{k}^{-}\langle \vp, \Upsilon_{k}\rangle +\langle \vpp,\Upsilon_{k}\rangle
	\end{equation*}
	and thus for all $k=1,\cdots,K$
	\begin{equation*}
		a_{k}^{+}=\frac{\zeta^{+}_{k}}{\zeta^{-}_{k}}a_{k}^{-}+\frac{\zeta^{-}_{k}-\zeta^{+}_{k}}{\zeta^{-}_{k}}\langle \vpp,\Upsilon_{k}\rangle,
	\end{equation*}
	from where we see that $\q A \lesssim \q S + \| \varphi_2 \|_{L^2}^2$. Gathering estimates~\eqref{est:S} and~\eqref{est:vpp}, we find
	\begin{equation*}
		\int_{t_{1}}^{t_{2}}\mathcal{A}(t)\d t\lesssim \int_{t_{1}}^{t_{2}}\mathcal{S}(t)\d t+\int_{t_{1}}^{t_{2}}\|\vpp(t)\|_{L^{2}}^{2}\d t\lesssim \delta^{2}+C^{3}\delta^{\frac{7}{3}},
	\end{equation*}
	which is a contradiction with~\eqref{est:inta1} for $C$ large enough and $\delta$ small enough. This proves \eqref{est:A}, and this strictly improves the estimate on $\q A$
	
	\emph{Step 6. Conclusion}. As a consequence of improving the bootstrap assumption \eqref{Boot} on $z -\xi_\delta$, $\theta$, $\mathcal{N}$ and $\mathcal{A}$, we conclude that $T^{*}=\infty$. 
	
	Finally, from~\eqref{Boot}, we know that, for all $t\in[T_{\delta},+\infty)$,
	\begin{equation*}
	\begin{aligned}
	&\|u(t)-q(\cdot-\xi_{\delta})\|_{H^{1}}+\|\partial_{t}u(t)\|_{L^{2}}\\
	&\lesssim \|u(t)-Q(t)\|_{H^{1}}+\|Q(t)-q(\cdot-\xi_{\delta})\|_{H^{1}}+\|\partial_{t}u(t)\|_{L^{2}}\\
	&\lesssim \mathcal{N}(t)+|a(t)|+|z(t)-\xi_{\delta}|+|\theta(t)|
	\lesssim \delta^{\frac{1}{2}}+C\delta^{\frac{2}{3}}+C\delta,
	\end{aligned}
	\end{equation*}
	which implies~\eqref{est:allt} for $\delta$ small enough. The proof of Proposition~\ref{prop:trapping} is complete.
\end{proof}

\begin{remark}
Observe that the proof of Proposition \ref{prop:trapping} actually proves that 
\[ \theta(t), a(t), \q N(t), \q A(t) \to 0 \quad \text{as} \quad t \to +\infty, \]
a fact that we will use in the proofs of Theorem \ref{thm:1} and \ref{thm:2}.
\end{remark}

\begin{proof}[End of the proof of Theorem~\ref{thm:1}] We assume here that  $q$ is a  non-degenerate bound state, and prove exponential convergence. Recall that, in this context, we have set $a=b=0$.
	
	First, from Proposition~\ref{prop:trapping}, we know that, for any $0<\delta<\delta_{0}^{2}$, there exists $T^{*}_{\delta}\gg 1$ and $\xi_{\delta}\in \RR^{N}$ such that, $\vec{u}$ admits a decomposition as in Proposition~\ref{pro:dec} for all $t\in [T_{\delta}^{*},\infty)$ and satisfies
	\begin{equation}\label{est:Td}
	\|u(t)-q(\cdot-\xi_{\delta})\|_{H^{1}}+\| \partial_{t}u(t)\|_{L^{2}}+|z(t)-\xi_{\delta}|+|\theta(t)|+\mathcal{N}(t)<\delta.
	\end{equation}

\emph{Step 1.} We claim that
	\begin{equation}\label{est:dtFA}
	\forall t \ge T_\delta^*, \quad \frac{\d}{\d t}\left(\mathcal{F}(t)+\mu^{-1}\mathcal{A}(t)\right)\le -\mu \left(\mathcal{F}(t)+\mu^{-1}\mathcal{A}(t)\right).
	\end{equation}
	This will implies exponential decay of $\mathcal{F}+\mu^{-1}\mathcal{A}$. So as to prove the above differential inequality, it is convenient to introduce the auxiliary quantity $\tilde{\mathcal{A}}:=\mathcal{A}-\delta^{\frac{1}{2}}\mathcal{F}$. 
	First note that, from~\eqref{est:coer2} and~\eqref{est:Td},
	\begin{equation}\label{est:N3}
	\mathcal{N}^{3}(t)\le \delta \mathcal{N}^{2}(t)\le \delta C_{0} \left(\mathcal{F}+\mu^{-1}\mathcal{A}\right).
	\end{equation}
	As a consequence,
	\begin{equation*}
	\tilde{\mathcal{A}}=(1+\delta^{\frac{1}{2}}\mu^{-1})\mathcal{A}-\delta^{\frac{1}{2}}\left(\mathcal{F}+\mu^{-1}\mathcal{A}\right)\le (1+\delta^{\frac{1}{2}}\mu^{-1})\mathcal{A}.
	\end{equation*}
Therefore, using the evolution equations ~\eqref{est:dtF} and~\eqref{est:dA2} for $\q F$ and $\q A$,  and in view of~\eqref{est:N3},
\begin{equation*}
\begin{aligned}
\frac{\d}{\d t}\tilde{\mathcal{A}}
&=\frac{\d}{\d t}\mathcal{A}
-\delta^{\frac{1}{2}}\frac{\d}{\d t}\mathcal{F} \ge \left(\nu_{3}-2\delta^{\frac{1}{2}}\right)\mathcal{A}+2\mu\delta^{\frac{1}{2}}\left(\mathcal{F}+\mu^{-1}\mathcal{A}\right)-(1+\delta^{\frac{1}{2}})C_{0}\mathcal{N}^{3}\\
&\ge \left(\nu_{3}-2\delta^{\frac{1}{2}}\right)\mathcal{A}+
\left(2\mu\delta^{\frac{1}{2}}-\delta C_{0}^{2}(1+\delta^{\frac{1}{2}})\right)\left(\mathcal{F}+\mu^{-1}\mathcal{A}\right)\ge 
\frac{1}{2}\nu_{3}\tilde{\mathcal{A}},
\end{aligned}
\end{equation*}
where $\delta>0$ is so small that
\begin{equation*}
\left(\nu_{3}-2\delta^{\frac{1}{2}}\right)\ge \frac{1}{2}\nu_{3}(1+\delta^{\frac{1}{2}}\mu^{-1}),\quad 
2\mu\delta^{\frac{1}{2}}-\delta\mu^{-1} C_{0}(1+\delta^{\frac{1}{2}})\ge 0.
\end{equation*}
Integrating on $[t,s]\subseteq [T^{*}_{\delta},\infty)$, we get
\begin{equation*}
\tilde{\q A}(t)\le e^{\frac{1}{2}\nu_{3}(t-s)}\tilde{\q A}(s).
\end{equation*}
We now take  the limit $s\to\infty$ and using that $\tilde {\q A}(s)$ is bounded due to~\eqref{est:Td}, we obtain that $\tilde {\q A}(t) \le 0$, that is
\begin{equation}\label{est:A-F}
\forall t\in [T^{*}_{\delta},\infty),\quad \mathcal{A}(t)\le \delta^{\frac{1}{2}}\mathcal{F}(t)\le \delta^{\frac{1}{2}}\left(\mathcal{F}(t)+\mu^{-1}\mathcal{A}(t)\right).
\end{equation}
Combining~\eqref{est:dtF},~\eqref{est:dA1},~\eqref{est:N3} and~\eqref{est:A-F}, we have 
\begin{equation*}
\begin{aligned}
\frac{\d}{\d t}\left(\mathcal{F}+\mu^{-1}\mathcal{A}\right)&=\frac{\d}{\d t}\mathcal{F}+\mu^{-1}\frac{\d}{\d t}\mathcal{A}\\
&\le -2\mu(\mathcal{F}+\mu^{-1}\mathcal{A})+(\mu^{-1}\nu_{2}+2)\mathcal{A}+C_{0}(1+\mu^{-1})\mathcal{N}^{3}\\
&\le(-2\mu +\delta^{\frac{1}{2}}(\mu^{-1}\nu_{2}+2)+\delta C^{2}_{0}(1+\mu^{-1})) \left(\mathcal{F}+\mu^{-1}\mathcal{A}\right),
\end{aligned}
\end{equation*}
which implies~\eqref{est:dtFA} for $\delta$ small enough.

\emph{Step 2.} Integrating~\eqref{est:dtFA} on $[T^{*}_{\delta},t]$ and using~\eqref{est:Td}, we have
\begin{equation*}
\left(\mathcal{F}+\mu^{-1}\mathcal{A}\right)(t)\le e^{-\mu(t-T^{*}_{\delta})}\left(\mathcal{F}+\mu^{-1}\mathcal{A}\right)\left(T^{*}_{\delta}\right)\lesssim \delta^{2}e^{-\mu (t-T^{*}_{\delta})}.
\end{equation*}
Recall once again \eqref{est:coer2}: it follows that 
\begin{equation}\label{est:N}
\mathcal{N}^{2}(t)\lesssim (\mathcal{F}+\mu^{-1}\mathcal{A})(t)\lesssim e^{-\mu t}.
\end{equation}
 Now we can integrate \eqref{equ:zta}, using \eqref{est:N} and recalling that $a=0$: this proves that there exists $z_{\infty}\in \RR^{N}$ such that $z(t) \to z_\infty$ as $t \to +\infty$, and
\begin{equation}\label{est:zta}
\forall t\ge T_{\delta},\quad |z(t)-z_{\infty}|+|\theta(t)|\lesssim \int_{t}^{\infty}\mathcal{N}(s)^2 \d s\lesssim e^{-\mu t}.
\end{equation}
Gathering estimates~\eqref{est:N} and~\eqref{est:zta}, we obtain
\begin{align*}
\MoveEqLeft \|u(t)-q(\cdot-z_{\infty})\|_{H^{1}}+\|\partial_{t} u(t)\|_{L^{2}} \\
&\lesssim \|u(t)-Q(t)\|_{H^{1}}+\|Q(t)-q(\cdot-z_{\infty})\|_{H^{1}}+\|\partial_{t} u(t)\|_{L^{2}}\\
&\lesssim \mathcal{N}(t)+|z(t)-z_{\infty}|+|\theta(t)|\lesssim e^{-\mu t}.
\end{align*}

The proof of Theorem~\ref{thm:1} is complete.
	\end{proof}

\begin{proof}[End of the proof of Theorem~\ref{thm:2}]
	We here assume that $q$ is a degree-1 excited state, and prove algebraic convergence of $\vec u$ to $(q,0)$.
	
	As earlier, we know from Proposition~\ref{prop:trapping}, that for any $0<\delta<\delta_{0}^{2}$, there exists $T^{*}_{\delta}\gg 1$ (we can choose $T_\delta^* = T_{\delta^2/2}$) and $\xi_{\delta}\in \RR^{N}$ such that, $\vec{u}$ admits a decomposition as in Proposition~\ref{pro:dec} for all $t\in [T_{\delta}^{*},\infty)$ and satisfies
	\begin{equation}\label{est:Tdexci}
	\|u(t)-q(\cdot-\xi_{\delta})\|_{H^{1}}+\| \partial_{t}u(t)\|_{L^{2}}+|z(t)-\xi_{\delta}|+|a(t)|+|\theta(t)|+\mathcal{N}(t)<\delta.
	\end{equation}
	
	\emph{Step 1.} We claim that there exists $C_1 \ge 1$ (independent of $\delta$) such that 
	\begin{equation}\label{est:AFa}
	\forall t \ge T_{\delta}^{*}, \quad \mathcal{A}(t) \le C_1 \delta^{\frac{1}{4}} (a(t) ^{2}+ \mathcal{N}(t)^{2}).
	\end{equation}
	Again, we consider an auxiliary quantity, which is here $\tilde{\mathcal{A}}:=\mathcal{A}-\delta^{\frac{1}{4}}\mathcal{F}-\delta^{-\frac{1}{2}}\mathcal{R}_{2}$. From the evolution equations~\eqref{est:R2},~\eqref{est:dtF} and~\eqref{est:dA2} of $\q A$, $\q F$ and $\q R_2$, we bound
	\begin{align*}
	\frac{\d}{\d t}\tilde{\mathcal{A}}
	& = \frac{\d}{\d t}\mathcal{A}-\delta^{\frac{1}{4}}\frac{\d}{\d t}\mathcal{F}-\delta^{-\frac{1}{2}}\frac{\d}{\d t}\mathcal{R}_{2}\\
	& \ge 2\mu \delta^{\frac{1}{4}}\left(\mathcal{F}+\mu^{-1}\mathcal{A}\right)+(\nu_{3}-2\delta^{\frac{1}{4}})\mathcal{A}+\delta^{-\frac{1}{2}}\alpha a^{4}\\
	&\qquad -C_{0}(1+\delta^{-\frac{1}{2}}+\delta^{\frac{1}{4}})\mathcal{N}^{3} -C_{0}\left(1+\delta^{\frac{1}{4}}\right)a^{2}\mathcal{N}.\\
	& \ge 2\mu^{2}\delta^{\frac{1}{4}}\mathcal{N}^{2} +(\nu_{3}-2\delta^{\frac{1}{4}})\mathcal{A}+\delta^{-\frac{1}{2}}\alpha a^{4}
	-3C_{0}\delta^{-\frac{1}{2}}\mathcal{N}^{3}-2C_{0}a^{2}\mathcal{N}.
	\end{align*}
	Notice that from the AM-GM inequality, we have 
	\begin{equation}\label{est:a2N}
	2a^{2}\mathcal{N}=2(a^{2}\delta^{-\frac{3}{16}})(\mathcal{N}\delta^{\frac{3}{16}})\le a^{4}\delta^{-\frac{3}{8}}+\mathcal{N}^{2}\delta^{\frac{3}{8}},
	\end{equation}
	so that, rearranging the terms in the preceding inequality, we get
	\begin{equation*}
	\frac{\d}{\d t}\tilde{\mathcal{A}}\ge 
	\left(2\mu^{2}\delta^{\frac{1}{4}}-3C_{0}\delta^{\frac{1}{2}}-C_{0}\delta^{\frac{3}{8}}\right)
	\mathcal{N}^{2}+(\nu_{2}-2\delta^{\frac{1}{4}})\mathcal{A}+(\delta^{-\frac{1}{2}}\alpha-\delta^{-\frac{3}{8}}C_{0})a^{4}\ge 0,
	\end{equation*}
	where $\delta$ is so small that
	\begin{equation*}
	2\mu^{2}\delta^{\frac{1}{4}}-3C_{0}\delta^{\frac{1}{2}}-C_{0}\delta^{\frac{3}{8}}>0,\quad 
	\nu_{2}-2\delta^{\frac{1}{4}}>0\quad \text{and}\quad 
	\delta^{-\frac{1}{2}}\alpha-\delta^{-\frac{3}{8}}C_{0}>0.
	\end{equation*}
	Now, from Proposition~\ref{prop:trapping}, we know that $\tilde{\mathcal{A}}(t)\to 0$ as $t\to \infty$. As we just showed that $\tilde{\mathcal{A}}$ is non decreasing on $[T_{\delta}^{*},\infty)$, we hence conclude that for all $t \ge T_\delta^*$, $\tilde{\mathcal{A}}(t)\le 0$, or equivalently (due to the definition of $\tilde{\q A}$), that for all $t \ge T_\delta^*$,
	\begin{align*}
 \mathcal{A}(t)&\le \delta^{\frac{1}{4}}\mathcal{F}(t)+\delta^{-\frac{1}{2}}\mathcal{R}_{2}(t)\\
&\le \delta^{\frac{1}{4}}\mathcal{F}(t)+\delta^{\frac{1}{2}}\left(\frac{2\alpha}{3}+1\right)a^{2}+\frac{\delta^{\frac{1}{2}}}{2\alpha}\left(a^{2}+\mathcal{N}^{2}\right),
\end{align*}
As $\q F \lesssim \q N$, this implies~\eqref{est:AFa} for $\delta$ small enough.
	
	\smallskip
	
	\emph{Step 2.} We claim that there exists $C_{2}\ge 1$ (independent of $\delta$) such that,
	\begin{equation}\label{est:FAR1}
	\forall t \ge T_\delta^*, \quad |a(t)|\le C_{2}\left( (t-T_\delta^*+\delta^{-1})^{-1} +\mathcal{N}(t)\right).
	\end{equation}
	We use yet another auxiliary quantity, namely $\widetilde{\mathcal{R}}_{1}:=\mathcal{R}_{1}+2C_{0}^2\mu^{-1}\left(\mathcal{F}+\mu^{-1}\mathcal{A}\right)$.
	Note that, from the AM-GM inequality and~\eqref{est:Tdexci}, there exists $C_{2}\ge 1$ such that 
	\begin{equation}\label{est:R12}
	\widetilde{\mathcal{R}}_{1}^{2}\le 2\mathcal{R}_{1}^{2}+O\left(\mathcal{N}^{4}\right)\le C_{2}(a^{2}+\mathcal{N}^{2}).
	\end{equation}
	Furthermore, using the evolution equation \eqref{est:dtF} and \eqref{est:dA1} for $\q F$ and $\q A$, we have

	\begin{align*}
	\frac{\d}{\d t}\left(\mathcal{F}+\mu^{-1}\mathcal{A}\right) &\le -2\mu\left(\mathcal{F}+\mu^{-1}\mathcal{A}\right)+(\nu_{3}+2)\mathcal{A}
	+C_{0}\left(1+\mu^{-1}\right)\left(\mathcal{N}^{3}+a^{2}\mathcal{N}\right)\\
	 &\le -\mu \left(\mathcal{F}+\mu^{-1}\mathcal{A}\right)+ O(\delta^{\frac{1}{4}} (a^{2}+\mathcal{N}^{2}))
	\end{align*}
	where we also used \eqref{est:Tdexci} and \eqref{est:AFa}. Therefore, using the evolution equation \eqref{est:R1} for $\q R_1$, we deduce that,
	\begin{equation*}
	\begin{aligned}
	\frac{\d}{\d t}\widetilde{\q R}_{1}
	&=\frac{\d}{\d t}\mathcal{R}_{1}+2C_{0}^2\mu^{-1}\frac{\d}{\d t}\left(\mathcal{F}+\mu^{-1}\mathcal{A}\right)\\
	&\le -\frac{1}{2}a^{2}+C_{0}\mathcal{N}^{2}+2C_{0}^2\mu^{-1}\left(-\mu (\mathcal{F}+\mu^{-1}\mathcal{A})\right)+ O(\delta^{\frac{1}{4}}(a^{2}+\mathcal{N}^{2}))\\
	&\le -\frac{1}{2}a^{2}-C_{0}\mathcal{N}^{2}+O(\delta^{\frac{1}{4}}(a^{2}+\mathcal{N}^{2})).
	\end{aligned}
	\end{equation*}
	(We used the coercivity estimate \eqref{est:coer2}:  $\q F + \mu^{-1}\q A \ge C_0^{-1} \q N$, and also that $|b| \lesssim \q N$). For $\delta$ small enough, we infer that
	\[ \frac{\d}{\d t}\widetilde{\q R}_{1} \le -\frac{1}{4}a^{2}- \frac{C_{0}}{2} \mathcal{N}^{2} \le - c_2 \tilde {\q R}_1^2. \]
	for some universal constant $c_2>0$. Recall now that  from Proposition~\ref{prop:trapping}, $\tilde {\q R}_1(t) \to 0$ as $t \to +\infty$, and therefore, after dividing by $\tilde{\q R}_1^2 $, and integrating on $[T_\delta^*,t]$, we obtain
	\[ \forall t \ge T_\delta^*, \quad  0 \le \tilde {\q R}_1(t)\lesssim (t-T_\delta^*+\delta^{-1})^{-1}. \]
(Notice that $\tilde{\q R}_1(T_\delta^*) \lesssim \delta$, in view of \eqref{est:Tdexci}). It can be rewritten as
	\begin{equation*}
	-2C_{0}\mu^{-2}\left(\mathcal{F}(t)+\mu^{-1}\mathcal{A}(t)\right)\le a(t)+\frac{b(t)}{2\alpha}
	\lesssim  (t-T_\delta^*+\delta^{-1})^{-1}.
	\end{equation*}
	As $|b|, \mathcal{F}+\mu^{-1}\mathcal{A} \lesssim \q N$, this implies~\eqref{est:FAR1} for $C_{2}$ large enough.
	
	\bigskip
	
	\emph{Step 3. Conclusion.} Now we prove the algebraic decay rate by a bootstrap argument. For $C_3 \ge 1$ to be chosen later, we introduce the following bootstrap bounds
	\begin{equation}\label{est:Boot}
	|a(t)| \le \frac{C_{3}^{2}}{(t-T_\delta^*+\delta^{-1})} ,\quad \mathcal{N}(t)\le \frac{C_{3}}{(t-T_\delta^*+\delta^{-1})}. 
	\end{equation}
	Let 
	\begin{equation*}
	T^{**}=\sup\left\{ t\in [T_{\delta}^*,\infty)\  \text{such that}~\eqref{est:Boot}\ \text{holds on}\ [T_{\delta}^*,t]\right\}.
	\end{equation*}
	We will prove that $T^{**} =+\infty$ by strictly improving the bootstrap assumption~\eqref{est:Boot} on $[T_{\delta}^*,T^{**})$, upon choosing $\delta$ small enough. In this bootstrap, the implied constants do not depend on $\delta$ and $C_{3}$, but can depend on $C_0,C_1,C_2$.
	
	As $|a(T_\delta^*)| \le \delta$ and $\mathcal{N}(T_\delta^*) \le \delta$ due to \eqref{est:Tdexci}, the bootstrap estimate~\eqref{est:Boot} holds (strictly) at $t = T_\delta^*$, and so $T^{**} >T_\delta^*$.
	
	\emph{Estimate on $a$.} From~\eqref{est:FAR1} and the boostrap estimate~\eqref{est:Boot} on $\q N$, we have
	\begin{equation*}
	|a(t)|\le C_2 \left( \frac{1}{t-T_\delta^*+\delta^{-1}} + \q N(t) \right) \le \frac{C_2(C_{3}+1)}{(t-T_\delta^*+\delta^{-1})},
	\end{equation*}
	which strictly improves the estimate~\eqref{est:Boot} on $a$ for taking $C_{3}$ large enough (depended on $C_{2}$).
	
	\emph{Estimate on $\mathcal{N}$.} First, we claim that
	\begin{equation}\label{est:FT0}
	\forall t \in  [T_{\delta}^{*},T^{**}), \quad \mathcal{F}(t) \lesssim \frac{1}{(t-T_\delta^*+\delta^{-1})^2}.
	\end{equation}
	Indeed, using~\eqref{est:Tdexci} and the bootstrap bound~\eqref{est:Boot}, the evolution equation ~\eqref{est:dtF} on $\q F$ writes
	\begin{equation*}
	\forall t \in [T_\delta^{*},T^{**}), \quad \frac{\d}{\d t}\mathcal{F}+2\mu \mathcal{F}\le C_{0} \q N (a^2 + \q N^2) \lesssim  \frac{\delta C_{3}^{2}(C_{3}^{2}+1)}{(t-T_\delta^*+\delta^{-1})^2}.
	\end{equation*}
	Let $t\in [T_{\delta}^{*},T^{**})$, and integrate on $[T_\delta^{*},t]$: for $\delta$ small enough (dependent on $C_{3}$), this gives
	\begin{align*}
	\mathcal{F}(t) & \lesssim e^{-2\mu (t-T_\delta^*)} \mathcal{F}(T_{\delta}^*)+ \delta  \int_{T_{\delta}^{*}}^{t} e^{-2\mu(t-s)}  \frac{ C_{3}^{2}(C_{3}^{2}+1)\d s}{(s-T_\delta^*+\delta^{-1})^2} \\
	&  \lesssim \delta^2 e^{-2\mu (t-T_{\delta}^*)} + \frac{\delta C_{3}^{2}(C_{3}^{2}+1)}{(t-T_\delta^*+\delta^{-1})^2}\lesssim \frac{1}{(t-T_\delta^*+\delta^{-1})^2},
	\end{align*}
	which means~\eqref{est:FT0}. Therefore,  from the coercivity estimate~\eqref{est:coer2}, combining the bound~\eqref{est:AFa} and~\eqref{est:FT0} on $\q F$ and $\q A$ that we just obtained and the boostrap bounds \eqref{est:Boot}, we get
\begin{align*}
\q N^2 & \lesssim \q F + \mu^{-1}\mathcal{A}\lesssim  \frac{1}{(t-T_\delta^*+\delta^{-1})^{2}} +  \frac{\delta^{\frac{1}{4}}C^{2}_{3}(C_{3}^{2}+1)}{(t-T_\delta^*+\delta^{-1})^{2}} \lesssim \frac{1}{(t-T_\delta^*+\delta^{-1})^{2}},
\end{align*}
which strictly improves the estimate~\eqref{est:Boot} of $\mathcal{N}$, upon $\delta$ small enough.

As a consequence of improving the bootstrap assumption on $a$ and $\mathcal{N}$, we conclude that $T^{**}=\infty$. 

Finally, it suffices to bound the geometric parameters. First, recall the equation \eqref{equ:bl} on $\ell$ and $\beta$: proceeding as for $\q F$ (using \eqref{est:Boot} for $t \ge T_\delta^*$), we get that
\[ \forall t \ge T_\delta^*, \quad |\ell(t)| + |\beta(t)| \lesssim \frac{1}{(t-T_\delta^*+1)^2}. \]
Then we consider $z$ and $\theta$: using the above estimate and \eqref{est:Boot}, the equation \eqref{equ:zta} now writes
\begin{align*}
|\dot z| & \le |\dot z - \ell| + |\ell| \lesssim |\ell| + \q N^2 + |a| \q N \lesssim  \frac{1}{(t-T_\delta^*+1)^2}, \\
|\dot \theta| & \le |\dot \theta - \beta | + |\beta| \lesssim |\beta| + \q N^2 + |a| \q N \lesssim  \frac{1}{(t-T_\delta^*+1)^2}.
\end{align*}
This proves that $z(t) \to z_\infty$ as $t \to +\infty$, and that
\[ \forall t \ge T_\delta^*, \quad \left|z(t)-z_{\infty}\right| \lesssim \frac{1}{t-T_\delta^*+1}. \]
From Proposition ~\ref{prop:trapping}, we already know that $\theta(t) \to 0$ as $t \to +\infty$, and we obtain as for $z$ the convergence rate
\[ \forall t \ge T_\delta^*, \quad \left|\theta(t)\right| \lesssim \frac{1}{t-T_\delta^*+1}. \]
Finally, gathering the above estimates, we conclude that for all $t\ge T_{0}$,
	\begin{align*}
	\MoveEqLeft \|u(t)-q(\cdot-z_{\infty})\|_{H^{1}}+\|\partial_{t}u(t)\|_{L^{2}}\\
	&\le \|u(t)-Q(t)\|_{H^{1}}+\|Q(t)-q(\cdot-z_{\infty})\|_{H^{1}}+\|\partial_{t} u(t)\|_{L^{2}} \\
	&\lesssim \mathcal{N}(t)+|a(t)|+|z(t)-z_{\infty}|+|\theta(t)|\lesssim \frac{1}{t-T_\delta^*+1}.
	\end{align*}
	The proof of Theorem~\ref{thm:2} is complete.
	\end{proof}

\section{Proof of Theorem~\ref{thm:3}}
In this section, we prove Theorem~\ref{thm:3}. Let $q$ be a degree-1 excited state.

\begin{proof}[Proof of Theorem~\ref{thm:3}] Let $0<\delta\ll 1$ to be chosen later. 
	Given ${\boldsymbol{\mathfrak a}}^{+}=(\mathfrak a_{k}^{+})_{k=1,\cdots,K}\in \bar{\mathcal{B}}_{\RR^{K}}(\delta^{\frac{3}{2}})$, we consider the solution $\vec{u}$ of~\eqref{equDKG} with initial data
	\begin{equation}\label{def:ini}
	\vec{u}(0)=(q,0)+(\delta\phi,0)+\vec{W}(\mathfrak a_{k}^{+}),
	\end{equation}
	where
	\begin{equation}\label{def:W}
	\vec{W}(\boldsymbol{\mathfrak a}^{+})=\sum_{k=1}^{K} \frac{\mathfrak a_{k}^{+}}{(\zeta_{k}^+)^{2}+1} \begin{pmatrix} \zeta_{k}^{+}Y_{k} \\ Y_{k}\end{pmatrix}.	\end{equation}
	\emph{Step 1. Decomposition.} For any $t\ge 0$ such that $\vec{u}$ is defined and satisfies~\eqref{est:dec}, we
	consider its decomposition according to Proposition~\ref{pro:dec}: this gives the functions $z, \ell, \theta, \beta, a,b$ and $\vec \varphi$. Note that by the definition of $\vec{W}(\boldsymbol{\mathfrak a}^{+})$, the initial data $\vec{u}(0)$ is modulated in the sense that
	\begin{equation}\label{equ:ini1}
	\vec{\varphi}(0)=\vec{W}(\ba),\ z(0)=\ell(0)=0,\ 
	\theta(0)=\beta(0)=0,\ b(0)=0,\ a(0)=\delta.
	\end{equation}
	Also, by.~\eqref{equ:Yk}, and as $\vec Z_k^\pm(0) = \begin{pmatrix} \zeta_k^\pm Y_k \\ Y_k \end{pmatrix}$, we have
	\begin{equation}\label{equ:ini2}
	\forall k =1, \dots, K, \quad a_{k}^{+}(0)=\langle \vec{W}(\boldsymbol{a}), \vec Z_k(0) \rangle= \mathfrak a_{k}^{+}.
	\end{equation}
	 We introduce the following bootstrap setting,
	 \begin{equation}\label{boot}
	 \left\{\begin{aligned}
	 &\left|a(t)-(t+\delta^{-1})^{-1}\right|\le (t+\delta^{-1})^{-\frac{8}{7}}+(t+\delta^{-1})^{-\frac{\bar{p}}{2}},\\
	 &|\theta(t)|\le \delta^{\frac{1}{2}},\quad \mathcal{N}(t)\le (t+\delta^{-1})^{-\frac{5}{4}},\quad 
	 \mathcal{A}(t)\le ( t+\delta^{-1})^{-3}.
	 \end{aligned}
	 \right.
	 \end{equation}
Let $T_{*} = T_*({\boldsymbol{\mathfrak a}}^{+})$ be the supremum of times $t \ge 0$ such that $\vec u$ is defined on $[0,t]$, satisfies the assumption \eqref{est:dec} of Proposition~\ref{pro:dec} on $[0,t]$, and such that the bootstrap estimates \eqref{boot} hold on $[0,t]$.
	
	Our goal is to prove that there exists at least one choice of ${\boldsymbol{\mathfrak a}}^{+} \in \bar{\mathcal{B}}_{\RR^{K}}(\delta^{\frac{3}{2}})$ such that  $T_{*}(\mathfrak a^+)=\infty$. For this, we start by closing all bootstrap estimates except the one for the instable modes, $\q A$, given any ${\boldsymbol{\mathfrak a}}^{+} \in \bar{\mathcal{B}}_{\RR^{K}}(\delta^{\frac{3}{2}})$. Then we prove the existence of suitable parameters $\boldsymbol{a}^{+}=(a_{k}^{+})_{k=1,\cdots,K}$ for which $\q A$ is controlled, using a topological argument. Before we proceed with the actual proof, let us emphasize a major difference between the argument here and the previous boostraps. In Theorem~\ref{thm:3} the goal is \emph{to construct} a solution, and our only choice is a good guess for the initial data as in \eqref{def:ini}; hence, we do not have any long time a priori knowledge, in particular, we have no way to ensure a priori that $\partial_t u \in L^2([0,+\infty),L^2)$ (a bound which plays a key role in controlling the unstable modes): it will be consequence of our construction.
	
	In the remainder of the proof, the implied constants in $\lesssim $ or $O$ do not depend on the small parameter $\delta>0$ which appears in the definition of the initial data \eqref{def:ini} and in the bootstrap assumptions~\eqref{boot}, nor on ${\boldsymbol{\mathfrak a}}^{+} \in \bar{\mathcal{B}}_{\RR^{K}}(\delta^{\frac{3}{2}})$.
	
	\smallskip
	
	\emph{Step 2. Closing the estimates in \eqref{boot} except for $\q A$.}
	
	Fix ${\boldsymbol{\mathfrak a}}^{+} \in \bar{\mathcal{B}}_{\RR^{K}}(\delta^{\frac{3}{2}})$. For simplicity of notations, we drop any references to $\boldsymbol{\mathfrak a}^{+}$ in this step.
	
	\emph{Estimate of $a$.} From~\eqref{boot} and the definition of $\mathcal{R}_{1}$, we have, for any $t \in [0,T_*)$,
	\begin{equation}\label{est:R12a2}
	\left|\mathcal{R}_{1}(t)^2-a(t)^2\right|\lesssim |a(t)|\mathcal{N}(t)+\mathcal{N}(t)^{2}\lesssim (t+\delta^{-1})^{-\frac{9}{4}}.
	\end{equation}
	Also, 
		\begin{equation}\label{est:R1-t} 
	\left|\mathcal{R}_{1}(t)-(t+\delta^{-1})^{-1}\right|\le (t+\delta^{-1})^{-\frac{8}{7}}+(t+\delta^{-1})^{-\frac{\bar{p}}{2}} + (t+\delta^{-1})^{-\frac{5}{4}},
	\end{equation}
	and so $\q R_1(t) >0$ if $\delta>0$ is small enough. By~\eqref{est:R1a2},~\eqref{boot} and~\eqref{est:R12a2},
	\begin{equation*}
	\begin{aligned}
	\left|\frac{\d }{\d t}\mathcal{R}_{1}+\mathcal{R}_{1}^{2}\right|
	&\lesssim \left|\mathcal{R}_{1}^{2}(t)-a^{2}(t)\right|+\mathcal{N}^{2}(t)+|a(t)|\mathcal{N}(t)+|a(t)|^{\bar{p}}\\
	&\lesssim (t+\delta^{-1})^{-\frac{9}{4}}+(t+\delta^{-1})^{-\bar{p}}.
	\end{aligned}
	\end{equation*}
	Therefore, from~\eqref{est:R1-t}, we have 
	\begin{equation*}
	\left|\frac{\d}{\d t}\left(\mathcal{R}_{1}^{-1}\right)(t)-1\right|\lesssim (t+\delta^{-1})^{-\frac{1}{4}}+(t+\delta^{-1})^{-\bar{p}+2}.
	\end{equation*}
	Integrating above estimate on $[0,t]$, and using~\eqref{equ:ini1}, we obtain
	\begin{equation*}
	\left|\mathcal{R}_{1}^{-1}(t)-(t+\delta^{-1})\right|\lesssim (t+\delta^{-1})^{\frac{3}{4}}+(t+\delta^{-1})^{-\bar{p}+3},
	\end{equation*}
	so that in particular $\q R_1(t)^{-1} \ge \frac{1}{2} (t+\delta^{-1})$ (for $\delta$ small enough) and
	\[ 
	\left| \q R_1(t) - \frac{1}{t+\delta^{-1}} \right|  = \frac{\left|\mathcal{R}_{1}^{-1}(t)-(t+\delta^{-1})\right|}{\q R_1(t)^{-1} (t+\delta^{-1})} \lesssim (t+\delta^{-1})^{-\frac{5}{4}}+(t+\delta^{-1})^{-\bar{p}+1}.	
	\]
	From there, we infer
	\begin{align*}
	\left|a(t)-(t+\delta^{-1})^{-1}\right|
	&\lesssim \left|\mathcal{R}_{1}(t)-(t+\delta^{-1})^{-1}\right|+\mathcal{N}(t)\\
	&\lesssim (t+\delta^{-1})^{-\frac{5}{4}}+(t+\delta^{-1})^{-\bar{p}+1},
	\end{align*}
	which strictly improves the estimate~\eqref{boot} of $a$ for $\delta$ small enough (recall that $\bar p >2$).
		
	\emph{Estimate of $\theta$.} We use the evolution equations~\eqref{equ:zta} and \eqref{equ:bl} for $\theta$ and $\beta$, under the bootstrap assumption \eqref{boot}: this gives, for all $t\in [0,T_{*})$,
	\begin{equation*}
	\left|\frac{\d}{\d t}\left(\theta+\frac{\beta}{2\alpha}\right)(t)\right|\lesssim \mathcal{N}^{2}(t)+a^{2}(t)\lesssim (t+\delta^{-1})^{-2}.
	\end{equation*}
	Fix $t \in [0,T_{*})$ and integrate the above estimate on $[0,t]$, using~\eqref{equ:ini1} and~\eqref{boot}: it yields
	\begin{equation*}
	\begin{aligned}
	\left|\theta(t)\right|
	&\lesssim \mathcal{N}(t)+ \left|\theta(t)+\frac{\beta(t)}{2\alpha}\right| \lesssim  (t+\delta^{-1})^{-\frac{5}{4}}+ \int_{0}^{t}(s+\delta^{-1})^{-2}\d s\lesssim \delta^{\frac{5}{4}}+\delta\lesssim \delta,
	\end{aligned}
	\end{equation*}
	which strictly improves the estimate~\eqref{boot} of $\theta$, for $\delta$ small enough.
	
	\emph{Estimate of $\mathcal{N}$.} We first derive a bound on $\q F$ using its evolution equation~\eqref{est:dtF}: together with the boostrap assumption \eqref{boot}, we have 
	\begin{equation*}
	\frac{\d}{\d t}\mathcal{F}+2\mu \mathcal{F}\le C_{0}\left((t+\delta^{-1})^{-\frac{15}{4}}+2(t+\delta^{-1})^{-\frac{13}{4}}\right) \lesssim (t+\delta^{-1})^{-\frac{13}{4}}.
	\end{equation*}
	Fix $t \in [0,T_{*})$ and integrate this on $[0,t]$ and using the initial bounds \eqref{def:W} and \eqref{equ:ini1} to infer that
	\begin{equation*}
	\begin{aligned}
	\mathcal{F}(t)
	&\lesssim e^{-2\mu t}\mathcal{F}(0)+e^{-2\mu t}\int_{0}^{t}e^{2\mu s}(t+\delta^{-1})^{-\frac{13}{4}}\d s\\
	& \lesssim \delta^{3}e^{-2\mu t}+(t+\delta^{-1})^{-\frac{13}{4}}\lesssim (t+\delta^{-1})^{-3}.
	\end{aligned}
	\end{equation*}
	To get a bound on $\q N$, we now recall the coercivity bound \eqref{est:coer2} again and the bootstrap assumption \eqref{boot} on $\q A$, and conclude that
	\begin{equation*}
	 \mathcal{N}^{2}(t)\lesssim \mathcal{F}(t)+\mu^{-1}\mathcal{A}(t)\lesssim (t+\delta^{-1})^{-3},
	\end{equation*}
	which strictly improves the boostrap estimate~\eqref{boot} of $\mathcal{N}$ for $\delta$ small enough.
	
	\smallskip
	
	\emph{Step 3. Transversality and choice of ${\boldsymbol{\mathfrak a}}^{+}$.} 
	Observe that for any time $t$ where the bootstrap bounds \eqref{boot} holds together with the equality $\mathcal{A}(t)=( t+\delta^{-1})^{-3}$, the evolution equation \eqref{est:dA2} on $\q A$ yields the following transversality property:
	\begin{align}
	\MoveEqLeft\frac{\d }{\d t}\left((t +\delta^{-1})^{3}\mathcal{A}(t)\right) \ge\left(\nu_{3}(t+\delta^{-1})^{3}+3( t+\delta^{-1})^{2}\right)\mathcal{A}(t) \nonumber\\
	&\quad -C_{0}(t+\delta^{-1})^{3}\left(\mathcal{N}^{3}(t)+a^{2}(t)\mathcal{N}(t)\right) \nonumber\\
	&\ge \nu_{3}+O\left((t+\delta^{-1})^{-\frac{1}{4}}+(t+\delta^{-1})^{-\frac{3}{4}}+(t+\delta^{-1})^{-1}\right)\ge \frac{1}{2}\nu_{3}, \label{est:transverse}
	\end{align}
for $\delta>0$ small enough. Estimate \eqref{est:transverse} is enough to justify the existence of at least
$K$-uple $\mathfrak{a}^{+}\in \bar{B}_{\mathbb{R}^{K}}(\delta^{\frac{3}{2}})$ such that $T_{*}(\mathfrak a^+)=\infty$.

The proof is by contradiction: assume for its sake that for all $\mathfrak{a}^{+}\in \bar{\mathcal{B}}_{\mathbb{R}^{K}}(\delta^{\frac{3}{2}})$, it holds $T_{*}(\mathfrak a^+)<\infty$. Then, a contradiction follows from the following discussion (see for instance more details in~\cite{CMM} and~\cite[Section 3.1]{CMkg}).

\emph{Continuity of the map $\mathfrak a^+ \mapsto T_{*}(\mathfrak a^+)$.} 

Let $\mathfrak{a}^{+}\in \bar{\mathcal{B}}_{\mathbb{R}^{K}}(\delta^{\frac{3}{2}})$: as $T_{*}(\mathfrak a^+) <+\infty$, then as we improved all other estimates in \eqref{boot} in the previous step, necessarily, the equality $\mathcal{A}(t)=( t+\delta^{-1})^{-3}$ holds for $t = T_{*}(\mathfrak a^+)$, and so \eqref{est:transverse} holds at $t=T_{*}(\mathfrak a^+)$.

By continuity of the flow of \eqref{equDKG}  (and of the modulation technique), the above transversality property implies that the map
\begin{equation*}
\bar{\mathcal{B}}_{\mathbb{R}^{K}}(\delta^{\frac{3}{2}}) \to [0,+\infty), \quad \mathfrak a^+ \mapsto T_{*}(\mathfrak a^+)
\end{equation*}
is continuous and there is instantaneous exit for initial data on the boundary:
\begin{equation*}
T_{*}(\mathfrak a^+) =0\quad \text{for all } \mathfrak a^+\in \mathcal{S}_{\mathbb{R}^{K}}\left(\delta^{\frac{3}{2}}\right).
\end{equation*}

\emph{Construction of a retraction}. As a consequence, the map giving the exit point (on the boundary)
\begin{equation*}
\begin{aligned}
\mathcal{M}:\  \bar{\mathcal{B}}_{\mathbb{R}^{K}}(\delta^{\frac{3}{2}})&\to  {\mathcal{S}}_{\mathbb{R}^{K}}(\delta^{\frac{3}{2}})\\
\mathfrak a^+ &\mapsto \delta^{\frac{3}{2}}\left(T_{*}(\mathfrak a^+)+\delta^{-1}\right)^{\frac{3}{2}} \boldsymbol{a}^{+}\left(T_{*}(\mathfrak a^+)\right)
\end{aligned}
\end{equation*}
is well defined, continuous, and moreover, the restriction of $\mathcal{M}$ to
${\mathcal{S}}_{\mathbb{R}^{K}}(\delta^{\frac{3}{2}})$ is the identity. 

The existence of such a map $\q M$ contradicts Brouwer's no retraction theorem for continuous maps from the ball to
the sphere. We conclude that there exists at least one  $\mathfrak{a}^{+}\in \bar{\mathcal{B}}_{\mathbb{R}^{K}}(\delta^{\frac{3}{2}})$ such that $T_{*}(\mathfrak a^+)=\infty$.

\smallskip

\emph{Step 4. Conclusion.}
At this point, we have proved the existence of $\boldsymbol{\mathfrak a}^{+}\in {\mathcal{B}}_{\mathbb{R}^{K}}(\delta^{\frac{3}{2}})$, associated with a global solution $\vec{u} \in \mathscr C([0,+\infty), H^1 \times L^2)$ of~\eqref{equDKG} with initial data defined in~\eqref{def:ini}, which also can be modulated (in the sense of~\eqref{est:dec}) and satisfies~\eqref{boot} for all $t\in [0,\infty)$. Let us now control the convergence of $z$ and $\theta$: using their evolution equations~\eqref{est:zta} (and \eqref{equ:bl}) under the~\eqref{boot} regime, we see that there exist $z(t) \to z_{\infty}$ and $\theta(t) \to \theta_{\infty}$ as $t \to +\infty$ and moreover
\begin{align}
\MoveEqLeft |z(t)-z^{\infty}|+|\theta(t)-\theta^{\infty}| \le \left|z(t)+\frac{\ell}{2\alpha}(t)-z_{\infty}\right|+\left|\theta(t)+\frac{\beta}{2\alpha}(t)-\theta_{\infty}\right|+ \frac{1}{2\alpha} \mathcal{N}(t) \nonumber \\
& \le \frac{1}{2\alpha} \mathcal{N}(t)+ \int_t^{\infty} \left( \left| \dot z + \frac{\dot \ell}{2 \alpha} \right| +   \left| \dot \theta + \frac{\dot \beta}{2 \alpha} \right| \right) ds \nonumber \\
&\lesssim \mathcal{N}(t)+\int_{t}^{\infty}\left(\mathcal{N}^{2}(s)+|a(s)|\mathcal{N}(s)+|a|^{\bar{p}}(s)\right)\d s \nonumber \\
&\lesssim (t+\delta^{-1})^{-\frac{5}{4}}+(t+\delta^{-1})^{-\frac{3}{2}}+(t+\delta^{-1})^{-\bar{p}+1}. \label{est:zthe}
\end{align}
We now use the invariance of the equation to get a solution to \eqref{equDKG} where $z_\infty=0$ and $\theta_\infty=0$ by setting
\begin{equation*}
\vec{u}_{*}(t,x)=\vec{u}(t,R^{-1}_{\theta_{\infty}}(x+z_{\infty}))\quad \text{for all} \quad (t,x)\in [0,\infty)\times \mathbb{R}^{N}.
\end{equation*}
Then, from~\eqref{boot} and \eqref{est:zthe}, $\vec{u}_{*} \in \mathscr C([0,+\infty), H^1 \times L^2)$ is a solution of~\eqref{equDKG} which enjoys requested properties in the conclusion of Theorem~\ref{thm:3}, and more precisely,
\[ \forall t \ge 0, \quad \| \vec{u}_{*}(t) - (q,0) - (t+\delta^{-1})^{-1} (\phi,0) \|_{H^1 \times L^2}  \lesssim  (t+\delta^{-1})^{-\frac{5}{4}} + (t+\delta^{-1})^{-\bar{p}+1}. \]
The proof of Theorem~\ref{thm:3} is complete.
\end{proof}

\end{document}